\documentclass[final,onefignum,onetabnum,letterpaper]{siamonline250211}
\usepackage[scale=0.8]{geometry} 

\usepackage{soul}
\usepackage{lipsum}
\usepackage{amsfonts}
\usepackage{epstopdf}
\ifpdf
  \DeclareGraphicsExtensions{.eps,.pdf,.png,.jpg}
\else
  \DeclareGraphicsExtensions{.eps}
\fi

\usepackage{datetime}
\newdateformat{monthyeardate}{%
	\monthname[\THEMONTH] \THEDAY, \THEYEAR}

\usepackage{academicons}
\usepackage{xcolor}
\renewcommand{\orcid}[1]{\href{https://orcid.org/#1}{\textcolor[HTML]{A6CE39}{orcid.org/#1}}}

\usepackage{amsmath}
\allowdisplaybreaks
\usepackage{amssymb}
\usepackage{commath}
\usepackage{mathtools}
\usepackage{bbm}
\usepackage{bm}

\usepackage{color}
\usepackage{graphicx}
\usepackage[small]{caption}
\usepackage{subcaption}

\usepackage{relsize}
\usepackage{adjustbox}
\usepackage{algorithm}
\usepackage[noend]{algpseudocode}
\usepackage{booktabs}
\usepackage{tikz}
\usepackage{cancel}
\usepackage{pifont}
\usetikzlibrary{bayesnet} 

\usepackage{enumitem}
\setlist[enumerate]{leftmargin=.5in}
\setlist[itemize]{leftmargin=.5in}

\newsiamremark{remark}{Remark}
\newsiamremark{example}{Example}
\newsiamremark{hypothesis}{Hypothesis}
\crefname{hypothesis}{Hypothesis}{Hypotheses}
\newsiamthm{claim}{Claim}

\headers{Flux differencing for FSBP operators}{H.\,M.\ Haase, A.\,R.\ Winters, and J.\ Glaubitz}

\title{Summation-by-parts operators for general function spaces: Flux differencing, entropy stability, and accuracy\thanks{\monthyeardate\today 
\corresponding{Jan Glaubitz} 
}}
\author{ 
Henry M.\ Haase\thanks{
Department of Mathematics, Link\"oping University, Sweden
(\email{henry.haase@liu.se}, \orcid{0009-0008-4065-0426}; 
\email{andrew.ross.winters@liu.se}, \orcid{0000-0002-5902-1522}; 
\email{jan.glaubitz@liu.se}, \orcid{0000-0002-3434-5563})
} 
\and 
Andrew R.\ Winters\footnotemark[2]
\and
Jan Glaubitz\footnotemark[2]
}

\usepackage{amsopn}

\usepackage{comment}

\definecolor{blue1}{RGB}{0,76,153}
\definecolor{green1}{RGB}{1,109,57}
\definecolor{red1}{RGB}{153,0,0}
\definecolor{orange1}{RGB}{255,127,80}

\usepackage[normalem]{ulem}

\usepackage[normalem]{ulem}

\newcommand{\vol}{\textbf{VOL}}
\newcommand{\fvol}{f^{\text{vol}}}

\DeclareMathOperator{\diag}{diag}

\newcommand{\scp}[2]{\left\langle{#1,\, #2}\right\rangle} 
\renewcommand{\d}{\mathrm{d}} 
\newcommand{\intd}{\, \mathrm{d}}

\newcommand{\R}{\mathbb{R}}

\usepackage{lineno}
\usepackage{comment}

\ifpdf
\hypersetup{
	pdftitle={Flux differencing with FSBP operators},
 	pdfauthor={Haase, Winters, and Glaubitz}
}

\fi

\begin{document}

\maketitle

\begin{abstract}
Summation-by-parts (SBP) operators are a cornerstone of provably stable high-order methods for hyperbolic conservation laws.
For nonlinear problems, the flux-differencing framework of Fisher and Carpenter combines such SBP operators with entropy-conservative two-point fluxes to obtain semi-discretizations that are conservative in the sense of Lax--Wendroff and discretely entropy-conservative or stable.
So far, this framework has been developed exclusively for operators that are exact for polynomials.
Function-space SBP (FSBP) operators lift this restriction by being exact on a general finite-dimensional function space that can be tailored to the problem at hand.
In this work, we extend flux differencing to FSBP operators for general (potentially non-polynomial) function spaces.
We prove that, whenever the operator has a diagonal norm and its function space contains the constants, the resulting semi-discretization is conservative in the sense of Lax--Wendroff, semi-discretely entropy-conservative within a single element, and entropy-stable across element interfaces. 
We also provide the first theoretical accuracy analysis of FSBP operators:
We characterize their design order through an algebraic criterion on the underlying function space, prove that suitable non-polynomial spaces achieve the same design order as polynomial spaces of equal dimension, and show that the flux-differencing semi-discretization retains this order.
Notably, the entropy-conservative two-point flux is reused unchanged; only the underlying function space differs from the polynomial case.
Numerical experiments for the Burgers' equation and the compressible Euler equations in one and two dimensions confirm these properties.
\end{abstract}

\begin{keywords}
	FSBP operators, 
	flux differencing, 
	conservation, 
	entropy stability,
	design order,
	hyperbolic conservation laws
\end{keywords}

\begin{AMS}
	65D25, 
	65M12, 
	65M60, 
	35L65, 
	65M70 
\end{AMS}

\begin{Code}
    \url{https://github.com/hmhaase/paper-2026-fsbp-flux-differencing}
\end{Code}

\begin{DOI}
    Not yet assigned
\end{DOI}

\section{Introduction}
\label{sec:intro}

Nonlinear hyperbolic conservation laws model many processes in the physical sciences.
A defining feature of such equations is the interplay of nonlinear wave steepening and the absence of any inherent dissipation mechanism.
As a result, solutions can develop discontinuities in finite time, even for smooth initial data \cite{riemann1860fortpflanzung}.
Weak solutions are then no longer unique, and an additional admissibility criterion---an entropy condition---is required to single out the physically relevant one \cite{lax1960systems,dafermos2016hyperbolic}.
\emph{Entropy-stable} schemes satisfy a discrete counterpart of this entropy condition \cite{godlewski1996numerical,leveque2002finite,hesthaven2017numerical,oeffner2023approximation} and thereby serve a dual purpose:
They mimic the admissibility criterion at discontinuities, and they equip the discretization with a nonlinear stability estimate---essential because, in the absence of inherent dissipation, numerical errors are transported rather than damped.
Entropy stability has accordingly proven crucial for the robustness of high-order methods \cite{tadmor1987numerical,gassner2016split,chen2020review}, which are otherwise prone to nonlinear instabilities for under-resolved solutions, even when these are smooth.

\subsection*{SBP operators}

SBP operators are among the most successful building blocks for systematically constructing provably stable high-order semi-discretizations of initial-boundary-value problems \cite{svard2014review,delreyfernandez2014review}.
Originally developed for finite difference methods \cite{kreiss1974finite,strand1994summation}, they have since been realized in, or extended to, finite volume \cite{nordstrom2001finite}, continuous Galerkin \cite{hicken2016multidimensional,hicken2020entropy,abgrall2020analysisI}, discontinuous Galerkin \cite{gassner2013skew,carpenter2014entropy,chan2018discretely}, radial basis function \cite{glaubitz2024energy},
flux reconstruction \cite{ranocha2016summation,ranocha2018stability,offner2018stability}, active flux \cite{barsukow2025stability,abgrall2025some}, 
and overset-grid \cite{glaubitz2025towards} methods, among others.
The success of SBP operators rests on the discrete energy method, which is easiest explained for linear problems.
To this end, consider partitioning the spatial domain into elements, each equipped with its own SBP operator.
At the continuous level, integration by parts shows that the energy within an element changes only through the fluxes across its boundary.
The SBP property allows us to mimic integration by parts---and with it this energy balance---at the discrete level:
All volume contributions to the discrete energy balance are moved to the element boundaries, so that no spurious energy can be produced inside an element.
What remains is controlling energy production at the element interfaces and the physical boundary.
This is achieved by enforcing interface and boundary conditions weakly, through simultaneous approximation terms \cite{carpenter1994time,carpenter1999stable} or numerical fluxes \cite{toro2013riemann}, which bound the resulting surface terms by adding suitable dissipation.
The result is a semi-discretization whose energy production is controlled entirely by the prescribed boundary data, with no spurious interior contributions---mirroring the continuous energy estimate.

\subsection*{Entropy stability and flux differencing}

For the numerical treatment of \emph{nonlinear} conservation laws, the step from discrete linear energy stability to discrete nonlinear entropy stability for a high-order scheme rests on two more ingredients---in addition to the SBP property.
The first is the notion of an entropy-conservative two-point flux developed by Tadmor \cite{tadmor1987numerical}, which encodes the entropy conservation relation in a discrete, symmetric form \cite{ranocha2018comparison}.
The second is the \emph{flux-differencing} framework of Fisher and Carpenter and collaborators \cite{fisher2013high,fisher2013discretely}:
They showed that any diagonal-norm SBP operator can be rewritten in a telescopic, subcell finite-volume form, and that replacing the subcell flux with an entropy-conservative two-point flux yields a high-order scheme that is conservative in the sense of Lax--Wendroff and discretely entropy-conservative.
Augmenting the element interfaces with suitable dissipation then renders the scheme entropy stable.
This construction is remarkably systematic and operator independent, since the same two-point flux can be combined with different SBP operators.
It underlies many modern robust high-order methods, including split-form and entropy-stable DG schemes \cite{gassner2013skew,gassner2016split,carpenter2014entropy,chen2017entropy,ranocha2023efficient}, and has been extended from collocated Gauss--Lobatto nodes to Gauss nodes and generalized SBP operators \cite{fernandez2014generalized,chan2018discretely,chan2019efficient,mateo2023flux}; see \cite{chen2020review} for a review.

\subsection*{FSBP operators for general function spaces}

At their core, however, most existing SBP operators are designed to be exact for polynomials up to a certain degree.
This reflects the (sometimes implicit) assumption that polynomials are well-suited to approximate the solution of the underlying problem.
For many problems, this is reasonable, but not for all.
Boundary-layer and highly oscillatory or wave-dominated solutions, for instance, are often better represented by exponential or trigonometric functions than by polynomials.
Motivated by this observation, \cite{glaubitz2023summation} introduced \emph{FSBP operators}, which are designed to be exact for a general finite-dimensional function space $\mathcal{F}$, rather than for polynomials.
They demonstrated that most established results for polynomial-based SBP operators---their mimetic properties, the connection to positive quadrature rules, and strategies for their construction---carry over to this more general setting; sometimes under appropriate modifications or restrictions. Furthermore, tailoring $\mathcal{F}$ to a problem at hand can improve accuracy, particularly for coarse discretizations. 
To this end, the function space $\mathcal{F}$ is typically chosen based on some a priori knowledge of the solution behavior. 
For instance, $\mathcal{F}$ can include exponential functions to handle particularly steep gradients at domain boundaries in boundary-layer problems or (approximate) steady-state solutions.
The FSBP framework has since been developed in several directions, including operators for second derivatives \cite{glaubitz2024summation}, multiple dimensions \cite{glaubitz2023multi}, energy-stable RBF methods \cite{glaubitz2024energy}, optimization-based construction \cite{glaubitz2025optimization,glaubitz2026summation}, overset grids \cite{glaubitz2025towards}, and Gaussian FSBP operators with optimal nodes corresponding to generalized Gaussian quadratures \cite{hale2026summation,bercik2026construction,glaubitz2026gaussian}.

So far, however, the flux-differencing framework---and with it the systematic construction of provably conservative and entropy-stable schemes for \emph{nonlinear} conservation laws---has been developed exclusively for polynomial-based SBP operators.
It is therefore natural to ask whether, and under which conditions, flux differencing carries over to FSBP operators for general function spaces.
Answering this question positively would make the entire entropy-stable flux-differencing toolbox readily available to solution-tailored approximation spaces.
At the same time, although the appeal of FSBP operators lies in improved accuracy through solution-tailored function spaces, a systematic analysis of their accuracy---and, in particular, of their design order---has not yet been provided in the literature.
Addressing both aspects is the goal of the present work.

\subsection*{Our contribution}

We show that the flux-differencing framework extends naturally to FSBP operators that are designed to be exact for a general function space $\mathcal{F}$ under mild restrictions. 
These restrictions---for instance, $1 \in \mathcal{F}$---are automatically satisfied for polynomial SBP operators but need to be stated explicitly for FSBP operators. 
Specifically, our main contributions are the following.
\begin{itemize}
	\item 
	We formulate the flux-differencing semi-discretization for FSBP operators and identify the conditions under which it is conservative in the sense of Lax--Wendroff: namely, a diagonal norm matrix and the inclusion of constants in the function space $\mathcal{F}$, i.e., $1 \in \mathcal{F}$.
	
	\item 
	We verify that, when combined with a consistent entropy-conservative two-point flux, the resulting scheme is semi-discretely entropy-conservative on a single element and entropy-stable across element interfaces.
	The two-point flux is reused unchanged; only the underlying function space $\mathcal{F}$ differs from the polynomial case.
	
	\item
	We provide the first theoretical accuracy analysis of FSBP operators.
	We introduce the notion of design order for FSBP operators, bound their truncation error by the simultaneous approximation power of $\mathcal{F}$, and characterize the design order through an algebraic (Wronskian-based) criterion on the function space.
	In particular, suitable non-polynomial spaces achieve the same design order as polynomial spaces of equal dimension, while tailoring the space can substantially reduce the error constant.
	We also show that the flux-differencing semi-discretization retains the design order of the underlying FSBP operator for smooth solutions.
	
	\item 
	We formulate and implement the flux-differencing semi-discretization in a spatially varying way that allows for the use of a different function space $\mathcal{F}_k$, a different set of nodes $\mathbf{x}_k$, and a different FSBP operator $D_k$ in each element of a multi-element approach. 
	Our codebase is publicly available at \cite{haase2026reproducibility}. 
	It builds upon the Julia package Trixi.jl \cite{ranocha2022adaptive} and extends its functionality from spatially static polynomial SBP to spatially varying FSBP operators. 
	Notably, as a special case, if we restrict the solver to use only polynomial function spaces and SBP operators, we obtain the same results, up to round-off, as the polynomial solvers implemented in Trixi.jl.
	
	\item 
	We demonstrate the theory with numerical experiments for the Burgers' equation and the one- and two-dimensional compressible Euler equations, demonstrating discrete conservation, entropy conservation, and entropy stability. 
	We also exemplify improved accuracy over polynomial-based operators for solutions better represented by non-polynomial function spaces.
	
\end{itemize}

\subsection*{Notation}

The notation we use throughout the paper is summarized in \Cref{app:notation}.

\subsection*{Outline}

In \Cref{sec:FSBP}, we recall the necessary background on hyperbolic conservation laws, entropy stability, entropy-conservative fluxes, and FSBP operators.
In \Cref{sec:fluxDiff}, we develop the flux-differencing framework for FSBP operators and establish its conservation and entropy conservation/stability properties.
In \Cref{sec:accuracy}, we analyze the accuracy and design order of FSBP operators and show that the flux-differencing semi-discretization retains this design order.
\Cref{sec:numerics} presents numerical experiments, and \Cref{sec:summary} concludes with a summary and an outlook on future work.
 
\section{Preliminaries}
\label{sec:FSBP}

We present the necessary preliminaries for the remainder of the paper.
We recall one-dimensional hyperbolic conservation laws and their entropy framework (\Cref{subsec:cons_laws}), the notion of entropy-conservative and entropy-stable two-point fluxes (\Cref{subsec:EC_fluxes}), and FSBP operators for general function spaces (\Cref{subsec:FSBP_ops}).
For simplicity, we restrict the presentation to one space dimension. 
Multi-dimensional FSBP operators can be constructed using a tensor-product approach or following \cite{glaubitz2023multi}.
Furthermore, systems are treated componentwise, unless stated otherwise. 
Curvilinear problems are not discussed, however, and left for future work.

\subsection{Hyperbolic conservation laws and entropy}
\label{subsec:cons_laws}

Consider the hyperbolic conservation law
\begin{equation}\label{eq:cons_law}
	\partial_t \boldsymbol{u} + \partial_x \boldsymbol{f}(\boldsymbol{u}) = 0, \quad x \in (x_L, x_R), \quad t > 0,
\end{equation}
equipped with suitable initial and boundary conditions.
Here, $\boldsymbol{u} = [u_1(x,t), \dots, u_m(x,t)]^T \in \Omega \subseteq \R^m$ is the vector of conserved variables and $\boldsymbol{f} \colon \Omega \to \R^m$ is the (possibly nonlinear) flux.
Solutions of \cref{eq:cons_law} can develop discontinuities in finite time, at which point weak solutions cease to be unique \cite{lax1960systems, dafermos2016hyperbolic, leveque2002finite}.
Uniqueness can be restored in some cases by supplementing \cref{eq:cons_law} with an entropy condition \cite{kruzhkov1970first, harten1983symmetric, godlewski1996numerical}.
To this end, let $(S, F)$ be an entropy pair, that is, a strictly convex entropy $S \colon \Omega \to \R$ together with an entropy flux $F \colon \Omega \to \R$ satisfying the compatibility relation
\begin{equation}\label{eq:entropy_compat}
	F'(\boldsymbol{u}) = \boldsymbol{w}(\boldsymbol{u})^T \boldsymbol{f}'(\boldsymbol{u}), \quad 
	\boldsymbol{w}(\boldsymbol{u}) := S'(\boldsymbol{u})^T .
\end{equation}
The quantity $\boldsymbol{w} = \boldsymbol{w}(\boldsymbol{u})$ is called the vector of \emph{entropy variables}. 
We assume the map $\boldsymbol{u} \mapsto \boldsymbol{w}$ is bijective, so that $\boldsymbol{u}$ can be recovered from $\boldsymbol{w}$.
Smooth solutions of \cref{eq:cons_law} then satisfy the entropy identity $\partial_t S(\boldsymbol{u}) + \partial_x F(\boldsymbol{u}) = 0$, whereas entropy-admissible weak solutions satisfy the entropy inequality
\begin{equation}\label{eq:entropy_ineq}
	\partial_t S(\boldsymbol{u}) + \partial_x F(\boldsymbol{u}) \le 0
\end{equation}
in a distributional sense.
Finally, we introduce the \emph{flux potential}
\begin{equation}\label{eq:flux_potential}
	\psi(\boldsymbol{w}) := \boldsymbol{w}^T \boldsymbol{f}(\boldsymbol{u}(\boldsymbol{w})) - F(\boldsymbol{u}(\boldsymbol{w})),
\end{equation}
which satisfies $\psi'(\boldsymbol{w}) = \boldsymbol{f}(\boldsymbol{u}(\boldsymbol{w}))$ and is central to the construction of entropy-conservative fluxes below.

\subsection{Entropy-conservative and entropy-stable numerical fluxes}
\label{subsec:EC_fluxes}

The semi-discretizations of \cref{eq:cons_law} considered in this work are built from two-point numerical fluxes $\boldsymbol{f}_{\mathrm{num}}(\boldsymbol{u}_l, \boldsymbol{u}_k) \colon \Omega \times \Omega \to \R^m$ depending on a pair of states $\boldsymbol{u}_l, \boldsymbol{u}_k$.
A two-point flux is \emph{consistent} if $\boldsymbol{f}_{\mathrm{num}}(\boldsymbol{u}, \boldsymbol{u}) = \boldsymbol{f}(\boldsymbol{u})$ and \emph{symmetric} if $\boldsymbol{f}_{\mathrm{num}}(\boldsymbol{u}_l, \boldsymbol{u}_k) = \boldsymbol{f}_{\mathrm{num}}(\boldsymbol{u}_k, \boldsymbol{u}_l)$.
Following \cite{tadmor1987numerical, tadmor2003entropy}, a consistent and symmetric two-point flux $\boldsymbol{f}_{\mathrm{ec}}$ is called \emph{entropy-conservative} (EC) if
\begin{equation}\label{eq:EC_condition}
	(\boldsymbol{w}_l - \boldsymbol{w}_k)^T \boldsymbol{f}_{\mathrm{ec}}(\boldsymbol{u}_l, \boldsymbol{u}_k) = \psi_l - \psi_k
\end{equation}
holds for all admissible states, where $\boldsymbol{w}_l := \boldsymbol{w}(\boldsymbol{u}_l)$ and $\psi_l := \psi(\boldsymbol{w}_l)$.
Condition \cref{eq:EC_condition} is the discrete counterpart of the chain rule underlying the entropy identity.
Explicit EC fluxes are available for many systems of interest, including the compressible Euler equations \cite{ismail2009affordable, chandrashekar2013kinetic, ranocha2018comparison}, the shallow water equations \cite{fjordholm2011well, wintermeyer2017entropy}, and the ideal magnetohydrodynamics (MHD) equations \cite{winters2016affordable, derigs2018ideal}.
Moreover, related kinetic energy and entropy preserving (KEEP) Euler fluxes, which satisfy \cref{eq:EC_condition} approximately rather than exactly, were developed in \cite{kuya2018kinetic, shima2021preventing}.

An \emph{entropy-stable} (ES) flux $\boldsymbol{f}_{\mathrm{es}}$ is obtained by adding dissipation proportional to the jump $\boldsymbol{w}_k - \boldsymbol{w}_l$ in the entropy variables \cite{fjordholm2012arbitrarily}:
\begin{equation}\label{eq:ES_flux}
	\boldsymbol{f}_{\mathrm{es}}(\boldsymbol{u}_l, \boldsymbol{u}_k) = \boldsymbol{f}_{\mathrm{ec}}(\boldsymbol{u}_l, \boldsymbol{u}_k) - \tfrac{1}{2}\, \Lambda(\boldsymbol{u}_l, \boldsymbol{u}_k)\,(\boldsymbol{w}_k - \boldsymbol{w}_l),
\end{equation}
with a symmetric positive semidefinite dissipation matrix $\Lambda$.
Together with \cref{eq:EC_condition}, this yields
\begin{equation}\label{eq:ES_inequality}
	(\boldsymbol{w}_k - \boldsymbol{w}_l)^T \boldsymbol{f}_{\mathrm{es}}(\boldsymbol{u}_l, \boldsymbol{u}_k) = (\psi_k - \psi_l) - \tfrac{1}{2}(\boldsymbol{w}_k - \boldsymbol{w}_l)^T \Lambda (\boldsymbol{w}_k - \boldsymbol{w}_l) \le \psi_k - \psi_l ,
\end{equation}
which produces entropy dissipation at element interfaces, as made precise in \Cref{sec:fluxDiff}.

For the accuracy analysis in \Cref{sec:accuracy}, we also recall a local consistency property that holds for any consistent and symmetric two-point flux.

\begin{lemma}[Local consistency of two-point fluxes]\label{lem:EC_derivative}
	Let $\boldsymbol{f}_{\mathrm{num}} \colon \Omega \times \Omega \to \R^m$ be a consistent and symmetric two-point flux that is continuously differentiable in a neighborhood around $(\boldsymbol{u},\boldsymbol{u})$. Then
	\begin{equation}\label{eq:EC_derivative}
		2\,\partial_1 \boldsymbol{f}_{\mathrm{num}}(\boldsymbol{u},\boldsymbol{u}) = 2\,\partial_2 \boldsymbol{f}_{\mathrm{num}}(\boldsymbol{u},\boldsymbol{u}) = \boldsymbol{f}'(\boldsymbol{u}),
	\end{equation}
	where $\partial_1$ and $\partial_2$ denote differentiation with respect to the first and second argument, respectively.
\end{lemma}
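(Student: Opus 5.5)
The plan is to differentiate two identities along suitable curves in $\Omega\times\Omega$: consistency along the diagonal, and symmetry under swapping the arguments. Throughout, $\partial_1 \boldsymbol{f}_{\mathrm{num}}$ and $\partial_2 \boldsymbol{f}_{\mathrm{num}}$ denote the $m\times m$ Jacobians with respect to the first and second argument. Continuous differentiability near $(\boldsymbol{u},\boldsymbol{u})$ guarantees that both Jacobians exist there and that the chain rule applies to compositions with smooth curves through this point.

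\emph{Step 1 (symmetry).} Define $g(\boldsymbol{a},\boldsymbol{b}) := \boldsymbol{f}_{\mathrm{num}}(\boldsymbol{b},\boldsymbol{a})$. By symmetry, $g \equiv \boldsymbol{f}_{\mathrm{num}}$ on a neighborhood of $(\boldsymbol{u},\boldsymbol{u})$. Differentiating both sides with respect to the first argument and using the chain rule gives
\begin{equation*}
	\partial_1 \boldsymbol{f}_{\mathrm{num}}(\boldsymbol{a},\boldsymbol{b}) = \partial_2 \boldsymbol{f}_{\mathrm{num}}(\boldsymbol{b},\boldsymbol{a}).
\end{equation*}
Evaluating at $\boldsymbol{a}=\boldsymbol{b}=\boldsymbol{u}$ yields $\partial_1 \boldsymbol{f}_{\mathrm{num}}(\boldsymbol{u},\boldsymbol{u}) = \partial_2 \boldsymbol{f}_{\mathrm{num}}(\boldsymbol{u},\boldsymbol{u})$. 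This is the first equality in \cref{eq:EC_derivative}.

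\emph{Step 2 (consistency).} Fix a direction $\boldsymbol{v}\in\R^m$ and consider $h(\varepsilon) := \boldsymbol{f}_{\mathrm{num}}(\boldsymbol{u}+\varepsilon\boldsymbol{v},\boldsymbol{u}+\varepsilon\boldsymbol{v})$. By consistency, $h(\varepsilon) = \boldsymbol{f}(\boldsymbol{u}+\varepsilon\boldsymbol{v})$. We assume $\Omega$ is open, or at least contains a neighborhood of $\boldsymbol{u}$, so that this holds for small $|\varepsilon|$. Differentiating at $\varepsilon=0$ via the chain rule gives
\begin{equation*}
	\bigl(\partial_1 \boldsymbol{f}_{\mathrm{num}}(\boldsymbol{u},\boldsymbol{u}) + \partial_2 \boldsymbol{f}_{\mathrm{num}}(\boldsymbol{u},\boldsymbol{u})\bigr)\boldsymbol{v} = \boldsymbol{f}'(\boldsymbol{u})\boldsymbol{v}.
\end{equation*}
Here the right-hand side is differentiable because $\boldsymbol{f}$ coincides with the $C^1$ map $\boldsymbol{a}\mapsto\boldsymbol{f}_{\mathrm{num}}(\boldsymbol{a},\boldsymbol{a})$ near $\boldsymbol{u}$. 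Since $\boldsymbol{v}$ is arbitrary, we obtain $\partial_1\boldsymbol{f}_{\mathrm{num}}(\boldsymbol{u},\boldsymbol{u}) + \partial_2\boldsymbol{f}_{\mathrm{num}}(\boldsymbol{u},\boldsymbol{u}) = \boldsymbol{f}'(\boldsymbol{u})$. Combining this with Step 1 gives $2\,\partial_1 \boldsymbol{f}_{\mathrm{num}}(\boldsymbol{u},\boldsymbol{u}) = 2\,\partial_2 \boldsymbol{f}_{\mathrm{num}}(\boldsymbol{u},\boldsymbol{u}) = \boldsymbol{f}'(\boldsymbol{u})$, which is \cref{eq:EC_derivative}.

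There is no substantial obstacle. The only point requiring care is the chain rule applied to the composition with the swap map in Step 1: the swap exchanges the roles of the two Jacobian blocks, and evaluating at the diagonal point makes both sides refer to the same location. A secondary point is the implicit requirement that $\boldsymbol{u}$ be an interior point of $\Omega$, which the paper's standing assumptions on admissible states provide.
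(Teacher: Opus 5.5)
Your proof is correct and follows the paper's route exactly: differentiate the consistency relation along the diagonal to get $\partial_1 \boldsymbol{f}_{\mathrm{num}}(\boldsymbol{u},\boldsymbol{u}) + \partial_2 \boldsymbol{f}_{\mathrm{num}}(\boldsymbol{u},\boldsymbol{u}) = \boldsymbol{f}'(\boldsymbol{u})$, use symmetry to equate the two Jacobians, and combine. You spell out the chain rule through the swap map and the interior-point and differentiability requirements that the paper leaves implicit.
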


\begin{proof}
	Differentiating the consistency relation $\boldsymbol{f}_{\mathrm{num}}(\boldsymbol{u},\boldsymbol{u}) = \boldsymbol{f}(\boldsymbol{u})$ provides us with $\partial_1 \boldsymbol{f}_{\mathrm{num}}(\boldsymbol{u},\boldsymbol{u}) + \partial_2 \boldsymbol{f}_{\mathrm{num}}(\boldsymbol{u},\boldsymbol{u}) = \boldsymbol{f}'(\boldsymbol{u})$. 
	Symmetry, $\boldsymbol{f}_{\mathrm{num}}(\boldsymbol{a},\boldsymbol{b}) = \boldsymbol{f}_{\mathrm{num}}(\boldsymbol{b},\boldsymbol{a})$, yields $\partial_1 \boldsymbol{f}_{\mathrm{num}}(\boldsymbol{u},\boldsymbol{u}) = \partial_2 \boldsymbol{f}_{\mathrm{num}}(\boldsymbol{u},\boldsymbol{u})$. Combining both identities proves \cref{eq:EC_derivative}.
\end{proof}

\subsection{FSBP operators}
\label{subsec:FSBP_ops}

We discretize the spatial domain $[x_L, x_R]$ on a grid $\mathbf{x} = [x_1, \dots, x_N]^T$ with $x_L = x_1 < \dots < x_N = x_R$ that includes the boundary points.
For a function $u: \R \to \R$, we write $\mathbf{u} = [u(x_1), \dots, u(x_N)]^T$ and $\mathbf{u}' = [u'(x_1), \dots, u'(x_N)]^T$ for the vectors of nodal values of $u$ and its derivative.
The defining idea of FSBP operators \cite{glaubitz2023summation} is to replace the polynomial exactness of classical SBP operators by exactness on a prescribed function space $\mathcal{F}$. 

\begin{definition}[FSBP operators]\label{def:FSBP}
	Let $\mathcal{F} \subset C^1([x_L, x_R])$ be a finite-dimensional function space.
	Furthermore, let $R \in \R^{2 \times N}$ be the boundary restriction operator with $R \mathbf{u} = [u(x_L), u(x_R)]^T$, and let $B = \diag(-1, 1) \in \R^{2 \times 2}$ encode the outward unit normals at $x_L$ and $x_R$.
	An operator $D = P^{-1} Q \in \R^{N \times N}$ approximating $\partial_x$ on the grid $\mathbf{x} \in \R^N$ is a \emph{$\mathcal{F}$-exact FSBP operator} if
	\begin{enumerate}
		\item[(i)] $D \mathbf{u} = \mathbf{u}'$ for all $u \in \mathcal{F}$,
		\item[(ii)] $P = \diag(p_1, \dots, p_N)$ with $p_n > 0$ for all $n$, and
		\item[(iii)] $Q + Q^T = R^T B R$.
	\end{enumerate}
\end{definition}


Relation (i) in \cref{def:FSBP} ensures that $D$ accurately approximates the continuous derivative operator $\partial_x$ by requiring $D$ to be exact for all functions in $\mathcal{F}$.
Condition (ii) guarantees that $P$ induces a proper discrete inner product and norm, which are given by $\scp{\mathbf{u}}{\mathbf{v}}_P = \mathbf{u}^T P \mathbf{v}$ and $\| \mathbf{u} \|_P^2 = \mathbf{u}^T P \mathbf{u}$, respectively.
Relation (iii) encodes the SBP property in the projection-based form used in modern entropy-stable schemes \cite{fernandez2014generalized, ranocha2023efficient}. 
For the closed operators considered here, we have $R = [\mathbf{e}_1, \mathbf{e}_N]^T$ with $\mathbf{e}_1 = [1,0,\dots,0]^T$ and $\mathbf{e}_N = [0,\dots,0,1]^T$.
The boundary restriction operator $R$ thus selects the first and last nodal values and $R^T B R = \diag(-1, 0, \dots, 0, 1)$.
It allows us to mimic integration by parts (IBP) at a discrete level.
Recall that IBP reads
\begin{equation}\label{eq:IBP}
 	\int_{x_L}^{x_R} u (\partial_x v) \intd x + \int_{x_L}^{x_R} (\partial_x u) v \intd x
		= uv \big|_{x_L}^{x_R}.
\end{equation}
The discrete version of \cref{eq:IBP}, which follows from (iii) in \cref{def:FSBP}, is
\begin{equation}\label{eq:SBP}
	\mathbf{u}^T P D \mathbf{v} + \mathbf{u}^T D^T P \mathbf{v}
		= (R\mathbf{u})^T B (R\mathbf{v}).
\end{equation}
The right-hand side of \cref{eq:SBP} equals $u(x_R) v(x_R) - u(x_L) v(x_L)$, mimicking the boundary term $uv\big|_{x_L}^{x_R}$ in \cref{eq:IBP}, while the left-hand side approximates the two integrals.

Two further properties are subsequently used: the exact differentiation of constants and the characterization of FSBP operators via quadrature rules.

\begin{lemma}[Differentiation of constants]\label{lem:constants}
	If $1 \in \mathcal{F}$, then $D \mathbf{1} = 0$.
\end{lemma}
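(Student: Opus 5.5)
The plan is to apply the exactness condition (i) of \cref{def:FSBP} directly to the constant function. Let $u \equiv 1$ on $[x_L, x_R]$. By hypothesis, $u \in \mathcal{F}$, and clearly $u \in C^1([x_L,x_R])$ with $u' \equiv 0$.

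The two nodal vectors are then immediate. The vector of nodal values is $\mathbf{u} = [u(x_1), \dots, u(x_N)]^T = \mathbf{1}$. The vector of derivative values is $\mathbf{u}' = [u'(x_1), \dots, u'(x_N)]^T = \mathbf{0}$.

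Condition (i) requires $D\mathbf{u} = \mathbf{u}'$ for every $u \in \mathcal{F}$. Applying it to $u \equiv 1$ gives $D\mathbf{1} = \mathbf{0}$, which is the claim.

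There is no real obstacle. The only point worth stressing is that the hypothesis $1 \in \mathcal{F}$ is essential. Unlike the polynomial case, a general function space $\mathcal{F}$ need not contain the constants, and without them nothing forces $D\mathbf{1}=0$.

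A remark for later use: combining $D\mathbf{1}=0$ with the SBP property \cref{eq:SBP} yields $\mathbf{1}^T Q = \mathbf{1}^T R^T B R$, i.e., $\mathbf{1}^T Q = (-\mathbf{e}_1 + \mathbf{e}_N)^T$. To see this, note that $Q\mathbf{1} = P D\mathbf{1} = 0$, so
\[
Q^T\mathbf{1} = R^TBR\,\mathbf{1} - Q\mathbf{1} = R^TBR\,\mathbf{1}.
\]
This column-sum identity is presumably the form needed for the Lax--Wendroff conservation argument in \Cref{sec:fluxDiff}.
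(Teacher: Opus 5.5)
Your proof is correct and is the paper's argument: apply exactness (i) of \cref{def:FSBP} to $u \equiv 1$, whose nodal vector is $\mathbf{1}$ and whose derivative vanishes. Your extra identity $Q^T\mathbf{1} = R^T B R\,\mathbf{1}$ is also correct; the paper uses it in the proof of \cref{lem:vol_entropy} rather than in the conservation proof.
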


\begin{proof}
	Apply property (i) of \cref{def:FSBP} to $u \equiv 1$, whose derivative vanishes identically.
\end{proof}

We will see in \Cref{subsec:fd_cons} that inclusion of constants ensures conservation of flux differencing for FSBP operators: 
it removes the spurious volume term $(D\mathbf{1})^T P \boldsymbol{u}$ that would otherwise create or destroy the conserved quantity \cite{glaubitz2023summation}.
We therefore adopt the standing assumption $1 \in \mathcal{F}$ throughout.

Regarding existence, an FSBP operator on the grid $\mathbf{x}$ exists if and only if there is a positive quadrature rule---with weights given by the diagonal of $P$---that is exact for the derivative product space $(\mathcal{F}\mathcal{F})' = \{ (fg)' \mid f, g \in \mathcal{F} \}$; see \cite{glaubitz2023summation}.
Such quadrature rules can be obtained, for instance, by a least-squares approach on sufficiently many nodes \cite{huybrechs2009stable,glaubitz2020stableQF,glaubitz2021stableCFs,glaubitz2023construction}, optimization procedures \cite{glaubitz2023multi,glaubitz2025optimization,glaubitz2026summation}, or with a minimal number of nodes, through generalized Gauss-type rules \cite{ma1996generalized,bremer2010nonlinear,huybrechs2022computation,hale2026summation}.
\section{Flux differencing for FSBP operators}
\label{sec:fluxDiff}

We now combine the FSBP operators of \Cref{subsec:FSBP_ops} with the flux-differencing framework.
Notably, existing flux-differencing semi-discretizations have been built on polynomial SBP operators \cite{fisher2013high,carpenter2014entropy,gassner2016split}. 
We extend them here to the larger class of FSBP operators, applicable to more general (potentially non-polynomial) approximation spaces.
This will enable us to systematically develop provably conservative and entropy-stable FSBP methods for any hyperbolic conservation law for which a consistent, symmetric EC two-point flux is available---which is the case for most relevant problems.
We will see that the existing results and proofs for polynomial SBP operators carry over to FSBP operators with only mild modifications. 
For completeness, we still present the proofs, but move most of them into \Cref{app:conservation,app:entropy}. 

We first introduce the semi-discretization on a single element (\Cref{subsec:fd_scheme}) and establish its conservation (\Cref{subsec:fd_cons}) and entropy properties (\Cref{subsec:fd_entropy}).
We then couple elements and add interface dissipation (\Cref{subsec:fd_multi}).
The accuracy of FSBP operators and of the resulting flux-differencing schemes is analyzed separately in \Cref{sec:accuracy}.
Unless stated otherwise, $D = P^{-1}Q$ is an FSBP operator with $1 \in \mathcal{F}$, and $\boldsymbol{f}^{\mathrm{vol}}$ is a consistent, symmetric entropy-conservative flux.

\subsection{The flux-differencing semi-discretization}
\label{subsec:fd_scheme}

The defining idea of flux differencing is to replace the action $D \mathbf{f}$ of the derivative operator $D$ on the nodal flux values $\mathbf{f}$ by a two-point operation built from a symmetric \emph{volume flux} $\boldsymbol{f}^{\mathrm{vol}}$ \cite{fisher2013high,carpenter2014entropy,gassner2016split}.
On a single element $[x_L,x_R]$ with grid $\mathbf{x} = [x_1, \dots, x_N]^T$ and $x_L = x_1 < \dots < x_N = x_R$, the flux-differencing semi-discretization reads
\begin{equation}\label{eq:fd_scheme}
	\frac{\d}{\d t} \mathbf{u}
        = - \vol
        + P^{-1} R^T B (R \mathbf{f} - \mathbf{f}^{\mathrm{surf}}), \quad
    \vol_{i} = \sum_{n=1}^{N} 2 D_{i,n}\, \boldsymbol{f}^{\mathrm{vol}}( \boldsymbol{u}_{i}, \boldsymbol{u}_{n} ).
\end{equation}
Notably, we consider the system case of \cref{eq:cons_law}, i.e.\ $\boldsymbol{u} \in \R^m$ with flux $\boldsymbol{f} \colon \R^m \to \R^m$. 
That is, the discrete solution $\mathbf{u}$, the nodal flux array $\mathbf{f} = [\boldsymbol{f}(\boldsymbol{u}_1), \dots, \boldsymbol{f}(\boldsymbol{u}_N)]^T$, and the volume term $\vol$ are arrays in $\R^{N \times m}$ whose $i$-th rows are the state $\boldsymbol{u}_i = \boldsymbol{u}(x_i) \in \R^m$, the flux $\boldsymbol{f}(\boldsymbol{u}_i) \in \R^m$, and $\vol_i \in \R^m$, respectively.
The matrix operators $D$, $P$, $R$, and $B$ act on the nodal (row) index and apply componentwise across the $m$ fields, whereas the two-point volume flux $\boldsymbol{f}^{\mathrm{vol}}$ couples all $m$ components at each pair of nodes.
The array $\mathbf{f}^{\mathrm{surf}}$ collects the \emph{surface fluxes} $\boldsymbol{f}^{\mathrm{surf}}_L, \boldsymbol{f}^{\mathrm{surf}}_R \in \R^m$ at the two boundary nodes, which couple the element to external boundary data---or to its neighbors in a multi-element discretization.
Recall that the volume flux $\boldsymbol{f}^{\mathrm{vol}}$ is a consistent and symmetric EC two-point flux, whereas the ES surface flux $\boldsymbol{f}^{\mathrm{surf}}$ is consistent but may incorporate dissipation.

A few remarks are in order.

\begin{remark}
	Let $1 \in \mathcal{F}$ and $D$ is an $\mathcal{F}$-exact FSBP operators. 
	Then \cref{lem:constants} implies that for the arithmetic-mean volume flux $\boldsymbol{f}^{\mathrm{vol}}(\boldsymbol{u}_i, \boldsymbol{u}_j) = \tfrac{1}{2}\big(\boldsymbol{f}(\boldsymbol{u}_i) + \boldsymbol{f}(\boldsymbol{u}_j)\big)$, the volume term reduces to $\vol_i = (D\mathbf{f})_i$. 
	That is, we recover the standard discretization.
\end{remark}

\begin{remark}[Scalar case]\label{rem:fd_systems}
	Setting $m = 1$ recovers a scalar conservation law: the arrays $\mathbf{u}, \mathbf{f}, \vol \in \R^N$ become nodal vectors and the two-point flux $\boldsymbol{f}^{\mathrm{vol}}$ is scalar, with all results below specializing accordingly.
	More generally, the operators act on the stacked solution as $D \otimes I_m$, $P \otimes I_m$, and so on, so that the SBP algebra carries over unchanged from the scalar setting.
\end{remark}

\subsection{Conservation on a single element}
\label{subsec:fd_cons}

A semi-discretization is said to be conservative in the sense of \emph{Lax--Wendroff} if it can be cast in conservation (flux-difference) form.
That is, if the discrete solution changes only through numerical flux differences across cell interfaces.
By the Lax--Wendroff theorem \cite{lax1960systems}, a consistent scheme of this form that converges under grid refinement converges to a weak solution of \cref{eq:cons_law}.
In particular, it reproduces discontinuities at the correct propagation speed.
A conservation form is therefore a prerequisite for reliable shock-capturing.

The following result shows that the flux-differencing FSBP semi-discretization \cref{eq:fd_scheme} possesses exactly this structure, under the two assumptions that recur throughout this section: a diagonal norm and the inclusion of the constants in $\mathcal{F}$.

\begin{corollary}[Discrete conservation]\label{cor:conservation}
	Let $\mathcal{F}$ be a finite-dimensional function space with $1 \in \mathcal{F}$ and let $D = P^{-1} Q$ be an diagonal-norm $\mathcal{F}$-exact FSBP operator.
	Then, the semi-discretization \cref{eq:fd_scheme} satisfies
	\begin{equation}\label{eq:conservation}
		\frac{\d}{\d t}\, \mathbf{1}^T P \mathbf{u} = \boldsymbol{f}^{\mathrm{surf}}_L - \boldsymbol{f}^{\mathrm{surf}}_R ,
	\end{equation}
	i.e., the discrete mass $\mathbf{1}^T P \mathbf{u} = \sum_i p_i \boldsymbol{u}_i \in \R^m$ changes only through the surface fluxes.
	In particular, the scheme is conservative in the Lax--Wendroff sense.
\end{corollary}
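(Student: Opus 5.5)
We multiply \cref{eq:fd_scheme} from the left by $\mathbf{1}^T P$ and treat the volume and surface contributions separately; by \cref{rem:fd_systems} it suffices to argue componentwise. The surface term is immediate. Since $\mathbf{1}^T P P^{-1} R^T B = (R\mathbf{1})^T B = [1,1]\diag(-1,1) = [-1,1]$, we get
\begin{equation*}
	\mathbf{1}^T R^T B (R\mathbf{f} - \mathbf{f}^{\mathrm{surf}}) = \boldsymbol{f}(\boldsymbol{u}_N) - \boldsymbol{f}(\boldsymbol{u}_1) - \boldsymbol{f}^{\mathrm{surf}}_R + \boldsymbol{f}^{\mathrm{surf}}_L .
\end{equation*}
It therefore remains to show that $\mathbf{1}^T P \vol = \boldsymbol{f}(\boldsymbol{u}_N) - \boldsymbol{f}(\boldsymbol{u}_1)$. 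Then the interior nodal fluxes cancel, which yields \cref{eq:conservation}.

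For the volume term, we write $P D = Q$, which uses that $P$ is diagonal, so that $p_i D_{i,n} = Q_{i,n}$. This gives
\begin{equation*}
	\mathbf{1}^T P \vol = \sum_{i,n} 2 Q_{i,n}\, \boldsymbol{f}^{\mathrm{vol}}(\boldsymbol{u}_i,\boldsymbol{u}_n).
\end{equation*}
Since $\boldsymbol{f}^{\mathrm{vol}}$ is symmetric, only the symmetric part of $Q$ survives. The double sum therefore equals $\sum_{i,n} (Q_{i,n}+Q_{n,i})\, \boldsymbol{f}^{\mathrm{vol}}(\boldsymbol{u}_i,\boldsymbol{u}_n)$. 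By property (iii) of \cref{def:FSBP}, $Q + Q^T = \diag(-1,0,\dots,0,1)$, so only the diagonal entries $(1,1)$ and $(N,N)$ remain. Consistency of $\boldsymbol{f}^{\mathrm{vol}}$ then turns these into $\boldsymbol{f}(\boldsymbol{u}_N) - \boldsymbol{f}(\boldsymbol{u}_1)$, as required.

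The one point that needs care is where the hypothesis $1 \in \mathcal{F}$ enters, since the symmetrization argument above seems to use only (ii) and (iii). We will check this explicitly. Property (iii) applied to $\mathbf{1}$ gives $\mathbf{1}^T Q + \mathbf{1}^T Q^T = \mathbf{1}^T R^T B R$. \Cref{lem:constants} gives $Q\mathbf{1} = P D\mathbf{1} = 0$, so $\mathbf{1}^T Q = \mathbf{e}_N^T - \mathbf{e}_1^T$. This is exactly the telescoping structure that lets us rewrite $\vol$ in the subcell flux-difference form $p_i^{-1}(\bar{\boldsymbol{f}}_{i} - \bar{\boldsymbol{f}}_{i-1})$, with $\bar{\boldsymbol{f}}_i = \sum_{k \le i} \sum_{n} 2 Q_{k,n} \boldsymbol{f}^{\mathrm{vol}}(\boldsymbol{u}_k,\boldsymbol{u}_n)$ and $\bar{\boldsymbol{f}}_0 = \boldsymbol{f}(\boldsymbol{u}_1)$. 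Row sums of $Q$ being zero, i.e.\ $D\mathbf{1}=0$, are needed for $\bar{\boldsymbol{f}}_i$ to be a consistent flux depending only on local states. We will present this flux-difference form, which combined with the surface terms gives the Lax--Wendroff conservation form, and obtain \cref{eq:conservation} as its telescoped sum.
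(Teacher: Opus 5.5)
Your derivation of \cref{eq:conservation} is the paper's proof. You contract with $\mathbf{1}^T P$, use $PD = Q$, symmetrize with the symmetry of $\boldsymbol{f}^{\mathrm{vol}}$ and $Q + Q^T = R^T B R$, and evaluate the surface term via $(R\mathbf{1})^T B = [-1,\,1]$. You are also right that this identity never uses $1 \in \mathcal{F}$. The paper's remark that $R\mathbf{1} = [1,1]^T$ holds because $1 \in \mathcal{F}$ is vacuous, since $R$ merely selects the first and last entries. The hypothesis is needed only for the consistency of the subcell fluxes in \cref{lem:telescoping}, on which the Lax--Wendroff claim rests.

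Your subcell-flux sketch, however, fails as written.
\begin{itemize}
\item With your formula, $\bar{\boldsymbol{f}}_1 = p_1 \vol_1$. Together with your choice $\bar{\boldsymbol{f}}_0 = \boldsymbol{f}(\boldsymbol{u}_1)$ this gives $\bar{\boldsymbol{f}}_1 - \bar{\boldsymbol{f}}_0 = p_1 \vol_1 - \boldsymbol{f}(\boldsymbol{u}_1) \neq p_1 \vol_1$, so the flux-difference form breaks at the first node.
\item For a constant state your flux is $\bar{\boldsymbol{f}}_i = 2\boldsymbol{f}(\boldsymbol{u}) \sum_{k \le i} (Q\mathbf{1})_k = 0$. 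Zero row sums therefore make it \emph{vanish} rather than reproduce $\boldsymbol{f}(\boldsymbol{u})$, the opposite of what you assert.
\item The column-sum identity $\mathbf{1}^T Q = \mathbf{e}_N^T - \mathbf{e}_1^T$ that you derive plays no role anywhere.
\end{itemize}

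The repair is to carry the offset, $\bar{\boldsymbol{f}}_i := \boldsymbol{f}(\boldsymbol{u}_1) + \sum_{k \le i} p_k \vol_k$.
\begin{itemize}
\item Splitting the inner sum at $n = i$ and symmetrizing the block $\{1,\dots,i\}^2$ with $Q + Q^T = R^T B R$ shows that this coincides with the paper's $\sum_{k \le i} \sum_{l > i} 2 Q_{kl}\, \boldsymbol{f}^{\mathrm{vol}}(\boldsymbol{u}_k, \boldsymbol{u}_l)$ for $1 \le i \le N-1$.
\item Your volume identity gives $\bar{\boldsymbol{f}}_N = \boldsymbol{f}(\boldsymbol{u}_N)$.
\item Consistency is then immediate: for constant states $\vol_k = 2\boldsymbol{f}(\boldsymbol{u})(D\mathbf{1})_k = 0$. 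This is shorter than the paper's evaluation of $\sum_{k \le i}\sum_{l > i} Q_{kl} = \tfrac{1}{2}$.
\item At nodes $1$ and $N$, the surface terms replace $\boldsymbol{f}(\boldsymbol{u}_1)$ and $\boldsymbol{f}(\boldsymbol{u}_N)$ by $\boldsymbol{f}^{\mathrm{surf}}_L$ and $\boldsymbol{f}^{\mathrm{surf}}_R$. Every nodal update then becomes a difference of consecutive consistent fluxes, which is the Lax--Wendroff conservation form.
\end{itemize}
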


\begin{proof}
	The proof from \cite{fisher2013discretely} for polynomial SBP operators carries over to non-polynomial FSBP operators in a straightforward manner. 
	For completeness, we provide an alternative proof in \cref{app:conservation}. 
\end{proof}

Two ingredients drive \cref{cor:conservation}---which are detailed in the proofs provided in \Cref{app:conservation}:
(i) The diagonal norm lets the contraction with $\mathbf{1}^T P$ act on the volume term entrywise. 
(ii) The inclusion of the constants, $1 \in \mathcal{F}$, gives $D\mathbf{1} = 0$ and makes the volume term telescope into a single difference of boundary fluxes.
The two-point flux enters only through its consistency and symmetry, so discrete conservation holds for \emph{any} admissible function space, with the polynomial case recovered as one instance.

\subsection{Entropy conservation on a single element}
\label{subsec:fd_entropy}

Recall that smooth solutions of \cref{eq:cons_law} obey the entropy identity $\partial_t S(\boldsymbol{u}) + \partial_x F(\boldsymbol{u}) = 0$, so the entropy changes only through the physical boundaries.
A discretely entropy-conservative scheme should mimic this balance.
The following result shows that the flux-differencing FSBP semi-discretization does so whenever the volume flux is entropy-conservative, with the total discrete entropy again changing only through the surface fluxes.

\begin{theorem}[Semi-discrete entropy conservation]\label{thm:entropy_cons}
	Let $D = P^{-1}Q$ be a diagonal-norm $\mathcal{F}$-exact FSBP operator with $1 \in \mathcal{F}$ and let $\boldsymbol{f}^{\mathrm{vol}}$ be an EC flux satisfying \cref{eq:EC_condition}.
	Then, the semi-discretization \cref{eq:fd_scheme} satisfies
	\begin{equation}\label{eq:entropy_balance}
		\frac{\d}{\d t}\, \mathbf{1}^T P\, S(\mathbf{u})
			= \left( \psi_N - \boldsymbol{w}_N^T \boldsymbol{f}^{\mathrm{surf}}_R \right)
				- \left( \psi_1 - \boldsymbol{w}_1^T \boldsymbol{f}^{\mathrm{surf}}_L \right),
	\end{equation}
	where $S(\mathbf{u}) = [S(\boldsymbol{u}_1), \dots, S(\boldsymbol{u}_N)]^T$ and $\boldsymbol{w}_i = \boldsymbol{w}(\boldsymbol{u}_i)$.
	In particular, the discrete entropy $\mathbf{1}^T P\, S(\mathbf{u})$ changes only through the surface fluxes.
	Moreover, if $\boldsymbol{f}^{\mathrm{surf}}_L = \boldsymbol{f}(\boldsymbol{u}_1)$ and $\boldsymbol{f}^{\mathrm{surf}}_R = \boldsymbol{f}(\boldsymbol{u}_N)$, then, since $\psi_i - \boldsymbol{w}_i^T \boldsymbol{f}(\boldsymbol{u}_i) = -F(\boldsymbol{u}_i)$ by \cref{eq:flux_potential}, \cref{eq:entropy_balance} reduces to the discrete entropy identity $\frac{\d}{\d t} \mathbf{1}^T P\, S(\mathbf{u}) = F(\boldsymbol{u}_1) - F(\boldsymbol{u}_N)$.
\end{theorem}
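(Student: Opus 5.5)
We contract the semi-discretization \cref{eq:fd_scheme} with the entropy variables. Since $P$ is diagonal, the chain rule gives
\[
\frac{\d}{\d t}\,\mathbf{1}^T P\, S(\mathbf{u}) = \sum_{i=1}^N p_i\, \boldsymbol{w}_i^T \frac{\d}{\d t}\boldsymbol{u}_i .
\]
We then substitute the right-hand side of \cref{eq:fd_scheme} and treat the volume and surface contributions separately. The surface part is immediate: $p_i (P^{-1}R^TB(R\mathbf{f}-\mathbf{f}^{\mathrm{surf}}))_i$ is nonzero only for $i=1,N$, so it contributes $\boldsymbol{w}_N^T(\boldsymbol{f}(\boldsymbol{u}_N)-\boldsymbol{f}^{\mathrm{surf}}_R) - \boldsymbol{w}_1^T(\boldsymbol{f}(\boldsymbol{u}_1)-\boldsymbol{f}^{\mathrm{surf}}_L)$.

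The main step is the volume term $-\sum_{i,n} 2 Q_{i,n}\, \boldsymbol{w}_i^T \boldsymbol{f}^{\mathrm{vol}}_{i,n}$, where we use $p_i D_{i,n} = Q_{i,n}$ (this uses the diagonal norm) and abbreviate $\boldsymbol{f}^{\mathrm{vol}}_{i,n} := \boldsymbol{f}^{\mathrm{vol}}(\boldsymbol{u}_i,\boldsymbol{u}_n)$.

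\textbf{Splitting $Q$.} Write $2Q = (Q - Q^T) + R^TBR$ by property (iii). The boundary part $R^TBR=\diag(-1,0,\dots,0,1)$ is diagonal, so by consistency $\boldsymbol{f}^{\mathrm{vol}}_{i,i}=\boldsymbol{f}(\boldsymbol{u}_i)$ it contributes $-\boldsymbol{w}_N^T\boldsymbol{f}(\boldsymbol{u}_N)+\boldsymbol{w}_1^T\boldsymbol{f}(\boldsymbol{u}_1)$. This exactly cancels the $R\mathbf{f}$ pieces of the surface term.

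\textbf{Skew-symmetric part.} For $-\sum_{i,n}(Q_{i,n}-Q_{n,i})\boldsymbol{w}_i^T\boldsymbol{f}^{\mathrm{vol}}_{i,n}$, we swap indices in the second sum and use symmetry of $\boldsymbol{f}^{\mathrm{vol}}$ to get $-\sum_{i,n} Q_{i,n}(\boldsymbol{w}_i-\boldsymbol{w}_n)^T\boldsymbol{f}^{\mathrm{vol}}_{i,n}$. The EC condition \cref{eq:EC_condition} turns this into $-\sum_{i,n}Q_{i,n}(\psi_i-\psi_n)$.

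\textbf{Reducing the $\psi$ sum.} Here the assumption $1\in\mathcal{F}$ enters. By \cref{lem:constants}, $D\mathbf{1}=0$, hence $Q\mathbf{1}=PD\mathbf{1}=0$, so $\sum_n Q_{i,n}\psi_i = 0$. The remaining sum is
\[
\sum_{i,n}Q_{i,n}\psi_n=\mathbf{1}^TQ\boldsymbol{\psi}.
\]
Using $Q^T = R^TBR - Q$ once more, $\mathbf{1}^TQ\boldsymbol{\psi} = \boldsymbol{\psi}^T Q^T\mathbf{1} = \boldsymbol{\psi}^TR^TBR\mathbf{1} - \boldsymbol{\psi}^TQ\mathbf{1} = \psi_N-\psi_1$.

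\textbf{Assembly.} Collecting the terms, the volume contribution is $\psi_N-\psi_1 - \boldsymbol{w}_N^T\boldsymbol{f}(\boldsymbol{u}_N) + \boldsymbol{w}_1^T\boldsymbol{f}(\boldsymbol{u}_1)$. Adding the surface contribution yields \cref{eq:entropy_balance}. The special case $\boldsymbol{f}^{\mathrm{surf}}_L = \boldsymbol{f}(\boldsymbol{u}_1)$, $\boldsymbol{f}^{\mathrm{surf}}_R = \boldsymbol{f}(\boldsymbol{u}_N)$ then follows directly from \cref{eq:flux_potential}.

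The only delicate points are the index swap combined with the symmetry of $\boldsymbol{f}^{\mathrm{vol}}$ for systems, and where $1\in\mathcal{F}$ is needed. The latter is the one genuine departure from the polynomial case, where $D\mathbf{1}=0$ holds automatically; here it must come from \cref{lem:constants}. Everything else is sign bookkeeping.
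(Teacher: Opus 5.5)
Your proposal is correct and follows essentially the same route as the paper. The paper isolates the volume contribution as \cref{lem:vol_entropy}, splitting $2Q_{ij} = (R^TBR)_{ij} + (Q_{ij}-Q_{ji})$, using symmetry of $\boldsymbol{f}^{\mathrm{vol}}$ and the EC condition to reach $\sum_{i,j}Q_{ij}(\psi_i-\psi_j)$, and finishing with $Q\mathbf{1}=0$ and $\mathbf{1}^TQ=\mathbf{1}^TR^TBR$, exactly as you do inline. The only difference is organizational: you cancel the $R^TBR$ part directly against the $R\mathbf{f}$ surface pieces, whereas the paper first collapses the volume term to $F(\boldsymbol{u}_N)-F(\boldsymbol{u}_1)$ and then combines it with the surface terms.
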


\begin{proof}
	See \cref{app:entropy}.
\end{proof}

The mechanism behind \cref{eq:entropy_balance} is a discrete chain rule (see \cref{lem:vol_entropy} in \cref{app:entropy}): 
Contracted with the entropy variables, the volume term telescopes into the boundary difference $F(\boldsymbol{u}_N) - F(\boldsymbol{u}_1)$ of the entropy flux, mirroring the continuous identity $\boldsymbol{w}^T \partial_x \boldsymbol{f} = \partial_x F$.
This requires the entropy-conservative condition \cref{eq:EC_condition} on the two-point flux together with $1 \in \mathcal{F}$.
Whether the total entropy is conserved or dissipated is then governed entirely by the surface fluxes. 
We build upon this in \Cref{subsec:fd_multi} to couple elements in an entropy-stable manner.

\subsection{Entropy stability and element coupling}
\label{subsec:fd_multi}

We now partition the domain $I = [x_L,x_R]$ into $K$ elements $I_k = [x_{L,k}, x_{R,k}]$, $k=1,\dots,K$, with $x_{L,1} = x_L$ and $x_{R,K} = x_R$. 
The elements are coupled through a shared interface flux.
On each element $I_k$, the solution is approximated by an array $\mathbf{u}_k \in \R^{N_k \times m}$ with rows $\boldsymbol{u}_{k,i} = \boldsymbol{u}(x_{k,i}) \in \R^m$, associated with $N_k$ grid points $\{x_{k,i}\}_{i=1}^{N_k}$.
We again assume that each local grid contains the element endpoints, i.e., $x_{k,L} = x_{k,1} < \dots < x_{k,N_k} = x_{k,R}$.
Let $(D_k, P_k, Q_k, R_k)$ define an $\mathcal{F}_k$-exact FSBP operator as in \cref{def:FSBP} on $I_k$.
Notably, the function space $\mathcal{F}_k$ might vary from element to element, and so can the grid points, their number, and the FSBP operators $(D_k, P_k, Q_k, R_k)$.

The resulting multi-element flux-differencing semi-discretization then reads
\begin{equation}\label{eq:fd_scheme_multi}
	\frac{\d}{\d t} \mathbf{u}_k
        = - \vol_k
        + P_k^{-1} R_k^T B (R_k \mathbf{f}_k - \mathbf{f}_k^{\mathrm{surf}}), \quad
    \vol_{k,i} = \sum_{n=1}^{N_k} 2 (D_k)_{i,n}\, \boldsymbol{f}^{\mathrm{vol}}( \boldsymbol{u}_{k,i}, \boldsymbol{u}_{k,n} ),
\end{equation}
where $\mathbf{f}_k = [\boldsymbol{f}(\boldsymbol{u}_{k,1}), \dots, \boldsymbol{f}(\boldsymbol{u}_{k,N_k})]^T$ is the nodal flux array on $I_k$ and $\mathbf{f}_k^{\mathrm{surf}}$ collects the surface fluxes at the two endpoints of $I_k$.
The surface fluxes now couple the element $I_k$ to its neighbors $I_{k-1}$ and $I_{k+1}$ for $k \in \{2,\dots,K-1\}$ and to the external boundary data for $k \in \{1,K\}$.

At an interface between $I_k$ and $I_{k+1}$, we denote the left state $\boldsymbol{u}_{k,R}$ from $I_k$ as $\boldsymbol{u}_{k+\frac{1}{2}}^-$ and the right state $\boldsymbol{u}_{k+1,L}$ from $I_{k+1}$ as $\boldsymbol{u}_{k+\frac{1}{2}}^+$.
The surface flux of the left and of the right element is the same two-point flux, i.e., $\boldsymbol{f}^{\mathrm{surf}}_{k,R} = \boldsymbol{f}^{\mathrm{surf}}_{k+1,L} = \boldsymbol{f}^{\mathrm{surf}}\big(\boldsymbol{u}_{k+\frac{1}{2}}^-, \boldsymbol{u}_{k+\frac{1}{2}}^+ \big)$, which we denote more briefly as $\boldsymbol{f}^{\mathrm{surf}}_{k+\frac{1}{2}}$.
Summing the single-element balance \cref{eq:entropy_balance} over all elements, the interior interface between $I_k$ and $I_{k+1}$ contributes
\begin{equation}\label{eq:interface_production}
\begin{aligned}
	\sigma_{k+\frac{1}{2}}
		& = \left( \psi_{k+\frac{1}{2}}^- - (\boldsymbol{w}_{k+\frac{1}{2}}^-)^T \boldsymbol{f}^{\mathrm{surf}}_{k+\frac{1}{2}} \right)
			- \left( \psi_{k+\frac{1}{2}}^+ - (\boldsymbol{w}_{k+\frac{1}{2}}^+)^T \boldsymbol{f}^{\mathrm{surf}}_{k+\frac{1}{2}} \right) \\
		& = \left( \psi_{k+\frac{1}{2}}^- - \psi_{k+\frac{1}{2}}^+ \right)
			- \big( \boldsymbol{w}_{k+\frac{1}{2}}^- - \boldsymbol{w}_{k+\frac{1}{2}}^+ \big)^T \boldsymbol{f}^{\mathrm{surf}}_{k+\frac{1}{2}},
\end{aligned}
\end{equation}
with $\boldsymbol{w}_{k+\frac{1}{2}}^{\pm} = \boldsymbol{w}\big( \boldsymbol{u}_{k+\frac{1}{2}}^{\pm} \big)$ and $\psi_{k+\frac{1}{2}}^{\pm} = \psi\big( \boldsymbol{u}_{k+\frac{1}{2}}^{\pm} \big)$.
We are now positioned to show entropy conservation/entropy of the multi-element flux-differencing semi-discretization \cref{eq:fd_scheme_multi}.

\begin{theorem}[Entropy conservation/stability of the coupled scheme]\label{thm:coupled}
	Consider the flux-differencing semi-discretization \cref{eq:fd_scheme} on a partition of the domain into elements, each discretized by a diagonal-norm $\mathcal{F}_k$-exact FSBP operator with $1 \in \mathcal{F}_k$, coupled through a common interface flux $\boldsymbol{f}^{\mathrm{surf}}$.
	\begin{enumerate}
		\item[(i)] If the surface flux $\boldsymbol{f}^{\mathrm{surf}}$ is entropy-conservative, i.e., it satisfies \cref{eq:EC_condition}, then every interior interface satisfies $\sigma_{k+\frac{1}{2}} = 0$.
		In this case, the total entropy $\sum_{k} \mathbf{1}^T P_k\, S(\mathbf{u}_k)$ changes only through the physical boundaries.

		\item[(ii)] If the surface flux $\boldsymbol{f}^{\mathrm{surf}}$ is entropy stable, i.e., of the form \cref{eq:ES_flux}, then every interior interface satisfies $\sigma_{k+\frac{1}{2}} = -\frac{1}{2} \big( \boldsymbol{w}_{k+\frac{1}{2}}^+ - \boldsymbol{w}_{k+\frac{1}{2}}^- \big)^T \Lambda \big( \boldsymbol{w}_{k+\frac{1}{2}}^+ - \boldsymbol{w}_{k+\frac{1}{2}}^- \big) \le 0$.
		In this case, the total entropy is bounded by the physical boundary fluxes.
	\end{enumerate}
\end{theorem}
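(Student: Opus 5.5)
The plan is to sum the single-element balance \cref{eq:entropy_balance} of \cref{thm:entropy_cons} over all elements $k=1,\dots,K$. This is legitimate because each $D_k$ is a diagonal-norm $\mathcal{F}_k$-exact FSBP operator with $1 \in \mathcal{F}_k$, and because the volume flux is entropy-conservative on every element. The total entropy rate $\frac{\d}{\d t}\sum_k \mathbf{1}^T P_k S(\mathbf{u}_k)$ then splits into three groups of terms:
\begin{itemize}
	\item a left physical boundary term from $k=1$, namely $-(\psi_{1,1} - \boldsymbol{w}_{1,1}^T \boldsymbol{f}^{\mathrm{surf}}_{1,L})$;
	\item a right physical boundary term from $k=K$, namely $\psi_{K,N_K} - \boldsymbol{w}_{K,N_K}^T \boldsymbol{f}^{\mathrm{surf}}_{K,R}$;
	\item for each interior interface $k+\frac12$, the right-endpoint contribution of $I_k$ plus the left-endpoint contribution of $I_{k+1}$.
\end{itemize}
Since both elements use the common flux $\boldsymbol{f}^{\mathrm{surf}}_{k+\frac12}$, these two endpoint contributions combine exactly into $\sigma_{k+\frac12}$ as written in \cref{eq:interface_production}. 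This regrouping is pure bookkeeping, but I will verify the signs carefully: the right endpoint enters with $+$ and the left endpoint with $-$ in \cref{eq:entropy_balance}.

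For (i), I take $\boldsymbol{f}^{\mathrm{surf}} = \boldsymbol{f}_{\mathrm{ec}}$ and apply \cref{eq:EC_condition} with $\boldsymbol{u}_l = \boldsymbol{u}^-_{k+\frac12}$ and $\boldsymbol{u}_k = \boldsymbol{u}^+_{k+\frac12}$. This gives $(\boldsymbol{w}^- - \boldsymbol{w}^+)^T \boldsymbol{f}^{\mathrm{surf}}_{k+\frac12} = \psi^- - \psi^+$. Substituting into the second line of \cref{eq:interface_production} yields $\sigma_{k+\frac12}=0$. Only the two physical boundary terms then survive, which is the claim.

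For (ii), I write $\boldsymbol{f}^{\mathrm{surf}} = \boldsymbol{f}_{\mathrm{es}}$ as in \cref{eq:ES_flux}, with $\boldsymbol{u}_l=\boldsymbol{u}^-$ and $\boldsymbol{u}_k=\boldsymbol{u}^+$. Inserting this into \cref{eq:interface_production}, the EC part cancels $\psi^- - \psi^+$ exactly as in (i). The remaining piece is
\[
\tfrac12 (\boldsymbol{w}^- - \boldsymbol{w}^+)^T \Lambda (\boldsymbol{w}^+ - \boldsymbol{w}^-) = -\tfrac12 (\boldsymbol{w}^+ - \boldsymbol{w}^-)^T \Lambda (\boldsymbol{w}^+ - \boldsymbol{w}^-),
\]
and this is $\le 0$ because $\Lambda$ is symmetric positive semidefinite. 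Equivalently, one may read the result off \cref{eq:ES_inequality}. Summing over the interfaces shows that the total entropy rate is bounded above by the physical boundary terms.

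I expect no real obstacle. The only delicate point is the orientation bookkeeping: matching the argument order $(\boldsymbol{u}^-,\boldsymbol{u}^+)$ of the surface flux with the roles of $\boldsymbol{u}_l,\boldsymbol{u}_k$ in \cref{eq:EC_condition} and \cref{eq:ES_flux}, so that the dissipation term appears with a negative sign. The rest follows from \cref{thm:entropy_cons}, whose hypotheses hold elementwise regardless of how $\mathcal{F}_k$ varies between elements. The reason is that interface coupling only involves the endpoint values picked out by $R_k$.
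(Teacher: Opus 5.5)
Your proposal is correct and follows essentially the same route as the paper: evaluate $\sigma_{k+\frac{1}{2}}$ from \cref{eq:interface_production} using the EC condition \cref{eq:EC_condition} with $(l,k)=(-,+)$ for (i), insert \cref{eq:ES_flux} (equivalently, invoke \cref{eq:ES_inequality}) for (ii), and sum the single-element balances of \cref{thm:entropy_cons} over all elements. Your orientation and sign bookkeeping, including the negative sign of the dissipation term, is correct.
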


\begin{proof}
	For (i), the EC condition \cref{eq:EC_condition} applied to $\boldsymbol{f}^{\mathrm{surf}}_{k+\frac{1}{2}}$ at the states $\boldsymbol{u}_{k+\frac{1}{2}}^-, \boldsymbol{u}_{k+\frac{1}{2}}^+$ gives
	\begin{equation}
		\big( \boldsymbol{w}_{k+\frac{1}{2}}^- - \boldsymbol{w}_{k+\frac{1}{2}}^+ \big)^T \boldsymbol{f}^{\mathrm{surf}}_{k+\frac{1}{2}} = \psi_{k+\frac{1}{2}}^- - \psi_{k+\frac{1}{2}}^+,
	\end{equation}
	so \cref{eq:interface_production} vanishes.
	For (ii), inserting the entropy-stable $\boldsymbol{f}^{\mathrm{surf}}$ from \cref{eq:ES_flux} into \cref{eq:interface_production} and using \cref{eq:ES_inequality} with $(l, k) = (-, +)$ yields
	\begin{equation}
		\sigma_{k+\frac{1}{2}}
			= - \frac{1}{2} \big( \boldsymbol{w}_{k+\frac{1}{2}}^+ - \boldsymbol{w}_{k+\frac{1}{2}}^- \big)^T \Lambda \big( \boldsymbol{w}_{k+\frac{1}{2}}^+ - \boldsymbol{w}_{k+\frac{1}{2}}^- \big) \\
			\le 0,
	\end{equation}
	since $\Lambda$ is positive semidefinite.
	Summing over all interfaces and invoking \cref{thm:entropy_cons} on each element gives the claimed bounds.
\end{proof} 
\section{Accuracy of FSBP operators}
\label{sec:accuracy}

We now investigate the accuracy of FSBP operators and of the flux-differencing semi-discretization \cref{eq:fd_scheme}.
While the conservation and entropy-stability results of \Cref{sec:fluxDiff} carry over from the polynomial setting with only mild modifications, the accuracy of FSBP operators has not yet been systematically analyzed in the literature.
We characterize when an FSBP operator has \emph{design order} $p$, link this order to the approximation power of the function space $\mathcal{F}$, and show that the flux-differencing volume term retains it.

\begin{definition}[Design order of FSBP operators]
	We say that an FSBP operator $D$ has \emph{design order} $p$ if $(D \mathbf{g})_i = g'(x_i) + \mathcal{O}(h^p)$ for every sufficiently smooth scalar function $g$ with nodal-value vector $\mathbf{g}$.
	Here, $h = \max_{i = 1, \dots, N-1} | x_{i+1} - x_i |$ is the \emph{grid spacing}, i.e., the largest distance between two neighboring nodes of the grid $x_1 < \dots < x_N$ on which $D$ operates.
	The statement $\mathcal{O}(h^p)$ is understood asymptotically, i.e., for a sequence of grids and associated FSBP operators with grid spacing $h \to 0$.
\end{definition}

By construction, an $\mathcal{F}$-exact FSBP operator differentiates its function space exactly, i.e., $D\mathbf{g} = \mathbf{g}'$ for all $g \in \mathcal{F}$ (\cref{def:FSBP}).
The design order of an $\mathcal{F}$-exact FSBP operator thus reflects how well $\mathcal{F}$ approximates a sufficiently smooth scalar function $g$ on the grid.

Throughout this section, we work in the multi-element setting of \Cref{subsec:fd_multi}: the domain is partitioned into elements $I_k$ of size $h_k \leq h$, and each element carries its own function space $\mathcal{F}_k$, grid, and $\mathcal{F}_k$-exact FSBP operator $D_k$, with a uniformly bounded number of nodes per element.
We study the limit $h \to 0$; the element sizes and the local grid spacings then agree up to constant factors.
We call such a family of FSBP operators \emph{norm-bounded} if $\| D_k \|_\infty \leq C_D h_k^{-1}$ with a constant $C_D$ independent of the element and of $h$.
Here, $\| D \|_\infty = \max_{i = 1, \dots, N} \sum_{j = 1}^{N} | D_{ij} |$ denotes the matrix norm induced by the maximum vector norm.
A family of FSBP operators is norm-bounded, for instance, whenever the operators on all elements are obtained from finitely many fixed operators on a reference element by affine maps.
Since the analysis below is local to a single element, we henceforth consider a fixed but arbitrary element and suppress the element index $k$; all assumptions and estimates are understood to hold uniformly over the elements.
All suprema are taken over the element under consideration, and $\bar{x}$ denotes a fixed expansion point in that element.

Our starting point is the observation that the truncation error of an FSBP operator is controlled by how well $\mathcal{F}$ \emph{simultaneously} approximates a function and its derivative.

\begin{lemma}[Truncation error bound]\label{lem:truncation}
	Let $D$ be an $\mathcal{F}$-exact FSBP operator and let $g \in C^1$ have the nodal-value vector $\mathbf{g}$.
	Then, 
	\begin{equation}\label{eq:truncation_bound}
		\left| (D \mathbf{g})_i - g'(x_i) \right|
			\leq \inf_{f \in \mathcal{F}} \left( \| D \|_\infty \| g - f \|_{L^\infty} + \| g' - f' \|_{L^\infty} \right)
	\end{equation}
	holds for $i = 1, \dots, N$.
\end{lemma}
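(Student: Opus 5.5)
Fix an arbitrary $f \in \mathcal{F}$ and write $g = f + (g - f)$. The plan is to use exactness of $D$ on $\mathcal{F}$ (property (i) of \cref{def:FSBP}) to cancel the $f$-part exactly. This leaves only the error $e := g - f$, which we then bound by the triangle inequality. Since $f$ is arbitrary, taking the infimum at the end yields \cref{eq:truncation_bound}.

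Concretely, let $\mathbf{f}$ and $\mathbf{e}$ denote the nodal-value vectors of $f$ and $e$, so that $\mathbf{g} = \mathbf{f} + \mathbf{e}$ by linearity of the nodal-value map. Since $D\mathbf{f} = \mathbf{f}'$ by (i) and $D$ is linear, we obtain
\begin{equation*}
	(D\mathbf{g})_i - g'(x_i) = (D\mathbf{e})_i + f'(x_i) - g'(x_i) = (D\mathbf{e})_i - e'(x_i).
\end{equation*}
The triangle inequality then gives $|(D\mathbf{g})_i - g'(x_i)| \le |(D\mathbf{e})_i| + |e'(x_i)|$.

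For the first term, we use the definition of the induced maximum norm:
\begin{equation*}
	|(D\mathbf{e})_i| \le \|D\mathbf{e}\|_\infty \le \|D\|_\infty \max_j |e(x_j)| \le \|D\|_\infty \|g - f\|_{L^\infty}.
\end{equation*}
The last step holds because the nodes lie in the element on which the $L^\infty$ norm is taken. For the second term, $|e'(x_i)| \le \|g' - f'\|_{L^\infty}$. Here the assumptions $g \in C^1$ and $\mathcal{F} \subset C^1$ ensure that $e'$ is continuous and bounded on the compact element, so both suprema are finite.

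Combining the two bounds gives the estimate for every $f \in \mathcal{F}$, and taking the infimum over $f$ completes the argument. There is no real obstacle here. The only point requiring care is to note that the $L^\infty$ norms are taken over the element, which contains all nodes $x_j$, so that nodal maxima are dominated by the corresponding suprema.
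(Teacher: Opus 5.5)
Your proposal is correct and matches the paper's proof: you use exactness to cancel $f \in \mathcal{F}$, bound $|(D(\mathbf{g}-\mathbf{f}))_i|$ by $\|D\|_\infty \max_j |(g-f)(x_j)| \le \|D\|_\infty \|g-f\|_{L^\infty}$, bound the derivative term by $\|g'-f'\|_{L^\infty}$, and take the infimum over $f$.
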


\begin{proof}
	Let $f \in \mathcal{F}$ with nodal-value vector $\mathbf{f}$.
	Exactness gives $D \mathbf{f} = \mathbf{f}'$, and therefore $(D \mathbf{g})_i - g'(x_i) = (D (\mathbf{g} - \mathbf{f}))_i - (g - f)'(x_i)$.
	The assertion follows from $|(D (\mathbf{g} - \mathbf{f}))_i| \leq \| D \|_\infty \max_j |(g - f)(x_j)| \leq \| D \|_\infty \| g - f \|_{L^\infty}$ and taking the infimum over $f \in \mathcal{F}$.
\end{proof}

For a norm-bounded family of $\mathcal{F}$-exact FSBP operators, \cref{lem:truncation} yields design order $p$ as soon as $\mathcal{F}$ approximates every sufficiently smooth function to $\mathcal{O}(h^{p+1})$ in the maximum norm and its derivative to $\mathcal{O}(h^{p})$.
By Taylor expansion, it suffices to check this on polynomials.

\begin{lemma}[Reduction to polynomials]\label{lem:poly_reduction}
	A norm-bounded family of $\mathcal{F}$-exact FSBP operators has design order $p$ if and only if it differentiates every polynomial of degree at most $p$ with truncation error $\mathcal{O}(h^p)$.
\end{lemma}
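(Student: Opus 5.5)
The forward implication is immediate from the definition of design order. If the family has design order $p$, then $(D\mathbf{g})_i = g'(x_i) + \mathcal{O}(h^p)$ holds for every sufficiently smooth $g$. Polynomials of degree at most $p$ are smooth, so they are included.

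For the converse, we fix an element, a sufficiently smooth $g$ (say $g \in C^{p+1}$ with bounded derivatives up to order $p+1$), and the expansion point $\bar{x}$ in that element. We split
\begin{equation*}
	g = T_p + r, \qquad T_p(x) = \sum_{j=0}^{p} \frac{g^{(j)}(\bar{x})}{j!}(x-\bar{x})^j .
\end{equation*}
Linearity of $D$ and of differentiation gives
\begin{equation*}
	(D\mathbf{g})_i - g'(x_i) = \big[(D\mathbf{T}_p)_i - T_p'(x_i)\big] + \big[(D\mathbf{r})_i - r'(x_i)\big].
\end{equation*}
The first bracket is $\mathcal{O}(h^p)$ by hypothesis, since $T_p$ is a polynomial of degree at most $p$.

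For the second bracket we do not invoke exactness. We use the same bound as in \cref{lem:truncation} with $f = 0$:
\begin{equation*}
	|(D\mathbf{r})_i - r'(x_i)| \le \|D\|_\infty \|r\|_{L^\infty} + \|r'\|_{L^\infty}.
\end{equation*}
Taylor's theorem on the element, whose diameter is at most $C h$, gives $\|r\|_{L^\infty} \le C \sup|g^{(p+1)}|\, h^{p+1}$. Since $r' = g' - T_p'$ is the remainder of the degree-$(p-1)$ Taylor polynomial of $g'$, we also get $\|r'\|_{L^\infty} \le C \sup|g^{(p+1)}|\, h^{p}$. Norm-boundedness, $\|D\|_\infty \le C_D h^{-1}$ with $h_k \le h$ and element sizes comparable to $h$, then bounds the second bracket by $\mathcal{O}(h^p)$, uniformly over elements.

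The one delicate point is uniformity in the hypothesis. The $\mathcal{O}(h^p)$ for the polynomial part must hold with constants depending only on the coefficients of $T_p$, which themselves depend on $\bar{x}$ through $g^{(j)}(\bar{x})$. I would handle this by applying the hypothesis to the finitely many monomials $(x-\bar{x})^j$, $j=0,\dots,p$, or, equivalently, to a fixed basis of $\mathcal{P}_p$. Each has an $\mathcal{O}(h^p)$ error, and the coefficients $g^{(j)}(\bar{x})/j!$ are bounded by $\|g\|_{C^p}$. The polynomial error is thus a bounded linear combination of $\mathcal{O}(h^p)$ terms. This presumes the hypothesis is read uniformly over the choice of expansion point; for the shifted monomials this is natural, as it is for a translation-invariant setting such as affine-mapped reference operators. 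Given that, combining the two brackets yields design order $p$.
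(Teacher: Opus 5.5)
Your proof is correct and follows the paper's argument: necessity because polynomials are smooth, and sufficiency via the degree-$p$ Taylor split $g = \pi + r$. The polynomial part is handled by the hypothesis, and the remainder is bounded exactly as in \cref{lem:truncation}, using $\|D\|_\infty \le C_D h^{-1}$ together with $\|r\|_{L^\infty} = \mathcal{O}(h^{p+1})$ and $\|r'\|_{L^\infty} = \mathcal{O}(h^{p})$. Your explicit treatment of uniformity refines the paper's brief remark that the Taylor coefficients are bounded independently of $h$. Applying the hypothesis to the fixed global monomials $1, x, \dots, x^p$ instead of the shifted ones removes the dependence on the expansion point altogether.
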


\begin{proof}
	Necessity is clear since polynomials are smooth.
	For sufficiency, let $g$ be sufficiently smooth and decompose it as $g = \pi + r$, where $\pi \in \mathcal{P}_p$ is the Taylor polynomial of $g$ of degree $p$ around $\bar{x}$ and the remainder $r$ satisfies $\| r \|_{L^\infty} = \mathcal{O}(h^{p+1})$ and $\| r' \|_{L^\infty} = \mathcal{O}(h^{p})$.
	The truncation error of $\pi$ is $\mathcal{O}(h^p)$ by assumption, since the Taylor coefficients of $g$ are bounded independently of $h$.
	For the remainder, arguing as in the proof of \cref{lem:truncation}, $|(D \mathbf{r})_i - r'(x_i)| \leq \| D \|_\infty \| r \|_{L^\infty} + \| r' \|_{L^\infty} \leq C_D h^{-1} \mathcal{O}(h^{p+1}) + \mathcal{O}(h^p) = \mathcal{O}(h^p)$.
	Combining both estimates by linearity proves the claim.
\end{proof}

It remains to determine when $\mathcal{F}$ reproduces polynomials to the required order.
The following result provides an algebraic criterion in terms of the Taylor polynomials of a basis of $\mathcal{F}$.
It is the FSBP analogue of the criterion of Yuan and Shu \cite{yuan2006discontinuous} for DG methods based on non-polynomial approximation spaces, in the Wronskian form of Yang and Yoon \cite{yang2016short}.

\begin{proposition}[Design order and the function space]\label{prop:design_order}
	Let $\mathcal{F} = \mathrm{span}\{ v_0, \dots, v_k \}$ with $v_0, \dots, v_k \in C^{k+1}$, and suppose that the Wronskian matrix
	\begin{equation}\label{eq:wronskian}
		W(\bar{x}) := \left[ v_{\ell}^{(n)}(\bar{x}) \right]_{n, \ell = 0}^{k} \in \R^{(k+1) \times (k+1)}
	\end{equation}
	is invertible, with $\| W(\bar{x})^{-1} \|_\infty$ and $\max_n \| v_n^{(k+1)} \|_{L^\infty}$ bounded independently of $h$.
	Then the following holds:
	\begin{enumerate}
		\item[(i)] For every $\pi \in \mathcal{P}_k$, there exists an $f \in \mathcal{F}$ with $\| \pi - f \|_{L^\infty} = \mathcal{O}(h^{k+1})$ and $\| \pi' - f' \|_{L^\infty} = \mathcal{O}(h^{k})$.
		\item[(ii)] Every norm-bounded family of $\mathcal{F}$-exact FSBP operators has design order $p = k = \dim \mathcal{F} - 1$---the same design order as for the polynomial space $\mathcal{P}_k$ of equal dimension.
	\end{enumerate}
\end{proposition}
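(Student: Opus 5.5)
The plan is to construct, for a given $\pi \in \mathcal{P}_k$, an element $f \in \mathcal{F}$ whose derivatives at $\bar{x}$ match those of $\pi$ up to order $k$. Part (i) then follows from Taylor's theorem with remainder, and part (ii) follows by combining (i) with \cref{lem:truncation} and \cref{lem:poly_reduction}.

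\textbf{Step 1 (Hermite matching at $\bar{x}$).} Write $f = \sum_{\ell=0}^k c_\ell v_\ell$. Collect the derivative data of $\pi$ in the vector $\boldsymbol{\pi} := [\pi^{(n)}(\bar{x})]_{n=0}^k$. The conditions $f^{(n)}(\bar{x}) = \pi^{(n)}(\bar{x})$ for $n=0,\dots,k$ form exactly the linear system $W(\bar{x})\,\mathbf{c} = \boldsymbol{\pi}$. By assumption $W(\bar{x})$ is invertible, so this system has a unique solution with $\|\mathbf{c}\|_\infty \le \|W(\bar{x})^{-1}\|_\infty \|\boldsymbol{\pi}\|_\infty$. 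We need the coefficients of $\pi$, and hence $\boldsymbol{\pi}$, to be bounded independently of $h$. This holds in the setting where (i) is used, because $\pi$ is a Taylor polynomial of a fixed smooth $g$, as in the proof of \cref{lem:poly_reduction}. The coefficient vector $\mathbf{c}$ is therefore uniformly bounded.

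\textbf{Step 2 (Taylor remainder).} Set $e := \pi - f$. Then $e \in C^{k+1}$ and $e^{(n)}(\bar{x}) = 0$ for $n \le k$. Moreover $e^{(k+1)} = -\sum_\ell c_\ell v_\ell^{(k+1)}$, since $\pi^{(k+1)} = 0$. This gives
$$\|e^{(k+1)}\|_{L^\infty} \le (k+1)\|\mathbf{c}\|_\infty \max_\ell \|v_\ell^{(k+1)}\|_{L^\infty},$$
which is bounded independently of $h$. Taylor's theorem with Lagrange remainder on the element, where $|x-\bar{x}| \le h$, yields $\|e\|_{L^\infty} \le \|e^{(k+1)}\|_{L^\infty} h^{k+1}/(k+1)!$. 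Applying the same argument to $e'$, which vanishes at $\bar{x}$ to order $k-1$ and has derivative of order $k$ equal to $e^{(k+1)}$, gives $\|e'\|_{L^\infty} \le \|e^{(k+1)}\|_{L^\infty} h^k/k!$. This proves (i).

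\textbf{Step 3 (design order).} Take $\pi \in \mathcal{P}_k$ and the $f$ from (i). By \cref{lem:truncation} and norm-boundedness,
$$|(D\boldsymbol{\pi})_i - \pi'(x_i)| \le C_D h^{-1}\,\mathcal{O}(h^{k+1}) + \mathcal{O}(h^k) = \mathcal{O}(h^k).$$
Here $\boldsymbol{\pi}$ denotes the nodal vector of $\pi$. \Cref{lem:poly_reduction} with $p=k$ then gives design order $k$. The dimension count $\dim\mathcal{F} = k+1$ follows from the invertibility of $W$, which forces the $v_\ell$ to be linearly independent. For comparison, $\mathcal{P}_k$ itself has an invertible Wronskian at any $\bar{x}$ and yields the same order.

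\textbf{Main obstacle.} The delicate point is uniformity in $h$. The hypotheses bound $\|W(\bar{x})^{-1}\|_\infty$ and the $(k+1)$st derivatives of the basis, but the constants in (i) also depend on the size of $\pi$. In Step 1 I will therefore state (i) with a constant proportional to $\|\boldsymbol{\pi}\|_\infty$. I will then note that in the application via \cref{lem:poly_reduction} the vector $\boldsymbol{\pi} = [g^{(n)}(\bar{x})]_n$ is bounded by norms of $g$. No unbounded factor enters, because the matching is done pointwise at $\bar{x}$ rather than through an $h$-scaled basis.
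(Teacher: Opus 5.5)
Your proof is correct and is essentially the paper's argument. Solving $W(\bar{x})\mathbf{c} = [\pi^{(n)}(\bar{x})]_{n=0}^{k}$ is the same linear system as the paper's expansion $\pi = \sum_n c_n T^k v_n$ in the degree-$k$ Taylor polynomials of the basis, your single Taylor-remainder estimate for $e = \pi - f$ is equivalent to the paper's termwise bounds on $v_n - T^k v_n$, and (ii) then follows by the same appeal to \cref{lem:truncation} and \cref{lem:poly_reduction}, with your explicit bound $\|\mathbf{c}\|_\infty \le \|W(\bar{x})^{-1}\|_\infty \|\boldsymbol{\pi}\|_\infty$ and the linear-independence argument for $\dim\mathcal{F} = k+1$ making precise two points the paper leaves implicit.
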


\begin{proof}
	(i) For $n = 0, \dots, k$, let $(T^k v_n)(x) := \sum_{\ell=0}^{k} \frac{v_n^{(\ell)}(\bar{x})}{\ell!} (x - \bar{x})^\ell$ denote the degree-$k$ Taylor polynomial of $v_n$ around $\bar{x}$.
	Invertibility of $W(\bar{x})$ means precisely that $T^k v_0, \dots, T^k v_k$ form a basis of $\mathcal{P}_k$.
	Hence, given $\pi \in \mathcal{P}_k$, we can write $\pi = \sum_{n=0}^{k} c_n\, T^k v_n$ with coefficients $c_n$ that are bounded in terms of $\| W(\bar{x})^{-1} \|_\infty$ and the coefficients of $\pi$, uniformly in $h$.
	For $f := \sum_{n=0}^{k} c_n v_n \in \mathcal{F}$, we obtain $\pi - f = -\sum_{n=0}^{k} c_n ( v_n - T^k v_n )$.
	The assertion now follows from the Taylor remainder bounds $\| v_n - T^k v_n \|_{L^\infty} \leq \frac{h^{k+1}}{(k+1)!} \| v_n^{(k+1)} \|_{L^\infty}$ and $\| (v_n - T^k v_n)' \|_{L^\infty} \leq \frac{h^{k}}{k!} \| v_n^{(k+1)} \|_{L^\infty}$, where the latter uses $(T^k v_n)' = T^{k-1} (v_n')$.
	(ii) Combining (i) with \cref{lem:truncation} shows that every polynomial of degree at most $k$ is differentiated with truncation error $\| D \|_\infty\, \mathcal{O}(h^{k+1}) + \mathcal{O}(h^{k}) = \mathcal{O}(h^{k})$.
	\Cref{lem:poly_reduction} extends this to all sufficiently smooth functions.
\end{proof}

Two comments are in order.
First, the invertibility of the Wronskian matrix \cref{eq:wronskian} is a property of the function space $\mathcal{F}$ (and the point $\bar{x}$), not of the chosen basis, since changing the basis multiplies $W(\bar{x})$ by an invertible matrix.
Second, the criterion is essentially sharp: if the degree-$k$ Taylor polynomials of $\mathcal{F}$ span only a proper subspace of $\mathcal{P}_k$, then some polynomial of degree at most $k$ cannot be approximated from $\mathcal{F}$ to the order required in (i). 
In this case, $\mathcal{F}$-exactness alone no longer guarantees design order $k$; see \cref{expl:design_order_fail} below.
At the same time, the conditions of \cref{prop:design_order} are satisfied by extended Chebyshev systems, whose Wronskians are positive \cite{karlin1966tchebycheff,yang2016short}. 
For an extended Chebyshev system on the whole computational domain, these constants are moreover uniform over all elements, as required in the element-wise varying setting considered here.
This covers, in particular, spaces spanned by exponential and trigonometric polynomials, and thus all function spaces considered in \Cref{sec:numerics}.

\begin{remark}[From nodewise to discrete $L^2$ estimates]\label{rem:discrete_L2}
	All estimates in this section are nodewise and hence control the truncation error in the maximum norm.
	They transfer to the discrete $L^2$ norms used in \Cref{sec:numerics} with explicit constants.
	Indeed, combining the SBP property with $\mathcal{F}$-exactness shows that the quadrature induced by $P$ is exact for $(fg)'$ with $f, g \in \mathcal{F}$. 
	Choosing $f = x$ and $g = 1$ shows that $\sum_{i=1}^{N} p_i$ equals the length of the element.
	Every nodewise $\mathcal{O}(h^p)$ bound therefore implies
	\begin{equation}\label{eq:L2_transfer}
		\| D \mathbf{g} - \mathbf{g}' \|_{P}
			\leq \Big( \sum_{i=1}^{N} p_i \Big)^{1/2} \max_{i = 1, \dots, N} \left| (D \mathbf{g})_i - g'(x_i) \right|
			= \mathcal{O}\big(h^{p + 1/2}\big)
	\end{equation}
	on each element.
	To obtain the corresponding global bound, index the elements by $k$ and suppose that $\max_{i} | (D_k \mathbf{g}_k)_i - g'(x_{k,i}) | \leq C h^p$ holds uniformly over all elements, where $\mathbf{g}_k$ collects the nodal values of $g$ on the element $I_k$.
	Squaring \cref{eq:L2_transfer} on each element and summing over $k$, using that the weights of $P_k$ sum to the element length $h_k$ and that the element lengths sum to the domain length, $\sum_{k} h_k = x_R - x_L$, then yields
	\begin{equation}\label{eq:L2_global}
		\sum_{k} \| D_k \mathbf{g}_k - \mathbf{g}_k' \|_{P_k}^2
			\leq \sum_{k} h_k \left( C h^p \right)^2
			= (x_R - x_L)\, C^2 h^{2p} .
	\end{equation}
	Taking the square root gives the bound $C\, (x_R - x_L)^{1/2}\, h^p$ in the global discrete $L^2$ norm $\big( \sum_{k} \| \cdot \|_{P_k}^2 \big)^{1/2}$. 
	The factor $h^{1/2}$ gained on each element in \cref{eq:L2_transfer} is exactly consumed by the $\mathcal{O}(h^{-1})$ number of elements, so that the order $p$ is preserved.
	As with all statements in this section, these bounds concern truncation errors. 
	Convergence rates for the solution error do not follow from them alone; also see the outlook in \Cref{sec:summary}.
\end{remark}

\begin{example}
\label{expl:design_order_poly}
	For the polynomial space $\mathcal{F} = \mathcal{P}_d$, \cref{prop:design_order} recovers the classical design order $p = d$.
\end{example}

\begin{example}
\label{expl:design_order_exp}
	For the exponential space $\mathcal{E}_d = \mathrm{span}\{1, x, \dots, x^{d-1}, e^{\alpha x}\}$ used in \Cref{sec:numerics}, evaluating \cref{eq:wronskian} in the re-centered basis $\{1, x - \bar{x}, \dots, (x - \bar{x})^{d-1}, e^{\alpha x}\}$ yields a triangular matrix supplemented by the row $e^{\alpha \bar{x}} [1, \alpha, \dots, \alpha^{d}]$, whose determinant is a nonzero multiple of $\alpha^{d} e^{\alpha \bar{x}}$.
	Hence, $\mathcal{E}_d$ yields the design order $p = d$ for every $\alpha \neq 0$---the same as $\mathcal{P}_d$.
\end{example}

\begin{example}
\label{expl:design_order_fail}
	The conditions of \cref{prop:design_order} can also fail: for $\mathcal{F} = \mathrm{span}\{1, x - \bar{x}, \sin(\alpha (x - \bar{x}))\}$, the sine contributes no quadratic Taylor term at $\bar{x}$, so the degree-$2$ Taylor polynomials span only $\mathcal{P}_1$.
	Indeed, every element of $\mathcal{F}$ is the sum of a constant and a function that is odd about $\bar{x}$, so that $(x - \bar{x})^2$ is at distance $\Theta(h^2)$ from $\mathcal{F}$.  
	This three-dimensional space thus supports only design order $p = 1$.
	Replacing $x - \bar{x}$ by $\cos(\alpha (x - \bar{x}))$ restores the full design order $p = 2$.
\end{example}

\begin{remark}[Same order, smaller constants]\label{rem:tailored_accuracy}
	\Cref{prop:design_order} shows that, for generic smooth solutions, tailoring $\mathcal{F}$ does not increase the design order beyond that of a polynomial space of equal dimension.
	The benefit of tailored spaces instead lies in the error constant.
	By \cref{lem:truncation}, the truncation error is proportional to the distance of the differentiated function from $\mathcal{F}$ rather than from $\mathcal{P}_p$; for the volume term below, the relevant function is $\xi \mapsto 2 \boldsymbol{f}^{\mathrm{vol}}(\boldsymbol{u}(x_i), \boldsymbol{u}(\xi))$, which for nonlinear fluxes involves nonlinear compositions of the solution.
	If these are well captured by $\mathcal{F}$, the error constant shrinks; if they lie in $\mathcal{F}$, the volume term is even exact at the nodes.
	Enriching the approximation space with, for instance, (an approximation of) a steady solution thus retains the order of the scheme while substantially reducing the error near that solution.
\end{remark}

Having identified when an FSBP operator has design order $p$, we now show that the flux-differencing volume term inherits this order.

\begin{proposition}[Accuracy of the volume term]\label{prop:accuracy}
	Let $\boldsymbol{u}$ be smooth and let $\boldsymbol{f}^{\mathrm{vol}}$ be a consistent, symmetric two-point flux that is continuously differentiable in a neighborhood of the diagonal.
	If the FSBP operator $D$ has design order $p$, then
	\begin{equation}\label{eq:accuracy}
		\vol_i = \partial_x \boldsymbol{f}(\boldsymbol{u}(x_i)) + \mathcal{O}(h^p) .
	\end{equation}
\end{proposition}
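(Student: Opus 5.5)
Fix a node $x_i$ and introduce the auxiliary function $g_i(\xi) := 2\,\boldsymbol{f}^{\mathrm{vol}}(\boldsymbol{u}(x_i), \boldsymbol{u}(\xi))$ of the variable $\xi$, treated componentwise. By definition of the volume term, $\vol_i = \sum_n D_{i,n}\, g_i(x_n) = (D \mathbf{g}_i)_i$, where $\mathbf{g}_i$ is the nodal-value vector of $g_i$. The plan is therefore to apply the design-order property of $D$ to $g_i$, giving $(D\mathbf{g}_i)_i = g_i'(x_i) + \mathcal{O}(h^p)$. We then identify $g_i'(x_i)$ with the exact flux derivative.

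For the identification step, the chain rule gives $g_i'(\xi) = 2\,\partial_2 \boldsymbol{f}^{\mathrm{vol}}(\boldsymbol{u}(x_i), \boldsymbol{u}(\xi))\, \boldsymbol{u}'(\xi)$. At $\xi = x_i$, \cref{lem:EC_derivative} yields $2\,\partial_2 \boldsymbol{f}^{\mathrm{vol}}(\boldsymbol{u}(x_i),\boldsymbol{u}(x_i)) = \boldsymbol{f}'(\boldsymbol{u}(x_i))$. Hence $g_i'(x_i) = \boldsymbol{f}'(\boldsymbol{u}(x_i))\boldsymbol{u}'(x_i) = \partial_x \boldsymbol{f}(\boldsymbol{u}(x_i))$. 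Combining this with the design-order estimate gives \cref{eq:accuracy}. Since $\boldsymbol{u}$ is continuous and the element shrinks, all states $\boldsymbol{u}(\xi)$ on the element lie close to $\boldsymbol{u}(x_i)$. The pairs $(\boldsymbol{u}(x_i),\boldsymbol{u}(\xi))$ thus remain in the neighborhood of the diagonal where $\boldsymbol{f}^{\mathrm{vol}}$ is differentiable, at least for $h$ small.

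The main obstacle is uniformity. The function $g_i$ depends on the node $x_i$, and hence on $h$, whereas the design order is stated for a fixed smooth $g$ with an implicit constant. To resolve this, I would go back to the proofs of \cref{lem:poly_reduction} and \cref{lem:truncation}. There the constant in $\mathcal{O}(h^p)$ depends only on $C_D$, on bounds for the Taylor coefficients up to order $p$, and on a bound for $\|g^{(p+1)}\|_{L^\infty}$ over the element. For $g_i$, these derivatives are controlled uniformly in $i$ and $h$ by derivatives of $\boldsymbol{f}^{\mathrm{vol}}$ in its second argument on a compact neighborhood of the diagonal and by derivatives of $\boldsymbol{u}$. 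This control requires $\boldsymbol{f}^{\mathrm{vol}}$ to be sufficiently smooth, i.e.\ $C^{p+1}$, near the diagonal. I would make this requirement explicit as the precise meaning of ``smooth'' in the hypothesis, beyond mere $C^1$.

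Concretely, I would rewrite the design-order statement in the uniform form $|(D\mathbf{g})_i - g'(x_i)| \le C\, h^p$, with $C$ depending only on $C_D$ and $\max_{0\le j\le p+1}\|g^{(j)}\|_{L^\infty}$. Applying it to the family $\{g_i\}$ then completes the proof, with a constant that is uniform over nodes and elements.
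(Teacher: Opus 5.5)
Your proposal is correct and follows the paper's proof: you use the same auxiliary function $\boldsymbol{g}(\xi) = 2\boldsymbol{f}^{\mathrm{vol}}(\boldsymbol{u}(x_i),\boldsymbol{u}(\xi))$ with $\vol_i = (D\mathbf{g})_i$, apply the design-order estimate, and identify $\boldsymbol{g}'(x_i)$ via the chain rule and \cref{lem:EC_derivative}. Your treatment of uniformity in $i$ and $h$, and your observation that $\boldsymbol{f}^{\mathrm{vol}}$ must be $C^{p+1}$ near the diagonal rather than merely $C^1$, are legitimate refinements of a point the paper skips by simply calling $\boldsymbol{g}$ smooth; note, though, that your uniform constant via \cref{lem:poly_reduction} relies on norm-boundedness of the family, which the proposition does not state explicitly.
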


\begin{proof}
	Fix $i$ and define the smooth function $\boldsymbol{g}(\xi) := 2\, \boldsymbol{f}^{\mathrm{vol}}(\boldsymbol{u}(x_i), \boldsymbol{u}(\xi))$, with nodal-value array $\mathbf{g} = [\boldsymbol{g}(x_1), \dots, \boldsymbol{g}(x_N)]^T$, so that $\vol_i = \sum_j D_{ij}\, \boldsymbol{g}(x_j) = (D \mathbf{g})_i$.
	Since $\boldsymbol{g}$ is smooth, the design-order assumption (applied componentwise) gives $(D \mathbf{g})_i = \boldsymbol{g}'(x_i) + \mathcal{O}(h^p)$.
	By the chain rule and \cref{lem:EC_derivative},
	\begin{equation}
		\boldsymbol{g}'(x_i) = 2\, \partial_2 \boldsymbol{f}^{\mathrm{vol}}(\boldsymbol{u}(x_i), \boldsymbol{u}(x_i))\, \boldsymbol{u}'(x_i) = \boldsymbol{f}'(\boldsymbol{u}(x_i))\, \boldsymbol{u}'(x_i) = \partial_x \boldsymbol{f}(\boldsymbol{u}(x_i)) ,
	\end{equation}
	which proves \cref{eq:accuracy}.
\end{proof}

\begin{corollary}[Consistency and order]\label{cor:consistency}
	The flux-differencing semi-discretization \cref{eq:fd_scheme} is consistent with the conservation law \cref{eq:cons_law} and inherits the design order $p$ of the underlying FSBP operator.
\end{corollary}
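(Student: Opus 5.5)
We plan to prove \cref{cor:consistency} by inserting the nodal values of a smooth exact solution $\boldsymbol{u}$ into the right-hand side of \cref{eq:fd_scheme} and showing that the residual is $\mathcal{O}(h^p)$ at every node. Let $\mathbf{u}(t)$ denote the array of nodal values $\boldsymbol{u}(x_i,t)$. Since $\boldsymbol{u}$ solves \cref{eq:cons_law}, we have $\frac{\d}{\d t}\boldsymbol{u}(x_i,t) = -\partial_x \boldsymbol{f}(\boldsymbol{u}(x_i,t))$. The local truncation error at node $x_i$ is therefore
\begin{equation*}
	\boldsymbol{\tau}_i = \left(\vol_i - \partial_x \boldsymbol{f}(\boldsymbol{u}(x_i))\right) - \left(P^{-1} R^T B (R\mathbf{f} - \mathbf{f}^{\mathrm{surf}})\right)_i .
\end{equation*}
We treat the two terms separately.

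For the volume term, we invoke \cref{prop:accuracy} directly at each fixed time. It gives $\vol_i - \partial_x \boldsymbol{f}(\boldsymbol{u}(x_i)) = \mathcal{O}(h^p)$, uniformly in $i$ and over the elements, because the constants depend only on bounds for finitely many derivatives of $\boldsymbol{u}$ and $\boldsymbol{f}^{\mathrm{vol}}$ on a compact set. In the multi-element setting of \cref{eq:fd_scheme_multi}, the same holds element by element by the norm-boundedness assumption.

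For the surface term, only the boundary nodes $i=1$ and $i=N$ contribute, since $R^T B$ is supported on them. At an interior interface the exact solution is continuous, so $\boldsymbol{u}^-_{k+\frac12} = \boldsymbol{u}^+_{k+\frac12} = \boldsymbol{u}(x_{k+\frac12})$. Consistency of $\boldsymbol{f}^{\mathrm{surf}}$ then gives $\boldsymbol{f}^{\mathrm{surf}}_{k+\frac12} = \boldsymbol{f}(\boldsymbol{u}(x_{k+\frac12}))$, which equals the corresponding entry of $R\mathbf{f}$. Hence the penalty term vanishes identically. The ES dissipation in \cref{eq:ES_flux} is proportional to the jump $\boldsymbol{w}_k - \boldsymbol{w}_l$, which is zero here, so it vanishes as well. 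At physical boundaries we assume the boundary data are the exact traces, and the same argument applies. Combining the two estimates gives $\boldsymbol{\tau}_i = \mathcal{O}(h^p)$, which proves consistency for $p \ge 1$ and the inheritance of the design order.

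The main obstacle is the surface term. If it did not vanish exactly, the factor $P^{-1}$, with $p_1^{-1} = \mathcal{O}(h^{-1})$, would amplify any mismatch. The plan therefore relies on exact vanishing through consistency and continuity of the exact solution, rather than on an estimate. We will also note that consistency in the weak sense follows from \cref{cor:conservation} together with the Lax--Wendroff argument.
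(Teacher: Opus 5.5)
Your proposal is correct and takes the same route as the paper: the volume term is handled by \cref{prop:accuracy}, and the surface term is a consistent boundary correction that vanishes identically once the exact, continuous traces are inserted. Your explicit point that this term must vanish exactly, because the factor $P^{-1}$ scales like $h^{-1}$ at the boundary nodes, usefully sharpens the paper's one-line justification.
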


\begin{proof}
	Immediate from \cref{prop:accuracy}, since the surface term in \cref{eq:fd_scheme} is a consistent boundary correction supported at the boundary nodes.
\end{proof}

Since the analysis of this section is local to a single element, it carries over verbatim to the element-wise varying setting of \Cref{subsec:fd_multi}, where both the function space $\mathcal{F}_k$ and the FSBP operator $D_k$ may change from element to element.
The following result makes this precise.

\begin{corollary}[Element-wise function spaces]\label{cor:varying_spaces}
	Consider the multi-element semi-discretization \cref{eq:fd_scheme_multi}, where the $\mathcal{F}_k$-exact FSBP operator $D_k$ on the element $I_k$ has design order $p_k$.
	Then the volume terms satisfy $\vol_{k,i} = \partial_x \boldsymbol{f}(\boldsymbol{u}(x_{k,i})) + \mathcal{O}(h^{p})$ with $p = \min_k p_k$.
	That is, the coupled scheme inherits the smallest element-wise design order.
\end{corollary}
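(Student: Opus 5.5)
The plan is to apply \cref{prop:accuracy} element by element and then take the worst constant and the worst exponent over all elements. Since the analysis of \cref{prop:accuracy} is local, this requires only two checks. First, the volume term $\vol_{k,i}$ in \cref{eq:fd_scheme_multi} must depend on data of element $I_k$ alone. Second, the $\mathcal{O}(h^{p_k})$ remainders must be uniform over $k$.

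\textbf{Step 1 (locality).} Fix an element $I_k$ and a node $x_{k,i}$. By \cref{eq:fd_scheme_multi}, $\vol_{k,i} = (D_k \mathbf{g}_{k,i})_i$, where $\mathbf{g}_{k,i}$ collects the nodal values on the grid of $I_k$ of the function $\boldsymbol{g}_{k,i}(\xi) := 2\,\boldsymbol{f}^{\mathrm{vol}}(\boldsymbol{u}(x_{k,i}), \boldsymbol{u}(\xi))$. Neither the surface fluxes nor neighboring elements enter. Hence the hypotheses of \cref{prop:accuracy} hold on $I_k$ with $D = D_k$ and design order $p_k$, provided $\boldsymbol{u}$ is smooth and $\boldsymbol{f}^{\mathrm{vol}}$ is consistent, symmetric and $C^1$ near the diagonal. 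This yields $\vol_{k,i} = \partial_x \boldsymbol{f}(\boldsymbol{u}(x_{k,i})) + \mathcal{O}(h^{p_k})$.

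\textbf{Step 2 (uniformity).} Here I read the assumption ``$D_k$ has design order $p_k$'' in the uniform sense fixed at the start of \Cref{sec:accuracy}: all constants hold uniformly over the elements. Concretely, I would revisit the proof of \cref{prop:accuracy}. There the error is the truncation error of $D_k$ applied to $\boldsymbol{g}_{k,i}$. Via \cref{lem:poly_reduction} and \cref{lem:truncation}, this error is bounded using $\|D_k\|_\infty \le C_D h_k^{-1}$ and Taylor bounds on $\boldsymbol{g}_{k,i}$. The Taylor bounds involve derivatives of $\boldsymbol{f}^{\mathrm{vol}}$ and of $\boldsymbol{u}$ on a compact neighborhood of the solution's range, so they are independent of $k$ and $i$. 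If the design order is certified through \cref{prop:design_order}, the constants also include $\|W_k(\bar{x})^{-1}\|_\infty$ and $\max_n \|v_n^{(k+1)}\|_{L^\infty}$, and these are uniform over elements by the standing assumption (for instance, for an extended Chebyshev system on the whole domain). This gives a single constant $C$ with $|\vol_{k,i} - \partial_x \boldsymbol{f}(\boldsymbol{u}(x_{k,i}))| \le C h^{p_k}$ for all $k$ and $i$.

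\textbf{Step 3 (minimum).} For $h \le 1$ and $p = \min_k p_k$, we have $h^{p_k} \le h^{p}$ for every $k$, which gives the claimed bound. This needs the $p_k$ to range over a finite set, which holds since $\dim \mathcal{F}_k$ is bounded by the uniformly bounded number of nodes per element. I expect the only genuine obstacle to be Step 2, namely that the $\mathcal{O}$-constants do not degenerate as $k$ varies when the function spaces change from element to element. I would handle it by making the uniformity assumption explicit rather than proving it, since it is already part of the section's standing hypotheses.
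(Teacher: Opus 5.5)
Your proposal is correct and is essentially the paper's argument: the paper's proof applies \cref{prop:accuracy} on each element and invokes the standing assumption that all estimates are uniform over the elements, and your Steps~1--3 spell this out in more detail. A minor aside on Step~3: you do not need the $p_k$ to range over a finite set, since $h^{p_k} \le h^{p}$ holds for $h \le 1$ and any common lower bound $p \le p_k$ (and the $p_k$ are not controlled by $\dim \mathcal{F}_k$ anyway, cf.\ \cref{rem:eoc_vs_design_order}).
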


\begin{proof}
	Apply \cref{prop:accuracy} to each element, recalling that all estimates are uniform over the elements.
\end{proof}

\begin{remark}[Mixing function spaces does not reduce the order]\label{rem:mixing_spaces}
	By \cref{prop:design_order}, every function space of dimension $p + 1$ that satisfies the Wronskian criterion yields the same design order $p$.
	Assigning different such spaces to different elements---for instance, polynomial spaces where the solution is smooth and slowly varying, and exponential spaces near steep gradients---therefore does not reduce the design order of the coupled scheme, while the error constant can be tuned element by element as described in \cref{rem:tailored_accuracy}.
\end{remark}
 
\section{Computational examples}
\label{sec:numerics}

We now demonstrate the theory of \Cref{sec:fluxDiff,sec:accuracy} numerically.
The experiments are designed to investigate the discrete mass conservation properties (\cref{cor:conservation}), the semi-discrete entropy conservation and stability (\cref{thm:entropy_cons,thm:coupled}), and the design-order accuracy of the volume term (\cref{prop:accuracy}).

\subsubsection*{Implementation and diagnostics}
\label{subsec:num_setup}

We provide some implementation and diagnostic details.
All our computational experiments can be reproduced using the publicly available reproducibility repository \cite{haase2026reproducibility}.
Notably, our codebase \cite{haase2026reproducibility} builds on the Julia package Trixi.jl \cite{ranocha2022adaptive} and extends its functionality from spatially static polynomial SBP to spatially varying FSBP operators.

\subsubsection*{Function spaces and operators}

As a baseline, we use the polynomial space $\mathcal{P}_d = \mathrm{span}\{1, x, \dots, x^d\}$, which recovers the classical polynomial SBP operators.
The FSBP operators are based on the exponential and trigonometric spaces
\begin{align}
	\mathcal{E}_d & = \mathrm{span}\{1, x, \dots, x^{d-1}, e^{x}\}, \label{eq:exp_space} \\ 
	\mathcal{E}^* & = \mathrm{span}\{1, x, e^{x}, e^{2x}\}, \label{eq:exp_space_exact} \\ 
	\mathcal{T}^* & = \mathrm{span}\{1, x, \sin 2\pi x, \cos 2\pi x\}. \label{eq:trig_space_exact}
\end{align}
Each space contains the constant function, as required by our standing assumption.

We construct closed Gaussian FSBP operators \cite{hale2026summation,bercik2026construction,glaubitz2026gaussian} with grids that include the boundary nodes. 
The grid points and the underlying positive and $(\mathcal{F}\mathcal{F})'$-exact quadrature for the norm matrix $P$ correspond to a generalized Gaussian quadrature that we compute using a modified version of the algorithms in \cite{hale2026summation, yarvinGeneralizedGaussianQuadratures1998}.

The domain is partitioned into $K$ elements coupled through numerical interface fluxes.
The approximation space of an element is the global approximation space restricted to the element's domain.
Each element may require a different FSBP operator adapted to its domain---if the function space $\mathcal{F}$ is not translation invariant.
Because the considered function spaces $\mathcal{E}_d, \mathcal{E}^*$ and $\mathcal{T}^*$ are translation invariant, however, it suffices to compute operators for different element lengths.

\subsubsection*{Two-point fluxes}

For Burgers' equation with entropy $S(u) = u^2/2$, we use the entropy-conservative volume flux $\fvol(u_L, u_R) = (u_L^2 + u_L u_R + u_R^2)/6$, which satisfies the EC condition \cref{eq:EC_condition} for the entropy variable $w = u$; see \cite{tadmor1987numerical}.
For the compressible Euler equations, we use the entropy-conservative and kinetic-energy-preserving flux of \cite{ranocha2018comparison} in both the volume and interface terms.
The entropy-stable interface flux is then obtained by adding the matrix dissipation \cref{eq:ES_flux} acting on the jump in the entropy variables, for which we take a modified Roe-type \cite{wintersUniquelyDefinedEntropy2017} (Euler) or local Lax--Friedrichs (Burgers) dissipation operator $\Lambda$.

\subsubsection*{Time integration}
We integrate in time using (i) Verner's explicit ninth-order Runge--Kutta (RK) method \cite{vernerNumericallyOptimalRunge2010} for the convergence studies; 
(ii) an explicit strong-stability-preserving RK (SSP-RK) method \cite{shu1988total} for the robustness studies; and 
(iii) a relaxation RK method \cite{ranocha2020relaxation} for the entropy-conservation experiments, which renders the fully discrete scheme entropy-conservative and thereby allows us to verify \cref{thm:entropy_cons,thm:coupled} to machine precision rather than only up to the time-integration error.
For all integrators, we pick a time step small enough that temporal error does not contaminate the reported spatial quantities.
We select a ninth-order accurate time integration scheme for the convergence studies because one of the operators considered yields an exact semi-discretization. 
Thus, the fully discrete error comes only from the time integrator.

\subsubsection*{Diagnostics}

We measure errors in the discrete $P$-norm and report the relative global error as
\begin{equation}\label{eq:global_P_error}
    e = \frac{ \| \mathbf{u} - \mathbf{u}_{\mathrm{ex}} \|_P }{ \| \mathbf{u}_{\mathrm{ex}} \|_P} = \frac{\sqrt{ \sum_k {\| \mathbf{u}_k - \mathbf{u}_{k,\mathrm{ex}} \|_{P_k}}^2 } }{ \sqrt{ \sum_k {\| \mathbf{u}_{k,\mathrm{ex}} \|_{P_k}}^2}},
\end{equation}
where $\mathbf{u}_{k,\mathrm{ex}}$ are the nodal values of the exact solution at the grid points of element $k$ with norm matrix $P_k$, dropping the index for the global formulation.
Moreover, we report the experimental order of convergence (EOC)
\begin{equation}\label{eq:EOC}
	\mathrm{EOC} = \frac{\log\!\big(e^{(K_1)} / e^{(K_2)} \big)}{\log(K_2 / K_1)},
\end{equation}
where $e^{(K_1)}$ and $e^{(K_2)}$ are the relative global errors \cref{eq:global_P_error} for $K_1$ and $K_2$ elements.

To monitor the structure-preserving properties, we track the discrete mass $\mathbf{1}^T P \mathbf{u} = \sum_k \mathbf{1}^T P_k \mathbf{u}_k$ (\cref{cor:conservation}) and the total entropy $\mathbf{1}^T P\, S(\mathbf{u}) = \sum_k \mathbf{1}^T P_k\, S(\mathbf{u}_k)$ (\cref{thm:entropy_cons,thm:coupled}).
In general, the problem may carry a source $\boldsymbol{s} = \boldsymbol{s}(x,t)$, changing the conservation law \cref{eq:cons_law} into a balance law
\begin{equation}\label{eq:balance_law}
	\partial_t \boldsymbol{u} + \partial_x \boldsymbol{f}(\boldsymbol{u}) = \boldsymbol{s}, \quad x \in (x_L, x_R), \quad t > 0.
\end{equation} 
The source $\boldsymbol{s}$ then enters the semi-discretization as the nodal term $\mathbf{s}$ with rows $\boldsymbol{s}_i = \boldsymbol{s}(x_i,t)$.
Carrying this source through the same contractions as in the proofs of \cref{cor:conservation,thm:entropy_cons} yields the discrete balances
\begin{equation}\label{eq:source_balance}
\begin{aligned}
	\frac{\d}{\d t}\, \mathbf{1}^T P \mathbf{u}
		&= \big( \boldsymbol{f}^{\mathrm{surf}}_L - \boldsymbol{f}^{\mathrm{surf}}_R \big) + \mathbf{1}^T P \mathbf{s}, \\
	\frac{\d}{\d t}\, \mathbf{1}^T P\, S(\mathbf{u})
		&= \big( \psi_N - \boldsymbol{w}_N^T \boldsymbol{f}^{\mathrm{surf}}_R \big) - \big( \psi_1 - \boldsymbol{w}_1^T \boldsymbol{f}^{\mathrm{surf}}_L \big) + \sum_i p_i\, \boldsymbol{w}_i^T \boldsymbol{s}_i ,
\end{aligned}
\end{equation}
where the boundary terms are collected over the physical domain boundary and vanish for periodic boundaries.
For entropy-stable interface fluxes, the entropy balance only holds as an inequality (\cref{thm:coupled}).
Since every term on the right-hand sides of \cref{eq:source_balance} is computable from the boundary states and the source, the primary diagnostic of interest is the time-integrated \emph{residual} of \cref{eq:source_balance}. 
That is, the discrete change of each invariant minus its predicted right-hand side:
\begin{equation}\label{eq:source_balance_integrated}
\begin{aligned}
	\mathbf{1}^T P \mathbf{u}(t_1) - \mathbf{1}^T P \mathbf{u}(t_0)
        &= \int_{t_0}^{t_1} \left[ \big( \boldsymbol{f}^{\mathrm{surf}}_L - \boldsymbol{f}^{\mathrm{surf}}_R \big) + \mathbf{1}^T P \mathbf{s} \right]\intd t, \\
	\mathbf{1}^T P\, S(\mathbf{u}(t_1)) - \mathbf{1}^T P\, S(\mathbf{u}(t_0))
        &= \int_{t_0}^{t_1} \left[ \big( \psi_N - \boldsymbol{w}_N^T \boldsymbol{f}^{\mathrm{surf}}_R \big) - \big( \psi_1 - \boldsymbol{w}_1^T \boldsymbol{f}^{\mathrm{surf}}_L \big) + \sum_i p_i\, \boldsymbol{w}_i^T \boldsymbol{s}_i\right] \intd t .
\end{aligned}
\end{equation}
Denoting the two right-hand sides of \cref{eq:source_balance_integrated} by $\pi_{\mathbf{u}} \in \R^m$ and $\pi_S \in \R$, the mass and entropy residuals are
\begin{equation}\label{eq:residuals}
\begin{aligned}
	r_{\mathbf{u}}(t_1, t_0) 
		& := \mathbf{1}^T P \mathbf{u}(t_1) - \mathbf{1}^T P \mathbf{u}(t_0) - \pi_{\mathbf{u}}, \\
	r_S(t_1, t_0) 
		& := \mathbf{1}^T P\, S(\mathbf{u}(t_1)) - \mathbf{1}^T P\, S(\mathbf{u}(t_0)) - \pi_S - \sigma_S.
\end{aligned}
\end{equation} 
Notably, for the entropy residual $r_S$, we have removed the entropy $\sigma_S = \int_{t_0}^{t_1} \dot S_{\mathrm{exact}} \intd t$ produced by any shocks present in the exact solution, which is nonzero only for the modified Sod tube for which we give a closed form of $\dot S_{\mathrm{exact}}$ in \Cref{subsec:num_euler_sod}.
By \cref{cor:conservation,thm:entropy_cons}, $r_{\mathbf{u}}$ vanishes for any consistent and symmetric flux, and for $\sigma_S = 0$ so does $r_S$ for an entropy-conservative flux. 
So a nonzero residual reflects time integration, round-off, and, for entropy-stable fluxes, the interface dissipation.

For a source-free problem ($\boldsymbol{s} = 0$), on a periodic domain, the right-hand sides of \cref{eq:source_balance_integrated} vanish. 
The discrete mass is then exactly constant, and the total entropy is constant for an entropy-conservative flux or non-increasing for an entropy-stable flux.
In this case, the residuals reduce to the differences $\mathbf{1}^T P \mathbf{u}(t_1) - \mathbf{1}^T P \mathbf{u}(t_0)$ and $\mathbf{1}^T P\, S(\mathbf{u}(t_1)) - \mathbf{1}^T P\, S(\mathbf{u}(t_0))$.
We use this homogeneous setting for the dedicated conservation and entropy tests, and the boundary-corrected residuals \cref{eq:residuals} for the modified Sod shock tube of \Cref{subsec:num_euler_sod}.

\subsection{Burgers' equation}
\label{subsec:num_burgers}

We consider the inviscid Burgers' equation $\partial_t u + \partial_x (u^2/2) = 0$ with the square entropy $S(u) = u^2/2$, which is the simplest setting exhibiting genuinely nonlinear entropy conservation and stability. 
We take the periodic domain $[-1, 1]$ with initial data
\begin{equation}\label{eq:burgers_ic}
	u(x, 0) = c - \sin\big( \pi (x - x_s) \big), 
	\quad x_s = \tfrac{1}{8}, \quad c = \tfrac{3}{2},
\end{equation}
so that a shock forms at $t^\ast = 1/\pi$ and subsequently travels at speed $c$.

\subsubsection*{Smooth regime}

Before shock formation, the solution is smooth, and we report the corresponding EOCs in \cref{tab:burgers_eoc,fig:burgers_conv}, comparing $\mathcal{P}_d$, $\mathcal{E}^*$, and $\mathcal{E}_d$ with entropy-stable surface fluxes.

\begin{figure}[tb]
	\centering
	\begin{subfigure}{0.49\linewidth}
		\centering
		\includegraphics{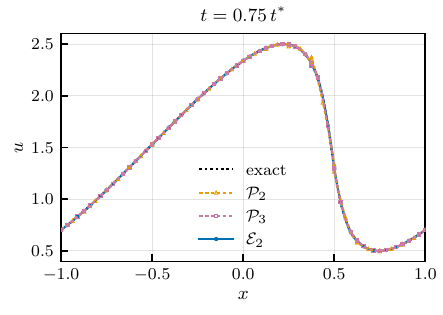}
		\caption{Solutions at $t = 0.75 t^\ast$.}
		\label{fig:burgers_sol_smooth}
	\end{subfigure}
	\begin{subfigure}{0.49\linewidth}
		\centering
		\includegraphics{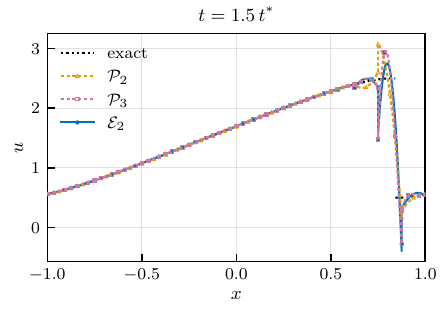}
		\caption{Solutions at $t = 1.5 t^\ast$.}
		\label{fig:burgers_sol_shock}
	\end{subfigure}
    \caption{Comparison of numerical solutions to Burgers' equation with initial data \cref{eq:burgers_ic} against the exact solution before and after the time of shock formation $t^\ast$. Shown are solutions obtained with the FSBP operator for the space $\mathcal{E}_2 = \mathrm{span}\{1, x, e^x\}$ and the standard DGSEM polynomial operators for $\mathcal{P}_2$ and $\mathcal{P}_3$, all on $16$ elements and with Verner's RK method in time. The markers indicate the nodal solution of the schemes. For $\mathcal{P}_2$ and $\mathcal{P}_3$ the dashed lines show the corresponding polynomials, while for $\mathcal{E}_2$ we show the least-squares fit to the nodal solution.}
	\label{fig:burgers_snapshots}
\end{figure}

\begin{table}[tb]
	\centering
	\small
	\setlength{\tabcolsep}{4.5pt}
    \caption{Relative errors and EOCs for Burgers' equation before shock formation at $t = 0.75 t^\ast$. 
    We compare operators with the same number of nodes. 
    The FSBP operator has $4$ nodes for $\mathcal{E}_2 = \mathrm{span}\{1, x, e^x\}$ and $5$ nodes for $\mathcal{E}^{*} = \mathrm{span}\{1, x, e^x, e^{2x}\}$. 
    We compare them with SBP operators for the polynomial spaces $\mathcal{P}_3$ and $\mathcal{P}_4$ with $4$ and $5$ nodes, respectively. 
    In bold, we show the lower relative error at the same number of nodes.}
	\label{tab:burgers_eoc}
	\begin{tabular}{c cc cc cc cc c c}
	\toprule
	 & \multicolumn{4}{c}{$4$ nodes} & \multicolumn{4}{c}{$5$ nodes} &  &  \\
	\cmidrule(lr){2-5}\cmidrule(lr){6-9}
	 & \multicolumn{2}{c}{$\mathcal{P}_3$ ($\dim \mathcal{P}_3 = 4$)} & \multicolumn{2}{c}{$\mathcal{E}_2$ ($\dim \mathcal{E}_2 = 3$)} & \multicolumn{2}{c}{$\mathcal{P}_4$ ($\dim \mathcal{P}_4 = 5$)} & \multicolumn{2}{c}{$\mathcal{E}^{*}$ ($\dim \mathcal{E}^{*} = 4$)} & $\mathcal{P}_3/\mathcal{E}_2$ & $\mathcal{P}_4/\mathcal{E}^{*}$ \\
	\cmidrule(lr){2-3}\cmidrule(lr){4-5}\cmidrule(lr){6-7}\cmidrule(lr){8-9}
	$K$ & error & EOC & error & EOC & error & EOC & error & EOC &  &  \\
	\midrule
	4 & $\mathbf{2.52\times10^{-2}}$ & --- & $2.57\times10^{-2}$ & --- & $1.42\times10^{-2}$ & --- & $\mathbf{1.33\times10^{-2}}$ & --- & $0.98$ & $1.07$ \\
	8 & $9.68\times10^{-3}$ & $1.38$ & $\mathbf{9.55\times10^{-3}}$ & $1.43$ & $2.99\times10^{-3}$ & $2.25$ & $\mathbf{2.62\times10^{-3}}$ & $2.34$ & $1.01$ & $1.14$ \\
	16 & $1.70\times10^{-3}$ & $2.51$ & $\mathbf{1.65\times10^{-3}}$ & $2.53$ & $\mathbf{2.00\times10^{-4}}$ & $3.90$ & $2.31\times10^{-4}$ & $3.51$ & $1.03$ & $0.87$ \\
	32 & $\mathbf{7.94\times10^{-5}}$ & $4.42$ & $8.28\times10^{-5}$ & $4.32$ & $\mathbf{6.95\times10^{-5}}$ & $1.53$ & $6.97\times10^{-5}$ & $1.73$ & $0.96$ & $1.00$ \\
	64 & $\mathbf{2.50\times10^{-5}}$ & $1.67$ & $2.52\times10^{-5}$ & $1.72$ & $2.46\times10^{-6}$ & $4.82$ & $\mathbf{2.38\times10^{-6}}$ & $4.87$ & $0.99$ & $1.04$ \\
	128 & $\mathbf{1.77\times10^{-6}}$ & $3.82$ & $1.78\times10^{-6}$ & $3.83$ & $\mathbf{9.64\times10^{-8}}$ & $4.68$ & $9.68\times10^{-8}$ & $4.62$ & $1.00$ & $1.00$ \\
	256 & $\mathbf{1.18\times10^{-7}}$ & $3.90$ & $1.19\times10^{-7}$ & $3.90$ & $\mathbf{3.23\times10^{-9}}$ & $4.90$ & $3.24\times10^{-9}$ & $4.90$ & $1.00$ & $1.00$ \\
	512 & $\mathbf{7.61\times10^{-9}}$ & $3.96$ & $7.63\times10^{-9}$ & $3.96$ & $\mathbf{1.04\times10^{-10}}$ & $4.96$ & $1.04\times10^{-10}$ & $4.96$ & $1.00$ & $1.00$ \\
	\bottomrule
\end{tabular}

\end{table}

\begin{figure}[tb]
	\centering
	\includegraphics{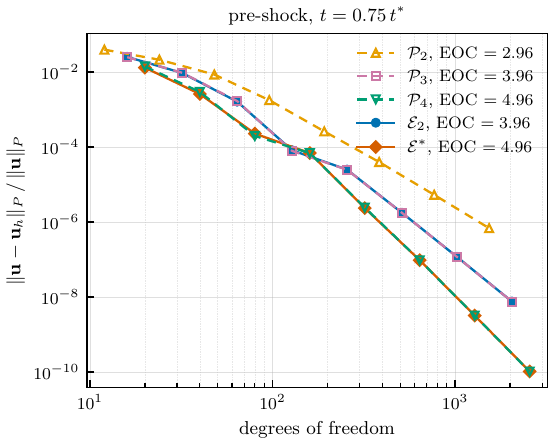}
    \caption{Convergence study for solutions obtained with FSBP operators for the spaces $\mathcal{E}_2 = \mathrm{span}\{1, x, e^x\}$ and $\mathcal{E}^{*} = \mathrm{span}\{1, x, e^x, e^{2x}\}$ against standard DGSEM SBP operators for the polynomial spaces $\mathcal{P}_2$, $\mathcal{P}_3$ and $\mathcal{P}_4$ for Burgers' equation before shock formation at $t = 0.75 t^\ast$ with initial data \cref{eq:burgers_ic}. 
    We compute the EOCs from the last two data points.}
	\label{fig:burgers_conv}
\end{figure}

We stop at $t = 0.75 t^\ast$, which yields the numerical solutions in \cref{fig:burgers_sol_smooth}.
In \cref{tab:burgers_eoc}, we observe that the polynomial operators for the spaces $\mathcal{P}_d$ achieve $\text{EOC} \approx d + 1$ asymptotically, while $\mathcal{E}_2$ reaches $\text{EOC} \approx 4$ and $\mathcal{E}^*$ reaches $\text{EOC} \approx 5$.

\begin{remark}[Higher order than guaranteed]\label{rem:eoc_vs_design_order}
	The EOCs observed for the FSBP operators are higher than the design order guaranteed by \cref{prop:design_order} suggests.
	For the polynomial operators, the design order $p = d = N - 1$ (\cref{expl:design_order_poly}) comes with $\text{EOC} \approx p + 1 = N$, the familiar behavior of polynomial SBP operators on $N$ Gauss--Lobatto nodes.
	For the FSBP operators, \cref{prop:design_order} only guarantees the design order $\dim \mathcal{F} - 1$, i.e., $p = 2$ for $\mathcal{E}_2$ and $p = 3$ for $\mathcal{E}^*$, and by analogy with the polynomial case one would expect $\text{EOC} \approx 3$ and $\text{EOC} \approx 4$.
	Instead, we observe $\text{EOC} \approx 4$ for $\mathcal{E}_2$ on $N = 4$ nodes and $\text{EOC} \approx 5$ for $\mathcal{E}^*$ on $N = 5$ nodes.
	Our experiments thus indicate that Gaussian FSBP operators can have design order $N - 1$, determined by the number of nodes rather than by $\dim \mathcal{F} - 1$, just as for the polynomial DGSEM operators, for which $N - 1 = \dim \mathcal{P}_{N-1} - 1$ so that both agree.
	Since our construction requires $N = \dim \mathcal{F} + 1$ nodes for $\mathcal{E}_2$ and $\mathcal{E}^*$, this is one order more than the currently existing theory anticipates.
	We hope to establish a deeper theoretical understanding of this finding in future work, closing the gap between the theory for FSBP operators and their surprisingly higher convergence orders in practice.
	Such a theory would sharpen rather than change the conclusion of \cref{rem:tailored_accuracy}: asymptotically, FSBP operators are neither worse nor better than polynomial SBP operators on the same number of nodes.
	What we aim at with tailored function spaces is a smaller error constant, i.e., improved accuracy for coarse-grid simulations.
\end{remark}

On a periodic domain, the discrete mass $\mathbf{1}^T P \mathbf{u}$ is conserved to high precision at errors below $10^{-13}$ (\cref{cor:conservation}).
In \cref{fig:burgers_mass}, we observe a small upward drift of the conservation error for the operators for $\mathcal{P}_4$ and $\mathcal{E}^*$, which are the only ones with errors at about $10^{-14}$, while the errors for $\mathcal{P}_2$, $\mathcal{P}_3$ and $\mathcal{E}_2$ remain at machine precision.
We elaborate on this phenomenon in \Cref{subsec:num_euler_density_wave}.

\begin{figure}[tb]
	\centering
	\includegraphics{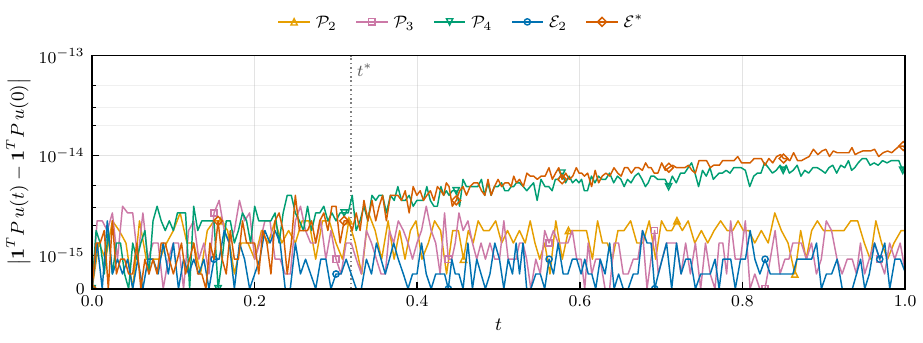}
    \caption{Conservation errors of numerical solutions to Burgers' equation with initial data \cref{eq:burgers_ic}. We compare solutions obtained with FSBP operators for the spaces $\mathcal{E}_2 = \mathrm{span}\{1, x, e^x\}$ and $\mathcal{E}^{*} = \mathrm{span}\{1, x, e^x, e^{2x}\}$ against solutions obtained with standard DGSEM polynomial SBP operators $\mathcal{P}_2$, $\mathcal{P}_3$ and $\mathcal{P}_4$. 
    We used $16$ elements and the relaxation RK method for time stepping and entropy-stable surface fluxes.}
	\label{fig:burgers_mass}
\end{figure}

Using the entropy-conservative flux together with relaxation time integration, \cref{fig:burgers_entropy} demonstrates that the total entropy is conserved to up to round-off over time (\cref{thm:coupled}), whereas the entropy-stable flux produces a monotonically non-increasing total entropy.
While physically unsound, the entropy-conservative scheme does not crash after the shock formation at $t^\ast$ and preserves entropy.

\begin{figure}[tb]
	\centering
	\includegraphics{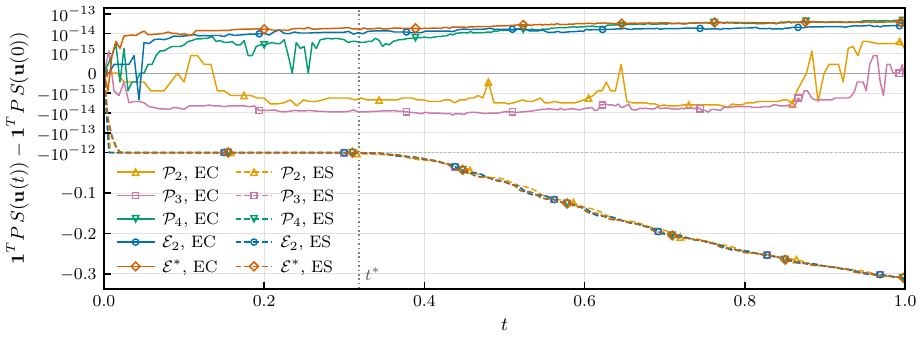}
	\caption{Comparison of entropy dissipation of numerical solutions to Burgers' equation with initial data \cref{eq:burgers_ic}. 
	We compare solutions obtained with FSBP operators for the spaces $\mathcal{E}_2 = \mathrm{span}\{1, x, e^x\}$ and $\mathcal{E}^{*} = \mathrm{span}\{1, x, e^x, e^{2x}\}$ against solutions obtained with standard DGSEM polynomial SBP operators $\mathcal{P}_2$, $\mathcal{P}_3$ and $\mathcal{P}_4$. 
	Furthermore, we compare the behavior of the numerical solutions for entropy-conservative (EC) and entropy-stable surface fluxes (ES). 
	In the upper half of the plot, we use a symmetric logarithmic scale to show the accuracy of entropy-conservative surface fluxes in detail. 
	In the lower half, we use a linear scale showing how entropy-stable surface fluxes dissipate entropy. 
	We used $16$ elements and the relaxation RK method for time stepping to achieve machine-precision entropy conservation for entropy-conservative surface fluxes.}
	\label{fig:burgers_entropy}
\end{figure}

\subsubsection*{Shock regime}

After shock formation (as in \cref{fig:burgers_sol_shock}), we compare entropy-stable schemes against the exact solution obtained from the Lax formula \cite[Sec.~11.4]{dafermos2016hyperbolic}.
At time $t = 1.5 t^\ast$ we find that the numerical solutions for $\mathcal{P}_3$ and $\mathcal{E}_2$ are almost indistinguishable, both tracking the shock well.
The operator for $\mathcal{P}_2$ behaves similarly, displaying slightly different oscillations around the shock.
We do not show the entropy-conservative numerical solutions because they are nearly dissipation-free, yielding highly oscillatory approximations that do not match the exact solution.

\subsection{Manufactured solution for Burgers' equation}
\label{subsec:num_burgers_mms}

The previous experiment confirms the structure-preserving properties. 
We now isolate the effect of the approximation space on accuracy.
For this, we use the method of manufactured solutions, prescribing a family of smooth exact solutions with an exponential spatial profile and adding the matching source.
On $\Omega = [0,1]$ and $t \in [0,1]$, we take
\begin{equation}\label{eq:burgers_mms}
	u_{\mathrm{ex}}(x,t) = a(t)\, e^{\alpha x} + b(t),
\end{equation}
and consider the inhomogeneous Burgers' equation $\partial_t u + \partial_x (u^2/2) = s$ with source term
\begin{equation}\label{eq:burgers_mms_source}
	s(x,t) = \partial_t u_{\mathrm{ex}} + \partial_x \big( u_{\mathrm{ex}}^2 / 2 \big).
\end{equation}
Furthermore, we take the boundary data from $u_{\mathrm{ex}}$ for the inflow on the left and impose it weakly through the surface flux.
At the same time, we use the entropy-stable interface flux throughout. 
We remark that, when running the subsequent accuracy study for the entropy-conservative interface flux, we observe the same odd-even behavior of the convergence order as described in \cite{hindenlangOrderReductionEntropy2020} for polynomial SBP operators and the FSBP operators for $\mathcal{E}_2$.
In our numerical examples, we use $a(t) = 1 - e^{-5t}$ and $b(t) = e^{-5t}$, starting from the constant state $u_{\mathrm{ex}}(\cdot, 0) = 1$ and relaxing toward the steady state $u_\infty = e^{\alpha x}$ as $t$ increases.
We consider the rates $\alpha \in \{\tfrac{1}{2}, 1, 2\}$, which set the steepness of the exponential profile.
In \cref{fig:burgers_mms_profile} the relaxation to $e^{\alpha x}$ is shown for $\alpha = 1$.

\begin{figure}[tb]
	\centering
	\includegraphics{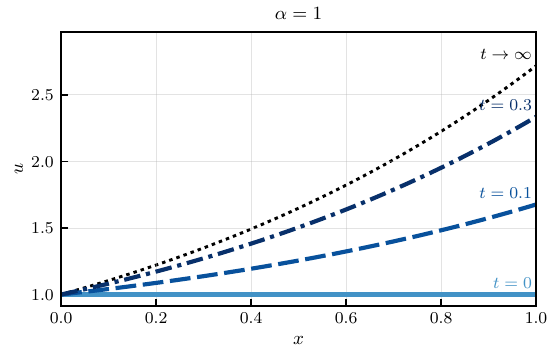}
    \caption{Manufactured solution $u_\mathrm{ex} = (1-e^{-5 t})e^x + e^{-5t}$ described in \cref{eq:burgers_mms} at different times for $\alpha = 1$.}
	\label{fig:burgers_mms_profile}
\end{figure}

The space $\mathcal{E}_2$ is tailored to the problem at $\alpha = 1$ because then the solution is in the span of $\{1, e^x\}$.
Our construction procedure requires $N = 4$ nodes per element for the FSBP operator for $\mathcal{E}_2$---one more node than $\dim \mathcal{E}_2 = 3$.
For this reason, we compare the FSBP operator against the polynomial DGSEM operators for $\mathcal{P}_2$ and $\mathcal{P}_3$ with $N = 3$ and $N = 4$ nodes per element in \cref{fig:burgers_mms}.
\Cref{tab:burgers_mms_eoc} lists the corresponding errors and EOCs for the operators for $\mathcal{P}_3$ and $\mathcal{E}_2$ with $4$ nodes and for $\mathcal{P}_4$ and $\mathcal{E}^{*}$ with $5$ nodes.
At the same number of degrees of freedom, the FSBP operator for $\mathcal{E}_2$ is uniformly more accurate than the operator for $\mathcal{P}_3$ (by a factor of $\approx 2.29$) and attains fourth-order convergence---matching the cubic operator $\mathcal{P}_3$ of equal cost---whereas the quadratic operator $\mathcal{P}_2$ is only third order.
The gap between the almost parallel lines of $\mathcal{E}_2$ and $\mathcal{P}_3$ in \cref{fig:burgers_mms} for $\alpha=1$ visualizes the improved factor.
For $\alpha = 1/2$, this gap is even larger, indicating that $\mathcal{E}_2$ performs even better than at $\alpha = 1$, despite a mismatch between solution space and approximation space.
A possible explanation is that for $\alpha = 1/2$ the flux derivative of the steady state is in $\mathcal{E}_2$.
For $\alpha = 2$, the accuracy gain of $\mathcal{E}_2$ shrinks substantially, and the EOC line plots almost overlap.
This motivates the construction of an operator for $\mathcal{E}^{*} = \mathrm{span}\{1, x, e^x, e^{2x}\}$, which provides exact representations of the flux derivative and the solution for $\alpha = 1$.
\Cref{fig:burgers_mms,tab:burgers_mms_eoc} confirm our choice: we obtain a numerical solution at machine precision, hence outperforming any other operator by a large margin.

\begin{table}[tb]
	\centering
	\small
	\setlength{\tabcolsep}{4.5pt}
    \caption{Relative errors and EOC for numerical solutions to Burgers' equation with a manufactured solution as in \cref{eq:burgers_mms} with $\alpha = 1$ at $t=1$. 
    We compare operators with the same number of degrees of freedom. 
    The FSBP operators for $\mathcal{E}_2 = \mathrm{span}\{1, x, e^x\}$ has $4$ nodes and the one for $\mathcal{E}^{*} = \mathrm{span}\{1, x, e^x, e^{2x}\}$ has $5$ nodes. 
    We compare them with the operators for the polynomial spaces $\mathcal{P}_3$ and $\mathcal{P}_4$, respectively. 
    In bold, we show the lower relative error at the same number of nodes. 
    We do not show the EOC of $\mathcal{E}^{*}$ and the error ratio for $\mathcal{P}_4$ versus $\mathcal{E}^{*}$ because $\mathcal{E}^{*}$ is (almost) at machine precision from the start.}
	\label{tab:burgers_mms_eoc}
	\begin{tabular}{c ll ll ll l l}
	\toprule
	 & \multicolumn{4}{c}{$4$ nodes} & \multicolumn{3}{c}{$5$ nodes} &  \\
	\cmidrule(lr){2-5}\cmidrule(lr){6-8}
	 & \multicolumn{2}{c}{$\mathcal{P}_3$ ($\dim \mathcal{P}_3 = 4$)} & \multicolumn{2}{c}{$\mathcal{E}_2$ ($\dim \mathcal{E}_2 = 3$)} & \multicolumn{2}{c}{$\mathcal{P}_4$ ($\dim \mathcal{P}_4 = 5$)} & $\mathcal{E}^{*}$ ($\dim \mathcal{E}^{*} = 4$) & $\mathcal{P}_3/\mathcal{E}_2$ \\
	\cmidrule(lr){2-3}\cmidrule(lr){4-5}\cmidrule(lr){6-7}
	$K$ & error & EOC & error & EOC & error & EOC & error &  \\
	\midrule
	2 & $3.94\times10^{-4}$ & --- & $\mathbf{1.72\times10^{-4}}$ & --- & $1.74\times10^{-5}$ & --- & $\mathbf{7.41\times10^{-15}}$ & $2.28$ \\
	4 & $2.35\times10^{-5}$ & $4.06$ & $\mathbf{1.03\times10^{-5}}$ & $4.07$ & $5.23\times10^{-7}$ & $5.06$ & $\mathbf{1.73\times10^{-16}}$ & $2.29$ \\
	8 & $1.43\times10^{-6}$ & $4.04$ & $\mathbf{6.26\times10^{-7}}$ & $4.04$ & $1.60\times10^{-8}$ & $5.03$ & $\mathbf{9.09\times10^{-16}}$ & $2.29$ \\
	16 & $8.84\times10^{-8}$ & $4.02$ & $\mathbf{3.86\times10^{-8}}$ & $4.02$ & $4.92\times10^{-10}$ & $5.02$ & $\mathbf{1.30\times10^{-15}}$ & $2.29$ \\
	32 & $5.48\times10^{-9}$ & $4.01$ & $\mathbf{2.40\times10^{-9}}$ & $4.01$ & $1.53\times10^{-11}$ & $5.01$ & $\mathbf{1.33\times10^{-15}}$ & $2.29$ \\
	64 & $3.42\times10^{-10}$ & $4.01$ & $\mathbf{1.49\times10^{-10}}$ & $4.01$ & $4.76\times10^{-13}$ & $5.00$ & $\mathbf{1.03\times10^{-15}}$ & $2.29$ \\
	128 & $2.13\times10^{-11}$ & $4.00$ & $\mathbf{9.31\times10^{-12}}$ & $4.00$ & $1.68\times10^{-14}$ & $4.82$ & $\mathbf{1.65\times10^{-15}}$ & $2.29$ \\
	\bottomrule
\end{tabular}

\end{table}

\begin{figure}[tb]
	\centering
	\includegraphics{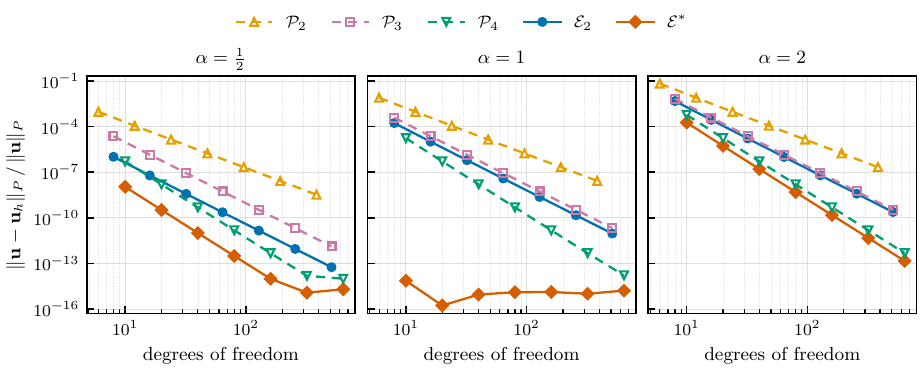}
    \caption{Comparison of the convergence behavior of solutions obtained with FSBP operators for the spaces $\mathcal{E}_2 = \mathrm{span}\{1, x, e^x\}$ and $\mathcal{E}^{*} = \mathrm{span}\{1, x, e^x, e^{2x}\}$ against standard DGSEM polynomial SBP operators for $\mathcal{P}_2$, $\mathcal{P}_3$ and $\mathcal{P}_4$ for Burgers' equation with a manufactured solutions $u_\mathrm{ex} = (1-e^{-5 t})e^{\alpha x} + e^{-5t}$ at $t = 1$ as in \cref{eq:burgers_mms}, for the rates $\alpha \in \{\tfrac{1}{2}, 1, 2\}$ (left to right).}
	\label{fig:burgers_mms}
\end{figure}

\subsection{Compressible Euler equations}
\label{subsec:num_euler1d}

Consider the one-dimensional compressible Euler equations
\begin{equation}\label{eq:euler_1d}
	\partial_t
	\begin{pmatrix} \rho \\ \rho v \\ E \end{pmatrix}
	+ \partial_x
	\begin{pmatrix} \rho v \\ \rho v^2 + p \\ (E + p)\, v \end{pmatrix}
	= 0,
	\quad
	E = \frac{p}{\gamma - 1} + \frac{1}{2} \rho v^2,
\end{equation}
with ratio of specific heats $\gamma = 7/5$ and the mathematical entropy $S = -\rho s / (\gamma - 1)$, where $s = \ln\big( p \rho^{-\gamma} \big)$ is the thermodynamic entropy.

\subsection{Density wave for the compressible Euler equations}
\label{subsec:num_euler_density_wave}

We take the periodic domain $\Omega = [-1, 1]$ with initial data as in \cite[Sec.~3]{gassnerStabilityIssuesEntropyStable2022}:
\begin{equation}\label{eq:euler_density_wave}
	\rho(x, 0) = 1 + 0.5 \sin(2 \pi x), \quad
	v(x, 0) = \tfrac{1}{10}, \quad
	p(x, 0) = 20,
\end{equation}
where we reduce the amplitude from $0.98$ to $0.5$ to keep the density bounded away from vacuum.
Velocity and pressure remain constant, so that \cref{eq:euler_1d} reduces to linear advection of the density and
\begin{equation}\label{eq:euler_density_wave_exact}
	\rho_{\mathrm{ex}}(x, t) = 1 + 0.5 \sin\big( 2 \pi (x - v t) \big)
\end{equation}
is the exact solution for all $t$.

\subsubsection*{Conservation and entropy}

For a smooth, periodic flow integrated with the entropy-conservative flux and relaxation time stepping, the polynomial (SBP), trigonometric (FSBP) and exponential (FSBP) operators conserve the discrete mass $\mathbf{1}^T P \bm{\rho}$ up to a small error (\cref{cor:conservation}), as illustrated in \cref{fig:euler_mass}.
The mass conservation errors of $\mathcal{P}_4$ and $\mathcal{E}^{*}$ increase over time, reaching up to $10^{-13}$ at $t=100$.
We suspect that this increase is due to floating-point inaccuracies and the accumulation of rounding errors.
To strengthen this hypothesis, we reran the same simulation at higher accuracy using more bits for floating-point representation and observed that the errors decreased for all conserved variables.

\begin{figure}[tb]
  \centering
  \includegraphics{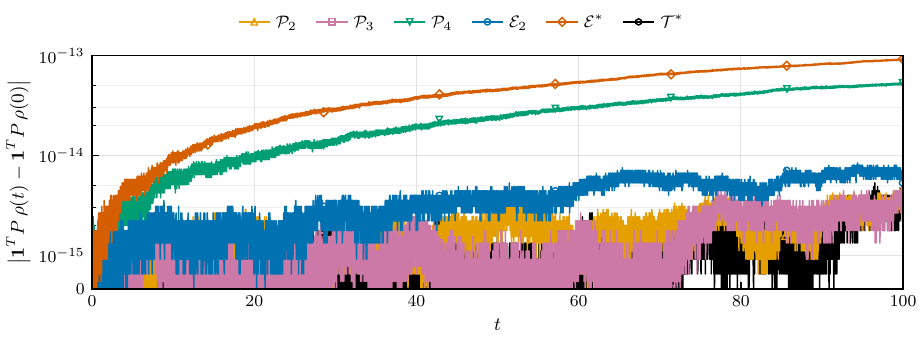}
    \caption{Errors in the discrete mass $\mathbf{1}^T P \boldsymbol{\rho}$ of the numerical solutions to the compressible Euler equations for the density wave \cref{eq:euler_density_wave}. 
    We compare solutions obtained with FSBP operators for the spaces $\mathcal{E}_2 = \mathrm{span}\{1, x, e^x\}$, $\mathcal{E}^{*} = \mathrm{span}\{1, x, e^x, e^{2x}\}$ and $\mathcal{T}^{*} = \mathrm{span}\{1, x, \sin 2\pi x, \cos 2\pi x\}$ against solutions obtained with standard DGSEM polynomial SBP operators $\mathcal{P}_2$, $\mathcal{P}_3$ and $\mathcal{P}_4$. 
    We used $16$ elements and the relaxation RK method for time stepping and entropy-conservative surface fluxes.}
  \label{fig:euler_mass}
\end{figure}

\cref{fig:euler_entropy} shows the corresponding changes in the total entropy $\mathbf{1}^T P\, S(\mathbf{u})$.
We observe that the entropy errors are bounded at magnitude $\approx 10^{-12}$, confirming \cref{thm:coupled}.

\begin{figure}[tb]
  \centering
  \includegraphics{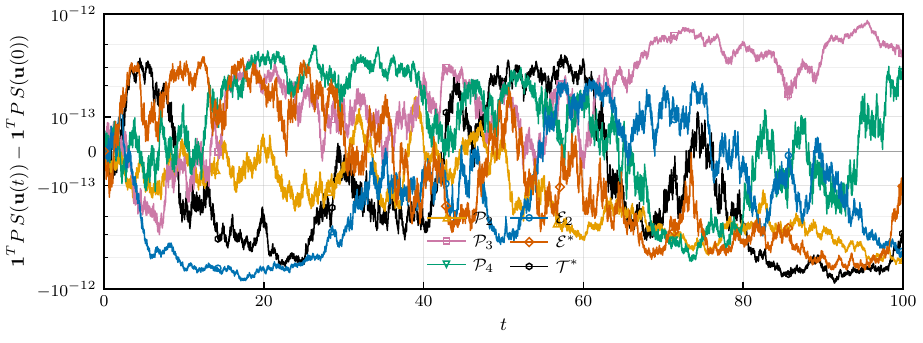}
    \caption{Changes in the total entropy $\mathbf{1}^T P\, S(\mathbf{u})$ of numerical solutions to the compressible Euler equations for the density wave \cref{eq:euler_density_wave}. 
    We compare solutions obtained with FSBP operators for the spaces $\mathcal{E}_2 = \mathrm{span}\{1, x, e^x\}$, $\mathcal{E}^{*} = \mathrm{span}\{1, x, e^x, e^{2x}\}$ and $\mathcal{T}^{*} = \mathrm{span}\{1, x, \sin 2\pi x, \cos 2\pi x\}$ against solutions obtained with standard DGSEM polynomial SBP operators $\mathcal{P}_2$, $\mathcal{P}_3$ and $\mathcal{P}_4$. 
    The vertical axis uses a symmetric logarithmic scale to resolve both positive and negative changes. 
    We used $16$ elements and the relaxation RK method for time stepping and entropy-conservative surface fluxes.}
  \label{fig:euler_entropy}
\end{figure}

\subsubsection*{Accuracy}

The convergence plot in \cref{fig:euler_accuracy} shows that the polynomial SBP operators of degree $d$ have an asymptotic EOC of $d+1$, matching their FSBP counterparts at the same node count---recall \cref{rem:eoc_vs_design_order}.
While $\mathcal{P}_3$ and $\mathcal{E}_2$ with $4$ nodes and $\mathcal{P}_4$ and $\mathcal{E}^{*}$ with $5$ nodes show almost identical convergence behavior, $\mathcal{T}^{*}$ performs slightly better with a smaller error at the same EOC.
Even though $\mathcal{T}^{*}$ contains the exact solution and the flux derivative at all times, we do not observe the same behavior as for the manufactured solution for Burgers' equation with $\mathcal{E}^{*}$.
We attribute this discrepancy to the usage of logarithmic means in the entropy-conservative flux \cite{ranocha2018comparison}, which moves the derivative of the numerical flux out of $\mathcal{T}^{*}$.
Using the standard weak-form DG formulation rather than entropy-conservative flux differencing, we directly differentiate the flux and again retain a semi-discretization accurate up to machine precision.
Details are available in our reproducibility repository \cite{haase2026reproducibility}.

\begin{figure}[tb]
  \centering
  \includegraphics{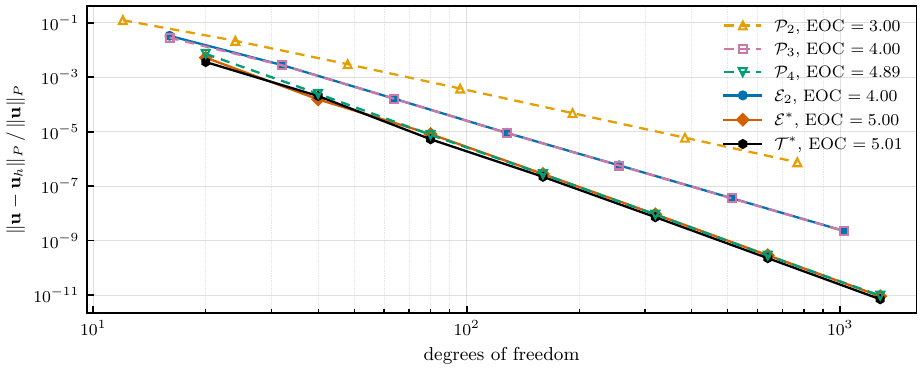}
    \caption{Comparison of the convergence behavior of solutions obtained with our FSBP operators for the spaces $\mathcal{E}_2 = \mathrm{span}\{1, x, e^x\}$, $\mathcal{E}^{*} = \mathrm{span}\{1, x, e^x, e^{2x}\}$ and $\mathcal{T}^{*} = \mathrm{span}\{1, x, \sin 2\pi x, \cos 2\pi x\}$ against standard DGSEM polynomial operators $\mathcal{P}_2$, $\mathcal{P}_3$ and $\mathcal{P}_4$ for the compressible Euler equations with the density wave \cref{eq:euler_density_wave} at $t = 2$. We used entropy-stable surface fluxes. The EOCs are computed from the last two data points. For the Euler equations, we use $\rho$ to compute the errors.}
  \label{fig:euler_accuracy}
\end{figure}

\subsection{Modified Sod shock tube}
\label{subsec:num_euler_sod}

We now consider the one-dimensional compressible Euler equations \cref{eq:euler_1d} on the domain $\Omega = [0, 1]$ and with the Riemann initial data of \cite[Sec.~6.4, Table~6.2, Test~1 and Fig.~6.8]{toro2013riemann}:
\begin{equation}\label{eq:modified_sod}
	(\rho, v, p)(x, 0) =
	\begin{cases}
		(1, \tfrac{3}{4}, 1), & x < x_0, \\
		(\tfrac{1}{8}, 0, \tfrac{1}{10}), & x > x_0,
	\end{cases}
	\quad x_0 = \tfrac{3}{10},
\end{equation}
which we integrate to $t = 1/5$.
Unlike the classical Sod problem, whose left rarefaction is entirely subsonic, the left state moves at $v_L = 3/4$, which makes the left rarefaction transonic. 
The characteristic speed $v - c$ changes sign inside the fan, so that the fan contains the sonic point $v = c$, located at $x = x_0$ for all $t$.
Schemes that violate the entropy condition---classically, linearized Riemann solvers without an entropy fix---can produce a spurious entropy glitch or even an expansion shock at this sonic point \cite[Sec.~11.4]{toro2013riemann}.
No wave reaches a boundary before $t = 1/5$, so we use constant boundary fluxes matching the initial data.
This is the only test with a nonzero $\sigma_S$ in \cref{eq:residuals}.
As the thermodynamic entropy $s$ is constant along particle paths in smooth flows \cite[Prop.~3.8]{toro2013riemann}, the data \cref{eq:modified_sod} with $p_L = \rho_L = 1$ give $S = 0$ left of the contact, so only the two regions right of it contribute.
By self-similarity, the region between the contact and the shock front grows linearly, while the region right of the shock shrinks linearly. 
Hence, $\sigma_S = \dot S_{\mathrm{exact}} (t_1 - t_0)$ with the constant rate
\begin{equation}\label{eq:sod_entropy_rate}
	\dot S_{\mathrm{exact}} = S(\boldsymbol{u}^*_R) \, (\lambda_R - v^*) - S(\boldsymbol{u}_R) \, \lambda_R ,
\end{equation}
where the contact speed $v^*$, the post-shock state $\boldsymbol{u}^*_R$ and the shock speed $\lambda_R$ are taken from the exact Riemann solution \cite[Eqs.~(4.9),~(4.57)~and~(4.59)]{toro2013riemann}.

\subsubsection*{Robustness}

In \cref{fig:euler_sod}, we solve the Sod shock-tube problem with the entropy-stable scheme and compare the three state components (density $\rho$, momentum $\rho v$, and total energy $E$) against the exact solution of the Riemann problem.
The top row shows the polynomial SBP operator and the bottom row the exponential FSBP operator. 
Both capture the shock and contact discontinuity at the correct location but with oscillations.

\begin{figure}[tb]
  \centering
  \includegraphics{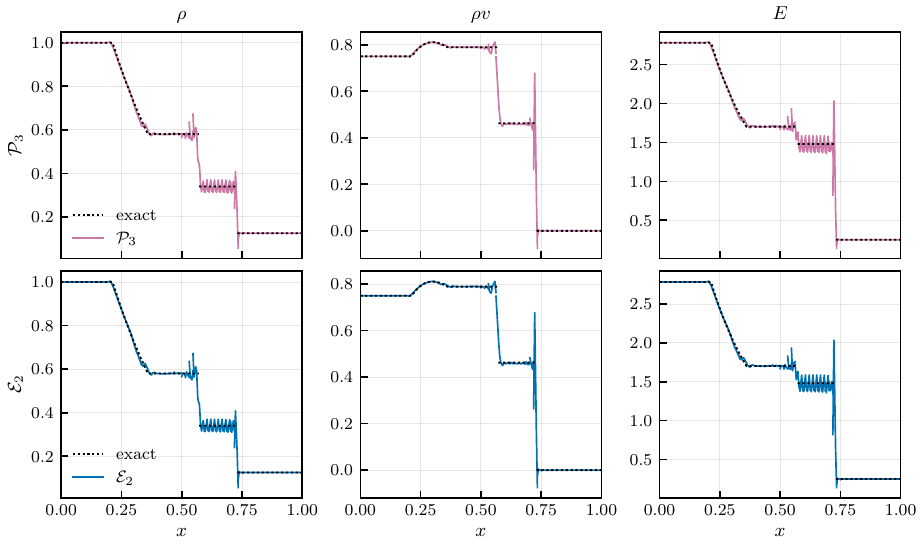}
    \caption{Comparison of numerical solutions to the modified Sod shock tube with initial data \cref{eq:modified_sod} against the exact solution of the Riemann problem at $t = 1/5$. Shown are the density $\rho$, the momentum $\rho v$, and the total energy $E$ obtained with the standard DGSEM polynomial SBP operator for $\mathcal{P}_3$ (top row) and with the FSBP operator for the space $\mathcal{E}_2 = \mathrm{span}\{1, x, e^x\}$ (bottom row), both on $64$ elements with entropy-stable surface fluxes and the SSP-RK method for time stepping.}
  \label{fig:euler_sod}
\end{figure}

\subsubsection*{Conservation}

As for the previous examples, we observe in \cref{fig:euler_sod_conservation} a small increase in the conservation error for both operators.
The magnitude of the error of our exponential FSBP operator for $\mathcal{E}_2$ is comparable to the error of the polynomial operator for $\mathcal{P}_3$.
As expected, the scheme is entropy dissipative, and in the bottom left panel of \cref{fig:euler_sod_conservation}, the entropy is dissipated at an almost constant rate for our numerical schemes, closely matching the exact entropy dissipation.
Both schemes dissipate entropy almost identically.
Furthermore, in the bottom right panel of \cref{fig:euler_sod_conservation}, we show the residual entropy change as in \cref{eq:residuals}, which takes the entropy dissipation at shocks into account.
Although the FSBP operators lead to schemes that dissipate entropy, they do not match the entropy dissipation of the exact solution, and the error exceeds $10^{-4}$ in magnitude.
This is expected, as the schemes presented cannot dissipate entropy within elements but only at element boundaries, which is incorrect once shocks are present and may lead to entropy over- and underproduction.

\begin{figure}[tb]
  \centering
  \includegraphics{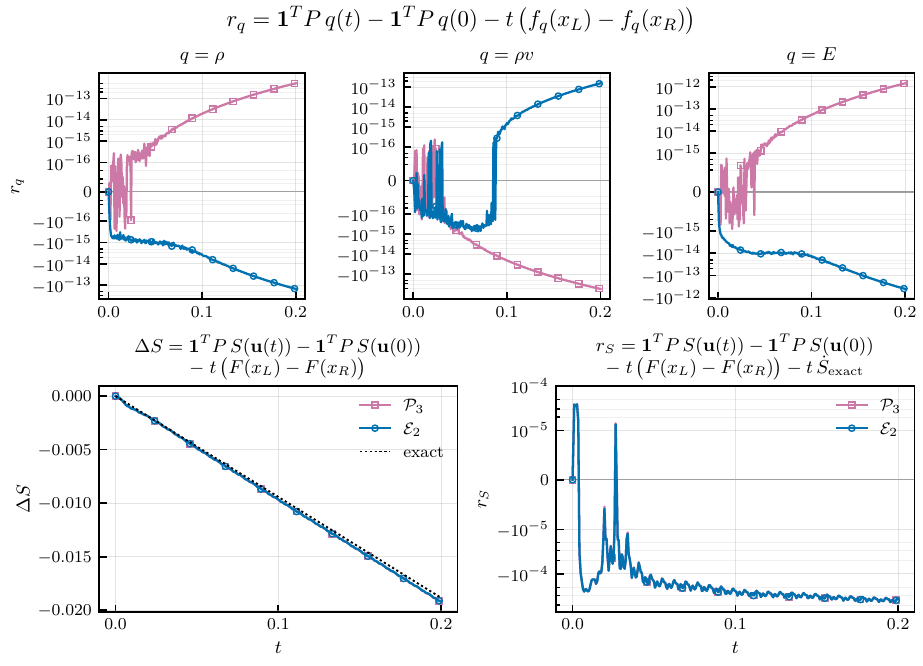}
    \caption{Time-integrated residuals \cref{eq:residuals} for the modified Sod shock tube with initial data \cref{eq:modified_sod}. The top row shows the residual $r_q$ of the discrete mass, momentum and total energy, that is, the change of the invariant corrected for the flux through the domain boundaries. The bottom left panel shows the change $\Delta S$ of the total entropy with only the boundary flux subtracted, against the entropy change $\sigma_S$ of the exact solution; the bottom right panel shows the entropy residual $r_S$, in which $\sigma_S$ is subtracted as well. We compare the FSBP operator for the space $\mathcal{E}_2 = \mathrm{span}\{1, x, e^x\}$ against the standard DGSEM polynomial SBP operator for $\mathcal{P}_3$, both on $64$ elements. All vertical axes except in the bottom-left panel use a symmetric logarithmic scale. We used the relaxation RK method for time stepping and entropy-stable surface fluxes.}
  \label{fig:euler_sod_conservation}
\end{figure}

\subsection{Two-dimensional compressible Euler equations}
\label{subsec:num_euler2d}

Finally, we briefly show that the FSBP operators and the flux-differencing framework extend to multiple dimensions using a tensor-product approach.
We consider the two-dimensional compressible Euler equations on a tensor-product Cartesian grid, where we apply the one-dimensional FSBP operators of \Cref{sec:FSBP} dimension by dimension.

\subsubsection*{Kelvin--Helmholtz instability}

As a more demanding test of nonlinear robustness, we consider the Kelvin--Helmholtz instability for the two-dimensional compressible Euler equations \cite{ruedaramirez2021subcell,glaubitz2025generalized}.
On the periodic domain $\Omega = [-1,1]^2$ with ratio of specific heats $\gamma = 7/5$, the initial condition is
\begin{equation}\label{eq:khi_ic}
  \rho = \tfrac{1}{2} + \tfrac{3}{4} B, \quad
  p = 1, \quad
  v_1 = \tfrac{1}{2}(B - 1), \quad
  v_2 = \tfrac{1}{10}\sin(2\pi x),
\end{equation}
with the smoothed shear profile $B(x,y) = \tanh(15 y + 7.5) - \tanh(15 y - 7.5)$.
The two shear layers roll up into vortices, and the flow becomes increasingly under-resolved, making this a demanding test of the scheme's robustness.
We integrate to the final time $t = 3$ with the entropy-stable flux and compare the equal-cost pair $\mathcal{P}_3$ and $\mathcal{E}_2$.
\Cref{fig:khi} shows the density at $t = 3$. 
Both operators remain stable up to that time.
Neither run uses shock capturing or a positivity limiter, and both operators fail just before $t \approx 3.6$ on this mesh.
Without flux-differencing and instead using the standard weak formulation of the DG method, the scheme crashes earlier; for $\mathcal{P}_3$ it crashes at $t \approx 2.25$ and for $\mathcal{E}_2$ at $t \approx 2.05$.

\begin{figure}[tb]
  \centering
  \includegraphics{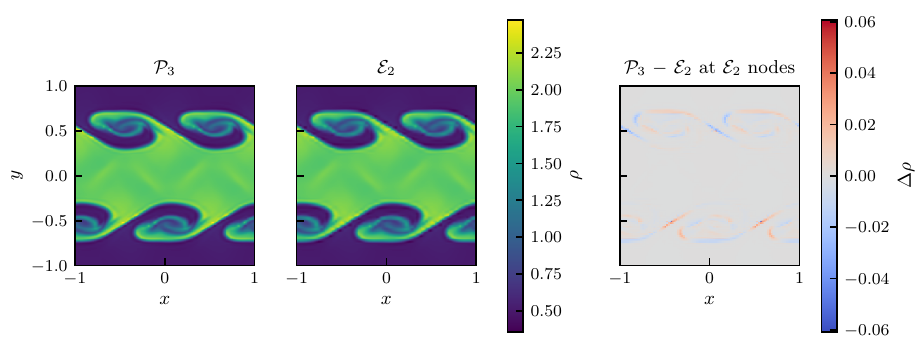}
    \caption{Density of numerical solutions to the Kelvin--Helmholtz instability with initial data \cref{eq:khi_ic} at $t = 3$ on $32 \times 32$ elements. We compare the standard DGSEM polynomial operator for $\mathcal{P}_3$ (left) against our FSBP operator for the space $\mathcal{E}_2 = \mathrm{span}\{1, x, e^x\}$ (middle), both with $4$ nodes per element and direction, using entropy-stable surface fluxes. The right panel shows their difference, evaluated at the nodes of the $\mathcal{E}_2$ operator.}
  \label{fig:khi}
\end{figure}
 
\section{Summary and outlook}
\label{sec:summary}

We have extended the flux-differencing framework from polynomial SBP operators to FSBP operators for general (potentially non-polynomial) function spaces.
Under two mild assumptions on the operator---a diagonal norm and the inclusion of the constants in the function space $\mathcal{F}$---the resulting semi-discretization is conservative in the sense of Lax--Wendroff (\cref{cor:conservation}), semi-discretely entropy-conservative on a single element (\cref{thm:entropy_cons}), and entropy-stable across element interfaces (\cref{thm:coupled}).
Complementing these structural properties, we have provided what is, to the best of our knowledge, the first systematic accuracy analysis of FSBP operators:
Their truncation error is governed by how well $\mathcal{F}$ simultaneously approximates a function and its derivative (\cref{lem:truncation}), and their design order is characterized by an algebraic Wronskian criterion on $\mathcal{F}$ (\cref{prop:design_order}), with suitable non-polynomial spaces attaining the same design order as polynomial spaces of equal dimension.
Moreover, the flux-differencing semi-discretization retains the design order of the underlying FSBP operator (\cref{prop:accuracy,cor:consistency}), while tailoring $\mathcal{F}$ to the solution can substantially reduce the error constant (\cref{rem:tailored_accuracy}).
The analysis is element-local and hence also covers element-wise varying function spaces and operators (\cref{cor:varying_spaces,rem:mixing_spaces}).
The construction is operator-independent: the entropy-conservative two-point flux is reused unchanged from the polynomial case, and only the underlying function space differs.
Our numerical experiments for the Burgers equation and the one- and two-dimensional compressible Euler equations confirm the discrete conservation and entropy properties and show that function spaces tailored to the solution---here exponential spaces---can be more accurate than polynomial bases of the same cost, in line with our accuracy analysis.

Several directions remain open.
On the theoretical side, extending the truncation-error and design-order analysis of \Cref{sec:accuracy} to a full convergence analysis of the flux-differencing semi-discretization remains an open problem.

Closely related is the gap between theory and practice for the design order itself.
\Cref{prop:design_order} guarantees the design order $\dim \mathcal{F} - 1$, whereas the Gaussian FSBP operators in \Cref{sec:numerics} converge one order faster than this suggests, namely at the same rate as polynomial SBP operators with the same number $N$ of nodes, which indicates the design order $N - 1$ (\cref{rem:eoc_vs_design_order}).

Throughout, we have restricted ourselves to closed operators, whose grids include the boundary nodes. Extending the framework to open FSBP operators, for which boundary data enter through a separate projection, is a natural next step.
Beyond the exponential spaces considered here, the construction applies verbatim to other solution-tailored spaces (trigonometric, radial-basis, or data-driven).
Furthermore, allowing the function space to vary is particularly promising: spatial variation between elements is already covered by the present element-local construction, whereas letting the space evolve in time---for instance, to follow a moving boundary layer or shock---additionally requires an entropy-stable projection between bases, which we leave for future work.

Finally, curved meshes to represent complex geometries is another avenue, and arguably the one most relevant for practitioners.
For polynomial SBP operators, conservative and entropy-stable flux differencing on curvilinear elements is well established \cite{gassner2016split,wintermeyer2017entropy,crean2018entropy,fernandez2019extension}, where the scheme is applied to the contravariant fluxes on a reference element, and conservation, free-stream preservation, and entropy stability rest on the SBP property together with discrete metric identities.
For FSBP operators, the metric terms would now be computed by $\mathcal{F}$-exact operators, which do not necessarily differentiate polynomial element mappings exactly, and a function space tailored in physical coordinates is transformed by the element mapping into an element-dependent space on the reference element.
Hence, either the tailoring has to be carried out in reference coordinates, or an operator has to be constructed for every element---forfeiting the translation invariance exploited in \Cref{sec:numerics}---for which the multi-dimensional constructions of \cite{glaubitz2023multi} offer a starting point.

\section*{Acknowledgements}
JG acknowledges support from the Swedish Research Council (VR) Starting Grant \#2025-05370, the Zenith Career Development Grant \#26.07, the Swedish e-Science Research Centre (SeRC), and the National Academic Infrastructure for Supercomputing in Sweden (NAISS) grants \#2025/22-1599 and \#2024/22-1207.

\section*{Tool disclosure}
Generative AI (Claude) assisted in preparing this work.  

\appendix 
\newpage
\section{Notation}
\label{app:notation}

For convenience, we summarize the notation used throughout the paper in \cref{tab:notation}.
The guiding convention is as follows.
An upright bold symbol such as $\mathbf{u}$ denotes a \emph{nodal vector}, i.e., the values of a scalar grid function at the $N$ grid points. 
In contrast, a bold italic symbol such as $\boldsymbol{u}$ denotes a \emph{vector-valued function} or, when evaluated at a point, a state in $\R^m$.
A subscript indexes the grid node (and, in the multi-element setting, the element).
The discrete solution of an $m$-component system is thus an $N \times m$ array $\mathbf{u}$ whose row $\boldsymbol{u}_i \in \R^m$ is the state at node $i$.
The matrix operators $D, P, Q, R, B$ act on the nodal index and componentwise across the $m$ fields.

\begin{table}[htbp]
\centering
\caption{Summary of the notation used throughout the paper.}
\label{tab:notation}
\renewcommand{\arraystretch}{1.25}
\begin{tabular}{@{}l p{0.64\textwidth}@{}}
\toprule
Symbol & Meaning \\
\midrule
\multicolumn{2}{@{}l}{\emph{Continuous quantities}} \\
$\boldsymbol{u}, \boldsymbol{f}$ & state $\boldsymbol{u} \colon \R \times \R_{\ge 0} \to \R^m$ and flux $\boldsymbol{f} \colon \R^m \to \R^m$ \\
$S, F$ & entropy and entropy flux, $S, F \colon \R^m \to \R$ (scalar valued) \\
$\boldsymbol{w}, \psi$ & entropy variables $\boldsymbol{w} = S'(\boldsymbol{u})^T \in \R^m$ and potential $\psi = \boldsymbol{w}^T \boldsymbol{f} - F$ \\
$\boldsymbol{f}^{\mathrm{vol}}, \boldsymbol{f}^{\mathrm{surf}}$ & two-point volume and surface fluxes, $\R^m \times \R^m \to \R^m$ \\
$\Lambda$ & dissipation matrix of the entropy-stable surface flux \\
\midrule
\multicolumn{2}{@{}l}{\emph{FSBP operators} (act on the nodal index, componentwise across fields)} \\
$D = P^{-1} Q$ & FSBP derivative operator in $\R^{N \times N}$ \\
$P$ & diagonal norm $\diag(p_1, \dots, p_N)$ with weights $p_i > 0$ \\
$Q$ & satisfies the SBP property $Q + Q^T = R^T B R$ \\
$R, B$ & restriction $R \in \R^{2 \times N}$ and boundary operator $B = \diag(-1, 1)$ \\
$\mathbf{1}$ & vector of ones $[1, \dots, 1]^T \in \R^N$ \\
\midrule
\multicolumn{2}{@{}l}{\emph{Discrete quantities on a single element}} \\
$\mathbf{u}$ & discrete solution (grid function), an array in $\R^{N \times m}$ \\
$\boldsymbol{u}_i$ & nodal state $\boldsymbol{u}(x_i) \in \R^m$ at node $i$ (row $i$) \\
$\mathbf{f}$ & nodal flux array, with row $i$ equal to $\boldsymbol{f}(\boldsymbol{u}_i)$ \\
$\vol$ & volume term, with row $\vol_i = \sum_n 2 D_{in}\, \boldsymbol{f}^{\mathrm{vol}}(\boldsymbol{u}_i, \boldsymbol{u}_n) \in \R^m$ \\
$\mathbf{f}^{\mathrm{surf}}$ & boundary surface fluxes $\boldsymbol{f}^{\mathrm{surf}}_L, \boldsymbol{f}^{\mathrm{surf}}_R \in \R^m$ \\
$\mathbf{w}$ & nodal entropy variables, with row $i$ equal to $\boldsymbol{w}_i = \boldsymbol{w}(\boldsymbol{u}_i)$ \\
$\bar{\boldsymbol{f}}_{i+\frac{1}{2}}$ & subcell (numerical) flux between nodes $i$ and $i+1$, in $\R^m$ \\
$\mathbf{1}^T P \mathbf{u}$ & discrete conserved quantity $\sum_i p_i \boldsymbol{u}_i \in \R^m$ \\
\midrule
\multicolumn{2}{@{}l}{\emph{Multi-element setting}} \\
$\mathbf{u}_k$ & discrete solution on element $\Omega_k$, an array in $\R^{N_k \times m}$ \\
$\boldsymbol{u}_{k,i}$ & state at node $i$ of element $\Omega_k$, in $\R^m$ \\
$\boldsymbol{u}^\pm_{k+\frac{1}{2}}, \boldsymbol{w}^\pm_{k+\frac{1}{2}}$ & left/right interface traces at the interface $x_{k+\frac{1}{2}}$ \\
\bottomrule
\end{tabular}
\end{table}
\section{Proof of \cref{cor:conservation}}
\label{app:conservation}

We start by showing that the flux-differencing volume term inherits the telescoping (subcell finite-volume) structure of the polynomial case \cite{fisher2013discretely,fisher2013high}.
This will allow us to prove \cref{cor:conservation} subsequently.

\begin{lemma}[Telescoping form]\label{lem:telescoping}
	Let $D = P^{-1}Q$ be a diagonal-norm $\mathcal{F}$-exact FSBP operator with $1 \in \mathcal{F}$ and let $\boldsymbol{f}^{\mathrm{vol}}$ be consistent and symmetric.
	Furthermore, let
	\begin{equation}\label{eq:subcell_flux}
		\bar{\boldsymbol{f}}_{i+\frac{1}{2}}
			:= \sum_{k=1}^{i} \sum_{l=i+1}^{N} 2 Q_{kl}\, \boldsymbol{f}^{\mathrm{vol}}(\boldsymbol{u}_k, \boldsymbol{u}_l),
			\quad i=1,\dots,N-1.
	\end{equation}
	Then, for every interior node $i \in \{2, \dots, N-1\}$, we have
	\begin{equation}\label{eq:telescoping}
		P_{ii}\, \vol_i = \bar{\boldsymbol{f}}_{i+\frac{1}{2}} - \bar{\boldsymbol{f}}_{i-\frac{1}{2}}.
	\end{equation}
	Moreover, the subcell fluxes \cref{eq:subcell_flux} are consistent, i.e., $\bar{\boldsymbol{f}}_{i+\frac{1}{2}} = \boldsymbol{f}(\boldsymbol{u})$ whenever $\boldsymbol{u}_1 = \dots = \boldsymbol{u}_N = \boldsymbol{u}$.
\end{lemma}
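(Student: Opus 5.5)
The plan is to work with $Q = PD$ and use the symmetry of $\boldsymbol{f}^{\mathrm{vol}}$ to reduce the telescoping identity to a statement about row sums of $Q$. Write $F_{kl} := \boldsymbol{f}^{\mathrm{vol}}(\boldsymbol{u}_k,\boldsymbol{u}_l)$, so that $F_{kl} = F_{lk}$. Since $P$ is diagonal, $P_{ii}\vol_i = \sum_{n} 2 Q_{in} F_{in}$. The goal is to show that $\bar{\boldsymbol{f}}_{i+\frac12} - \bar{\boldsymbol{f}}_{i-\frac12}$ produces exactly this sum for interior $i$.

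First I compute the difference of the double sums in \cref{eq:subcell_flux}. The index set for $\bar{\boldsymbol{f}}_{i+\frac12}$ is $\{k\le i,\ l\ge i+1\}$, and for $\bar{\boldsymbol{f}}_{i-\frac12}$ it is $\{k\le i-1,\ l\ge i\}$. Removing the common block $\{k\le i-1,\ l\ge i+1\}$ gives
\begin{equation*}
	\bar{\boldsymbol{f}}_{i+\frac12} - \bar{\boldsymbol{f}}_{i-\frac12}
	= \sum_{l=i+1}^{N} 2Q_{il}F_{il} - \sum_{k=1}^{i-1} 2Q_{ki}F_{ki}.
\end{equation*}
Next I use (iii) of \cref{def:FSBP}. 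For an interior index $i$ with $k\neq i$, we have $(R^TBR)_{ki}=0$, hence $Q_{ki} = -Q_{ik}$. Together with $F_{ki}=F_{ik}$, the second sum becomes $+\sum_{k<i} 2Q_{ik}F_{ik}$. The difference therefore equals $\sum_{n\neq i} 2Q_{in}F_{in}$.

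It remains to handle the diagonal term $2Q_{ii}F_{ii}$. For interior $i$, (iii) gives $2Q_{ii} = (R^TBR)_{ii} = 0$, so this term vanishes and \cref{eq:telescoping} follows. Note that $D\mathbf{1}=0$ from \cref{lem:constants} is not needed for the interior identity. It enters the consistency claim instead.

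For consistency, set all states equal to $\boldsymbol{u}$, so $F_{kl} = \boldsymbol{f}(\boldsymbol{u})$ and $\bar{\boldsymbol{f}}_{i+\frac12} = 2\boldsymbol{f}(\boldsymbol{u})\sum_{k\le i}\sum_{l>i}Q_{kl}$. I need to show $\sum_{k\le i,\,l>i} Q_{kl} = \tfrac12$. Let $\mathbf{1}_{\le i}$ be the indicator vector of $\{1,\dots,i\}$ and $\mathbf{1}_{>i} = \mathbf{1}-\mathbf{1}_{\le i}$. Since $Q\mathbf{1} = PD\mathbf{1} = 0$ by \cref{lem:constants}, we get $\mathbf{1}_{\le i}^TQ\mathbf{1}_{>i} = -\mathbf{1}_{\le i}^TQ\mathbf{1}_{\le i}$. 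By (iii), this equals $-\tfrac12\mathbf{1}_{\le i}^T R^TBR\,\mathbf{1}_{\le i}$. For $1\le i\le N-1$, the vector $\mathbf{1}_{\le i}$ contains node $1$ but not node $N$, so the quadratic form is $-1$ and the result is $\tfrac12$, as required.

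The main obstacle is the index bookkeeping in the set difference of the two double sums. I expect it to be routine once it is written out carefully with the skew-symmetry of $Q$ off the boundary corners.
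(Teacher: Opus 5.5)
Your proposal is correct and follows the paper's proof essentially step for step: the same cancellation of the common block $\{k\le i-1,\ l\ge i+1\}$, the off-diagonal skew-symmetry $Q_{ki}=-Q_{ik}$ combined with the symmetry of $\boldsymbol{f}^{\mathrm{vol}}$, the vanishing of $Q_{ii}$ at interior nodes, and, for consistency, the row-sum identity $Q\mathbf{1}=0$ followed by symmetrization via the SBP property. Writing the last step as the quadratic form $-\tfrac12\,\mathbf{1}_{\le i}^T R^T B R\,\mathbf{1}_{\le i}$ is only a compact restatement of the paper's index computation.
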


\begin{proof}
	Since $P$ is diagonal, $Q_{ij} = P_{ii} D_{ij}$, so that
	\begin{equation}\label{eq:tel_vol}
		P_{ii}\, \vol_i = \sum_{j=1}^{N} 2 Q_{ij}\, \boldsymbol{f}^{\mathrm{vol}}(\boldsymbol{u}_i, \boldsymbol{u}_j) .
	\end{equation}
	To evaluate the right-hand side of \cref{eq:telescoping}, we write the two subcell fluxes from \cref{eq:subcell_flux} as double sums over node pairs,
	\begin{equation}\label{eq:tel_two}
		\bar{\boldsymbol{f}}_{i+\frac{1}{2}}
			= \sum_{k=1}^{i} \sum_{l=i+1}^{N} 2 Q_{kl}\, \boldsymbol{f}^{\mathrm{vol}}(\boldsymbol{u}_k, \boldsymbol{u}_l), \quad
		\bar{\boldsymbol{f}}_{i-\frac{1}{2}}
			= \sum_{k=1}^{i-1} \sum_{l=i}^{N} 2 Q_{kl}\, \boldsymbol{f}^{\mathrm{vol}}(\boldsymbol{u}_k, \boldsymbol{u}_l).
	\end{equation}
	The two sums share all terms with $k \le i-1$ and $l \ge i+1$, which therefore cancel in the difference.
	The only surviving terms of $\bar{\boldsymbol{f}}_{i+\frac{1}{2}}$ are those in the row $k = i$ (with $l > i$), and the only surviving terms of $\bar{\boldsymbol{f}}_{i-\frac{1}{2}}$ are those in the column $l = i$ (with $k < i$).
	Hence,
	\begin{equation}\label{eq:tel_diff}
		\bar{\boldsymbol{f}}_{i+\frac{1}{2}} - \bar{\boldsymbol{f}}_{i-\frac{1}{2}}
			= \sum_{l > i} 2 Q_{il}\, \boldsymbol{f}^{\mathrm{vol}}(\boldsymbol{u}_i, \boldsymbol{u}_l)
			- \sum_{k < i} 2 Q_{ki}\, \boldsymbol{f}^{\mathrm{vol}}(\boldsymbol{u}_k, \boldsymbol{u}_i) .
	\end{equation}
	Now recall the off-diagonal identity $Q_{ki} = -Q_{ik}$ for $k \neq i$, which holds because $(Q + Q^T)_{ki} = (R^T B R)_{ki} = 0$ away from the boundary.
	In combination with the symmetry $\boldsymbol{f}^{\mathrm{vol}}(\boldsymbol{u}_k, \boldsymbol{u}_i) = \boldsymbol{f}^{\mathrm{vol}}(\boldsymbol{u}_i, \boldsymbol{u}_k)$, we rewrite \cref{eq:tel_diff} as
	\begin{equation}\label{eq:tel_sym}
		\bar{\boldsymbol{f}}_{i+\frac{1}{2}} - \bar{\boldsymbol{f}}_{i-\frac{1}{2}}
			= \sum_{l > i} 2 Q_{il}\, \boldsymbol{f}^{\mathrm{vol}}(\boldsymbol{u}_i, \boldsymbol{u}_l)
			+ \sum_{k < i} 2 Q_{ik}\, \boldsymbol{f}^{\mathrm{vol}}(\boldsymbol{u}_i, \boldsymbol{u}_k)
			= \sum_{j \neq i} 2 Q_{ij}\, \boldsymbol{f}^{\mathrm{vol}}(\boldsymbol{u}_i, \boldsymbol{u}_j) .
	\end{equation}
	The missing diagonal term vanishes, $2 Q_{ii}\, \boldsymbol{f}^{\mathrm{vol}}(\boldsymbol{u}_i, \boldsymbol{u}_i) = 0$, because $Q_{ii} = \tfrac{1}{2}(R^T B R)_{ii} = 0$ at the interior node $i$.
	Adding it to \cref{eq:tel_sym} and comparing with \cref{eq:tel_vol} yields \cref{eq:telescoping}.
	Finally, for a constant state $\boldsymbol{u}_1 = \dots = \boldsymbol{u}_N = \boldsymbol{u}$, consistency gives $\boldsymbol{f}^{\mathrm{vol}}(\boldsymbol{u}, \boldsymbol{u}) = \boldsymbol{f}(\boldsymbol{u})$, so that the double sum in \cref{eq:subcell_flux} factorizes:
	\begin{equation}\label{eq:tel_const}
		\bar{\boldsymbol{f}}_{i+\frac{1}{2}} = \sum_{k=1}^{i} \sum_{l=i+1}^{N} 2 Q_{kl}\, \boldsymbol{f}^{\mathrm{vol}}(\boldsymbol{u}, \boldsymbol{u}) = 2 \boldsymbol{f}(\boldsymbol{u}) \sum_{k \le i}\, \sum_{l > i} Q_{kl} .
	\end{equation}
	It thus remains to show that $\sum_{k \le i}\sum_{l > i} Q_{kl} = \tfrac{1}{2}$.
	Because $1 \in \mathcal{F}$, we have $Q\mathbf{1} = P D \mathbf{1} = 0$ (\cref{lem:constants}) and every row of $Q$ sums to zero.
	Hence $\sum_{l > i} Q_{kl} = -\sum_{l \le i} Q_{kl}$ for each $k$.
	Summing this over $k \le i$, symmetrizing the resulting double sum, and inserting $Q + Q^T = R^T B R$ gives
	\begin{equation}\label{eq:tel_sumQ}
		\begin{aligned}
			\sum_{k \le i} \sum_{l > i} Q_{kl}
				= - \sum_{k \le i} \sum_{l \le i} Q_{kl}
				= - \frac{1}{2} \sum_{k, l \le i} \big( Q_{kl} + Q_{lk} \big)
				= - \frac{1}{2} \sum_{k, l \le i} (R^T B R)_{kl}
				= - \frac{1}{2} (R^T B R)_{11}
				= \frac{1}{2} .
		\end{aligned}
	\end{equation}
	Here, the last two equalities use that, for the interior node $i \le N-1$, the index block $\{1, \dots, i\}^2$ contains the left boundary node $1$ but not the right boundary node $N$; since $R^T B R = \diag(-1, 0, \dots, 0, 1)$, its only nonzero entry in this block is $(R^T B R)_{11} = -1$.
	Substituting $\sum_{k \le i}\sum_{l > i} Q_{kl} = \tfrac{1}{2}$ into \cref{eq:tel_const} yields $\bar{\boldsymbol{f}}_{i+\frac{1}{2}} = 2 \boldsymbol{f}(\boldsymbol{u}) \cdot \tfrac{1}{2} = \boldsymbol{f}(\boldsymbol{u})$, as claimed.
\end{proof}

We are now in a position to show \cref{cor:conservation}.

\begin{proof}[Proof of \cref{cor:conservation}]
	Since $1 \in \mathcal{F}$, the constant function is reproduced at the boundary, so $R\mathbf{1} = [1, 1]^T$.
	Multiplying \cref{eq:fd_scheme} by $\mathbf{1}^T P$ and using $\mathbf{1}^T R^T = (R\mathbf{1})^T$ gives
	\begin{equation}
		\frac{\d}{\d t} \mathbf{1}^T P \mathbf{u}
			= -\mathbf{1}^T P\, \vol + (R\mathbf{1})^T B \left( R\mathbf{f} - \mathbf{f}^{\mathrm{surf}} \right) .
	\end{equation}
	For the volume term, $Q_{ij} = P_{ii} D_{ij}$ gives $\mathbf{1}^T P\, \vol = \sum_{i,j} 2 Q_{ij}\, \boldsymbol{f}^{\mathrm{vol}}(\boldsymbol{u}_i, \boldsymbol{u}_j)$.
	Symmetrizing in $(i,j)$ with the symmetry $\boldsymbol{f}^{\mathrm{vol}}(\boldsymbol{u}_i, \boldsymbol{u}_j) = \boldsymbol{f}^{\mathrm{vol}}(\boldsymbol{u}_j, \boldsymbol{u}_i)$ and using $Q + Q^T = R^T B R$ then yields the telescoping identity
	\begin{equation}
		\mathbf{1}^T P\, \vol
			= \sum_{i,j} (Q_{ij} + Q_{ji})\, \boldsymbol{f}^{\mathrm{vol}}(\boldsymbol{u}_i, \boldsymbol{u}_j)
			= \sum_{i,j} (R^T B R)_{ij}\, \boldsymbol{f}^{\mathrm{vol}}(\boldsymbol{u}_i, \boldsymbol{u}_j)
			= \boldsymbol{f}(\boldsymbol{u}_N) - \boldsymbol{f}(\boldsymbol{u}_1).
	\end{equation}
	The last equality uses $R^T B R = \diag(-1, 0, \dots, 0, 1)$ and the consistency $\boldsymbol{f}^{\mathrm{vol}}(\boldsymbol{u}_n, \boldsymbol{u}_n) = \boldsymbol{f}(\boldsymbol{u}_n)$.
	By the same diagonal structure, $(R\mathbf{1})^T B (R \mathbf{f}) = \boldsymbol{f}(\boldsymbol{u}_N) - \boldsymbol{f}(\boldsymbol{u}_1)$ and $(R\mathbf{1})^T B \mathbf{f}^{\mathrm{surf}} = \boldsymbol{f}^{\mathrm{surf}}_R - \boldsymbol{f}^{\mathrm{surf}}_L$.
	Substituting these three identities, we obtain
	\begin{equation}
		\frac{\d}{\d t} \mathbf{1}^T P \mathbf{u}
			= -\left( \boldsymbol{f}(\boldsymbol{u}_N) - \boldsymbol{f}(\boldsymbol{u}_1) \right)
			+ \left( \boldsymbol{f}(\boldsymbol{u}_N) - \boldsymbol{f}(\boldsymbol{u}_1) \right)
			- \left( \boldsymbol{f}^{\mathrm{surf}}_R - \boldsymbol{f}^{\mathrm{surf}}_L \right)
			= \boldsymbol{f}^{\mathrm{surf}}_L - \boldsymbol{f}^{\mathrm{surf}}_R ,
	\end{equation}
	which yields \cref{eq:conservation} and thus the assertion.
\end{proof}
\section{Proof of \cref{thm:entropy_cons}}
\label{app:entropy}

We first show a key algebraic identity: when contracted with the entropy variables, the volume term telescopes to the entropy flux.
This will allow us to subsequently prove \cref{thm:entropy_cons}.

\begin{lemma}[Volume entropy identity]\label{lem:vol_entropy}
	Let $D = P^{-1}Q$ be a diagonal-norm $\mathcal{F}$-exact FSBP operator with $1 \in \mathcal{F}$, let $\boldsymbol{f}^{\mathrm{vol}}$ be an EC flux satisfying \cref{eq:EC_condition}, and let $\mathbf{w}$ have rows $\boldsymbol{w}_i = \boldsymbol{w}(\boldsymbol{u}_i)$.
	Then
	\begin{equation}\label{eq:vol_entropy}
		\mathbf{w}^T P\, \vol = F(\boldsymbol{u}_N) - F(\boldsymbol{u}_1) ,
	\end{equation}
	where $\mathbf{w}^T P\, \vol = \sum_i P_{ii}\, \boldsymbol{w}_i^T \vol_i$.
\end{lemma}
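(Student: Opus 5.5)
We follow the same symmetrization strategy as in the proof of \cref{cor:conservation}, now contracting with the entropy variables instead of $\mathbf{1}$. Since $P$ is diagonal, $Q_{ij} = P_{ii} D_{ij}$, and therefore
\begin{equation*}
	\mathbf{w}^T P\, \vol = \sum_{i,j} 2 Q_{ij}\, \boldsymbol{w}_i^T \boldsymbol{f}^{\mathrm{vol}}(\boldsymbol{u}_i, \boldsymbol{u}_j).
\end{equation*}
The plan is to split $2Q_{ij} = (Q_{ij} - Q_{ji}) + (Q_{ij} + Q_{ji})$ into a skew part and a symmetric part. By the SBP property (iii), the symmetric part equals $(R^T B R)_{ij}$.

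For the symmetric part we use $R^T B R = \diag(-1,0,\dots,0,1)$ and consistency, $\boldsymbol{f}^{\mathrm{vol}}(\boldsymbol{u}_n,\boldsymbol{u}_n) = \boldsymbol{f}(\boldsymbol{u}_n)$. This reduces it to $\boldsymbol{w}_N^T \boldsymbol{f}(\boldsymbol{u}_N) - \boldsymbol{w}_1^T \boldsymbol{f}(\boldsymbol{u}_1)$.

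For the skew part we relabel $i \leftrightarrow j$ in the $Q_{ji}$ term and use the symmetry of $\boldsymbol{f}^{\mathrm{vol}}$. This gives
\begin{equation*}
	\sum_{i,j} (Q_{ij} - Q_{ji})\, \boldsymbol{w}_i^T \boldsymbol{f}^{\mathrm{vol}}_{ij} = \sum_{i,j} Q_{ij}\, (\boldsymbol{w}_i - \boldsymbol{w}_j)^T \boldsymbol{f}^{\mathrm{vol}}_{ij},
\end{equation*}
where $\boldsymbol{f}^{\mathrm{vol}}_{ij} := \boldsymbol{f}^{\mathrm{vol}}(\boldsymbol{u}_i,\boldsymbol{u}_j)$. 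The EC condition \cref{eq:EC_condition} turns each summand into $Q_{ij}(\psi_i - \psi_j)$.

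The key step, and the only place where $1 \in \mathcal{F}$ enters, is evaluating $\sum_{i,j} Q_{ij}(\psi_i - \psi_j)$. The first piece is $\sum_i \psi_i (Q\mathbf{1})_i$, which vanishes because $Q\mathbf{1} = P D \mathbf{1} = 0$ by \cref{lem:constants}. The second piece is $-\sum_j \psi_j (Q^T \mathbf{1})_j$. Since $Q^T\mathbf{1} = (R^T B R - Q)\mathbf{1} = R^T B R\,\mathbf{1} = [-1,0,\dots,0,1]^T$, this piece equals $\psi_1 - \psi_N$.

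Adding the two parts gives
\begin{equation*}
	\mathbf{w}^T P\, \vol = \big(\boldsymbol{w}_N^T \boldsymbol{f}(\boldsymbol{u}_N) - \psi_N\big) - \big(\boldsymbol{w}_1^T \boldsymbol{f}(\boldsymbol{u}_1) - \psi_1\big).
\end{equation*}
By the definition \cref{eq:flux_potential} of the flux potential, $\boldsymbol{w}^T\boldsymbol{f} - \psi = F$, so the right-hand side is $F(\boldsymbol{u}_N) - F(\boldsymbol{u}_1)$, which is \cref{eq:vol_entropy}.

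I expect the main care to be needed in the index bookkeeping of the skew/symmetric split. One must make sure the factor $2$ is fully absorbed, and that the diagonal terms $i=j$ cause no trouble in the skew part; they do not, since their factor $\boldsymbol{w}_i - \boldsymbol{w}_i$ vanishes.
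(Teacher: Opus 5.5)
Your proposal is correct and matches the paper's proof step for step. It uses the same split $2Q_{ij} = (R^TBR)_{ij} + (Q_{ij} - Q_{ji})$ and the same relabeling with flux symmetry and the EC condition to reach $\sum_{i,j} Q_{ij}(\psi_i - \psi_j)$, which is then evaluated via $Q\mathbf{1} = 0$ and $Q^T\mathbf{1} = R^TBR\,\mathbf{1}$ (the paper writes the transpose, $\mathbf{1}^T Q = \mathbf{1}^T R^T B R$).
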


\begin{proof}
	Using the diagonal norm and $Q_{ij} = P_{ii} D_{ij}$, we have
	\begin{equation}
		\mathbf{w}^T P\, \vol
			= \sum_{i,j} 2\, Q_{ij}\, \boldsymbol{w}_i^T \boldsymbol{f}^{\mathrm{vol}}(\boldsymbol{u}_i, \boldsymbol{u}_j) .
	\end{equation}
	Substituting $2 Q_{ij} = (R^T B R)_{ij} + (Q_{ij} - Q_{ji})$, which follows from the SBP property $Q + Q^T = R^T B R$, splits the sum into three parts:
	\begin{equation}
		\mathbf{w}^T P\, \vol
			= \sum_{i,j} (R^T B R)_{ij}\, \boldsymbol{w}_i^T \boldsymbol{f}^{\mathrm{vol}}(\boldsymbol{u}_i, \boldsymbol{u}_j)
			+ \sum_{i,j} Q_{ij}\, \boldsymbol{w}_i^T \boldsymbol{f}^{\mathrm{vol}}(\boldsymbol{u}_i, \boldsymbol{u}_j)
			- \sum_{i,j} Q_{ji}\, \boldsymbol{w}_i^T \boldsymbol{f}^{\mathrm{vol}}(\boldsymbol{u}_i, \boldsymbol{u}_j) .
	\end{equation}
	The first part is, by $R^T B R = \diag(-1, 0, \dots, 0, 1)$ and consistency,
	\begin{equation}
		\sum_{i,j} (R^T B R)_{ij}\, \boldsymbol{w}_i^T \boldsymbol{f}^{\mathrm{vol}}(\boldsymbol{u}_i, \boldsymbol{u}_j)
			= \boldsymbol{w}_N^T \boldsymbol{f}(\boldsymbol{u}_N) - \boldsymbol{w}_1^T \boldsymbol{f}(\boldsymbol{u}_1).
	\end{equation}
	We next relabel $i \leftrightarrow j$ and use the symmetry $\boldsymbol{f}^{\mathrm{vol}}(\boldsymbol{u}_j, \boldsymbol{u}_i) = \boldsymbol{f}^{\mathrm{vol}}(\boldsymbol{u}_i, \boldsymbol{u}_j)$, so that the second and third parts combine into
	\begin{equation}
		\sum_{i,j} Q_{ij} (\boldsymbol{w}_i - \boldsymbol{w}_j)^T \boldsymbol{f}^{\mathrm{vol}}(\boldsymbol{u}_i, \boldsymbol{u}_j) = \sum_{i,j} Q_{ij} (\psi_i - \psi_j) = \psi^T Q \mathbf{1} - \mathbf{1}^T Q \psi,
	\end{equation}
	where we used the EC condition \cref{eq:EC_condition} and collected the nodal potentials into $\psi = [\psi_1, \dots, \psi_N]^T$.
	By \cref{lem:constants}, $Q\mathbf{1} = P D \mathbf{1} = 0$, so $\psi^T Q\mathbf{1} = 0$.
	At the same time, $\mathbf{1}^T Q = \mathbf{1}^T (R^T B R - Q^T) = \mathbf{1}^T R^T B R$ gives $\mathbf{1}^T Q \psi = (R\mathbf{1})^T B (R\psi) = \psi_N - \psi_1$.
	Collecting terms and using $\psi_i = \boldsymbol{w}_i^T \boldsymbol{f}(\boldsymbol{u}_i) - F(\boldsymbol{u}_i)$ yields
	\begin{equation}
		\mathbf{w}^T P\, \vol
			= \left( \boldsymbol{w}_N^T \boldsymbol{f}(\boldsymbol{u}_N) - \psi_N \right)
				- \left( \boldsymbol{w}_1^T \boldsymbol{f}(\boldsymbol{u}_1) - \psi_1 \right)
			= F(\boldsymbol{u}_N) - F(\boldsymbol{u}_1)
	\end{equation}
	and thus the assertion.
\end{proof}

\Cref{lem:vol_entropy} is the discrete counterpart of $\int_{x_L}^{x_R} \boldsymbol{w}^T \partial_x \boldsymbol{f} \intd x = [F]_{x_L}^{x_R}$.
We can now show \cref{thm:entropy_cons}.

\begin{proof}[Proof of \cref{thm:entropy_cons}]
	Contracting \cref{eq:fd_scheme} with $P_{ii}\, \boldsymbol{w}_i^T$ and summing over $i$ gives
	\begin{equation}\label{eq:entropy_cons_proof1}
		\frac{\d}{\d t} \mathbf{1}^T P\, S(\mathbf{u})
			+ \mathbf{w}^T P\, \vol
			+ \sum_{i} \boldsymbol{w}_i^T \big( R^T B (\mathbf{f}^{\mathrm{surf}} - R\mathbf{f}) \big)_i
		= 0,
	\end{equation}
	where we used $\boldsymbol{w}_i^T \frac{\d \boldsymbol{u}_i}{\d t} = \frac{\d}{\d t} S(\boldsymbol{u}_i)$ and $\mathbf{w}^T P\, \vol = \sum_i P_{ii}\, \boldsymbol{w}_i^T \vol_i$.
	Since $R^T B (\mathbf{f}^{\mathrm{surf}} - R\mathbf{f})$ is supported at the two boundary nodes, the last sum reduces to
	\begin{equation}\label{eq:entropy_cons_proof2}
		\sum_{i} \boldsymbol{w}_i^T \big( R^T B (\mathbf{f}^{\mathrm{surf}} - R\mathbf{f}) \big)_i
			= \boldsymbol{w}_N^T \big( \boldsymbol{f}^{\mathrm{surf}}_R - \boldsymbol{f}(\boldsymbol{u}_N) \big)
			- \boldsymbol{w}_1^T \big( \boldsymbol{f}^{\mathrm{surf}}_L - \boldsymbol{f}(\boldsymbol{u}_1) \big).
	\end{equation}
	Inserting \cref{eq:vol_entropy,eq:entropy_cons_proof2} into \cref{eq:entropy_cons_proof1} and grouping the boundary contributions with $\psi_i = \boldsymbol{w}_i^T \boldsymbol{f}(\boldsymbol{u}_i) - F(\boldsymbol{u}_i)$ yields \cref{eq:entropy_balance} and thus the assertion.
\end{proof}

\bibliographystyle{siamplain}
\bibliography{references}

\end{document}